\documentclass[english,12pt,oneside]{amsproc}
\usepackage[english]{babel}
\usepackage{a4wide}
\usepackage{amsthm}
\usepackage{graphics}
\usepackage{amsfonts, amssymb, amscd, amsmath}
\usepackage{latexsym}
\usepackage[matrix,arrow,curve]{xy}
\usepackage{mathabx}
\usepackage{color}
\usepackage{pbox}
\usepackage{tikz}
\usetikzlibrary{matrix,decorations.pathreplacing,positioning}
\usepackage{hyperref}

 \DeclareMathOperator{\id}{id}

\DeclareMathOperator{\const}{const}
\DeclareMathOperator{\Hom}{Hom} 
 
\DeclareMathOperator{\diag}{diag}\DeclareMathOperator{\conv}{conv}
\DeclareMathOperator{\Pe}{Pe} \DeclareMathOperator{\Sq}{Sq}
 \DeclareMathOperator{\Fl}{Fl}
 
\DeclareMathOperator{\St}{St} \DeclareMathOperator{\link}{link}
\DeclareMathOperator{\Hilb}{Hilb}
\DeclareMathOperator{\Spec}{Spec}

\newcommand{\Zo}{\mathbb{Z}}
\newcommand{\Ro}{\mathbb{R}}

\newcommand{\Co}{\mathbb{C}}
\newcommand{\Qo}{\mathbb{Q}}

\newcommand{\Zt}{\Zo_2}

\newcommand{\br}{\widetilde{\beta}}

\newcommand{\ca}[1]{\mathcal{#1}}
\newcommand{\Hr}{\widetilde{H}}
\newcommand{\dd}{\partial}
\newcommand{\Ca}{\mathcal{C}}
\newcommand{\F}{\mathcal{F}}
\newcommand{\B}{\mathcal{B}}
\newcommand{\I}{\mathbb{I}}

\newcommand{\Xs}{X^6_{\St}}
\newcommand{\Qs}{Q^3_{\St}}

\newcommand{\Nt}{\mathcal{N}}

\newcounter{stmcounter}[section]

\numberwithin{equation}{section}




\theoremstyle{plain}
\newtheorem{cor}[stmcounter]{Corollary}

\newtheorem{thm}[stmcounter]{Theorem}

\newtheorem{prop}[stmcounter]{Proposition}
\newtheorem{lem}[stmcounter]{Lemma}
\newtheorem{probl}[stmcounter]{Problem}

\theoremstyle{definition}
\newtheorem{defin}[stmcounter]{Definition}

\theoremstyle{remark}
\newtheorem{ex}[stmcounter]{Example}
\newtheorem{rem}[stmcounter]{Remark}
\newtheorem{con}[stmcounter]{Construction}

\begin{document}

\title{Manifolds of isospectral arrow matrices}

\author{Anton Ayzenberg, Victor Buchstaber}
\address{Faculty of computer science, Higher School of Economics}
\email{ayzenberga@gmail.com}
\address{V.A. Steklov Mathematical Institute, RAS, Moscow, Russia}
\email{buchstab@mi.ras.ru}

\date{\today}
\thanks{The publication was prepared within the framework of the Academic
Fund Program at the National Research University Higher School of
Economics (HSE) in 2018-2019 (grant  N 18-01-0030) and by the
Russian Academic Excellence Project ``5-100''.}
\subjclass[2010]{Primary 52B11, 15A42, 57R19, 57R91; Secondary
05E45, 52B70, 15B57, 52C45, 55N91, 57S25, 20Bxx, 53D20}
\keywords{matrix spectrum, Hermitian matrix, sparse matrix, torus
action, permutation action, moment map, fundamental domain,
manifold with corners, codimension two face cuts}

\begin{abstract}
An arrow matrix is a matrix with zeroes outside the main diagonal,
first row, and first column. We consider the space
$M_{\St_n,\lambda}$ of Hermitian arrow $(n+1)\times
(n+1)$-matrices with fixed simple spectrum $\lambda$. We prove
this space to be a smooth $2n$-manifold, and its smooth structure
is independent on the spectrum. Next, this manifold carries the
locally standard torus action: we describe the topology and
combinatorics of its orbit space. If $n\geqslant 3$, the orbit
space $M_{\St_n,\lambda}/T^n$ is not a polytope, hence
$M_{\St_n,\lambda}$ is not a quasitoric manifold. However, there
is a natural permutation action on $M_{\St_n,\lambda}$ which
induces the combined action of a semidirect product
$T^n\rtimes\Sigma_n$. The orbit space of this large action is a
simple polytope $\B^n$. The structure of this polytope is
described in the paper.

In case $n=3$, the space $M_{\St_3,\lambda}/T^3$ is a solid torus
with boundary subdivided into hexagons in a regular way. This
description allows to compute the cohomology ring and equivariant
cohomology ring of the 6-dimensional manifold $M_{\St_3,\lambda}$
using the general theory developed by the first author. This
theory is also applied to a certain $6$-dimensional manifold
called the twin of $M_{\St_3,\lambda}$. The twin carries a
half-dimensional torus action and has nontrivial tangent and
normal bundles.
\end{abstract}

\maketitle


\section{Introduction}\label{secIntro}

Spaces of isospectral Hermitian or symmetric matrices lie in the
focus of several areas of mathematics, including symplectic
geometry, representation theory, toric topology, and applied
mathematics.

Let $M_{n+1}$ be the space of all Hermitian matrices of size $n+1$
and let $M_\lambda\subset M_{n+1}$ denote the subspace of all
Hermitian matrices with the fixed simple spectrum
$\lambda=\{\lambda_0,\lambda_1,\ldots,\lambda_n\}$ (we assume
$\lambda_0<\lambda_1<\cdots<\lambda_{n}$). The unitary group
$U(n+1)$ acts on $M_{n+1}$ by conjugation. Multiplying Hermitian
matrices by $\sqrt{-1}$, we get skew Hermitian matrices, hence the
action can be identified with the adjoint action of $U(n+1)$ on
its tangent Lie algebra. For a simple spectrum $\lambda$ the
subset $M_\lambda$ is the principal orbit of this action. The
manifold $M_\lambda$ is diffeomorphic to the variety of full
complex flags $\Fl_{n+1}=U(n+1)/T^{n+1}$. Here
\[
T^{n+1}=\left\{D=\diag(t_0,\ldots,t_{n})\mid t_i\in \Co,
|t_i|=1\right\}
\]
is the maximal compact torus consisting of diagonal unitary
matrices. The torus acts on both matrix spaces $M_{n+1}$ and
$M_\lambda$ by matrix conjugation $A\mapsto DAD^{-1}$. In
coordinate form we have
\begin{equation}\label{eqCoordMain}
(a_{ij})_{\substack{i=0,\ldots,n\\j=0,\ldots,n}}\mapsto
(t_it_j^{-1}a_{ij})_{\substack{i=0,\ldots,n\\j=0,\ldots,n}}.
\end{equation}
It is reasonable to look for subspaces in the flag manifold
$M_\lambda\cong \Fl_{n+1}$, which are preserved by the torus
action. The formula \eqref{eqCoordMain} implies that the torus
action preserves zeroes at given positions. Hence, one can study
the spaces of Hermitian matrices, with the given spectrum and
zeroes at prescribed positions.

The classical example is the space of $M_{\I_{n},\lambda}$ of
isospectral tridiagonal Hermitian matrices. This space was studied
in \cite{Tomei,BFR,DJ}. Tomei \cite{Tomei} considered the real
analogue of this space: he proved that this space is a smooth
manifold, and its smooth type is independent of the spectrum.
Bloch--Flaschka--Ratiu \cite{BFR} studied the Hermitian case and
demonstrated its connection with the toric variety of type $A_n$.
General theory developed in the seminal work of Davis and
Januszkiewicz \cite{DJ} allows to describe cohomology ring and
$T^n$-equivariant cohomology ring of $M_{\I_{n},\lambda}$.

Instead of tridiagonal matrices one can consider staircase
Hermitian matrices (also known as generalized Hessenberg
matrices), see \cite{Nan,dMP}. The nonzero elements of such
matrices are allowed only in the vicinity of the diagonal which is
encoded by the so called Hessenberg function. The spaces of
Hermitian staircase matrices can be studied similarly to
tridiagonal case: the properties of generalized Toda flow can be
used to prove the smoothness of these spaces. We collected the
results on such ``matrix Hessenberg manifolds'' in~\cite{ABhess}.

Another way to generalize tridiagonal matrices is to allow two
additional non-zero entries at top right and bottom left corner of
the matrix. Such matrices are called periodic tridiagonal
matrices. They appear in the study of discrete Schr\"{o}dinger
operator in mathematical physics (see \cite{VanM,Krich}). The
study of the isospectral space of such matrices is done in the
forthcoming paper \cite{AyzMatr}.

In this paper we study the isospectral space $M_{\St_n,\lambda}$
of matrices which have zeroes outside the diagonal, first row, and
first column. Matrices of this form will be called \emph{arrow
matrices}. We are indebted to Tadeusz Januszkiewicz \cite{Jan} for
telling us about this wonderful object. The forthcoming paper by
Januszkiewicz and Gal will reveal homological and symplectic
features of this isospectral space in general.

In this paper we prove that $M_{\St_n,\lambda}$ is smooth and its
smooth type is independent of $\lambda$, see Section
\ref{secArrowMatricesOrbits}. The action of a torus $T=T^n$ on
$M_{\St_n,\lambda}$ is locally standard, so the orbit space
$Q_n=M_{\St_n,\lambda}/T$ is a manifold with corners. In Section
\ref{secArrowMatricesOrbits} we describe the topology of the orbit
space. We introduce a cubical complex $\Sq_n$ which is the union
of cubical faces of $n$-dimensional permutohedron, and prove that
the orbit space $Q_n$ is homotopy equivalent to $\Sq_{n-1}$. It
follows that for $n\geqslant 3$, the orbit space is not a simple
polytope. The combinatorial face structure of $Q_n$ is described
in Section \ref{secTreeMatricesCombinatorics} with a general
notion of a cluster-permutohedron. The family of
cluster-permutohedra contains two known examples: a permutohedron
and a cyclopermutohedron of Gaiane Panina, and provide interesting
examples of partially ordered sets, which, as far as we know, have
not been considered in combinatorial geometry yet.

In general, there is a natural permutation action of the symmetric
group $\Sigma_n$ on the manifold $M_{\St_n,\lambda}$ as well as on
the orbit space $Q_n$. We show that the fundamental domain of the
$\Sigma_n$-action on $Q_n$ is a simple polytope; in Section
\ref{secArrowMatricesBlock} we describe this polytope. This
description allows to reconstruct $Q_n$ by stacking together $n!$
copies of this polytope. In the future we hope this description of
the orbit space $Q_n$ will allow to construct effective
diagonalization algorithms for arrow-shaped matrices.

We are especially interested in arrow matrices of size $4\times
4$, that is the case $n=3$. In this case the orbit space
$Q_3=M_{\St_3,\lambda}$ is a solid torus whose boundary is
subdivided into hexagons in a regular way. We knew this fact from
Tadeusz Januszkiewicz \cite{Jan}. The cohomology and equivariant
cohomology rings of the space $M_{\St_n,\lambda}$ itself can be
computed with the theory developed by the first author in
\cite{Ay0,Ay1,Ay2,Ay3,AMPZ}. This theory, in general, allows to
describe cohomology and equivariant cohomology rings of manifolds
with locally standard torus action, whose orbit spaces have only
acyclic proper faces. Since every facet of $Q_3$ is a hexagon, we
are in position to apply this theory to $M_{\St_3,\lambda}$. In
Section \ref{secArrowMatricesTopology} we recall the notions of
Stanley--Reisner ring, $h$-, $h'$-, and $h''$-numbers of
simplicial complexes, Novik--Swartz theory, and related
topological results of \cite{DJ,MasPan}. Theorem
\ref{thmM3cohomology}, based on these results, describes the
homological structure of the manifold $M_{\St_3,\lambda}$.

In the last Section \ref{secTwin} we introduce the manifold $X_n$
whose properties are similar to that of $M_{\St_3,\lambda}$. This
manifold also carries a half-dimensional torus action and its
orbit space is isomorphic to $Q_n$. However, the manifold $X_n$ is
more interesting from the topological point of view. In case $n=3$
we describe the cohomology ring and show that the first Pontryagin
class of $X_3$ is nonzero.

\section{Spaces of sparse isospectral matrices}\label{secGeneral}

The action of $T^{n+1}$ on the isospectral space $M_\lambda$ is
noneffective, since the scalar matrices act trivially. Hence we
have an effective action of $T=T^n\cong T^{n+1}/\Delta(T^1)$.
Fixed points of the torus action on $M_{\lambda}$ are the diagonal
matrices with the spectrum $\lambda$, i.e. the matrices of the
form
$\diag(\lambda_{\sigma(0)},\lambda_{\sigma(1)},\ldots,\lambda_{\sigma(n)})$
for all possible permutations $\sigma\in \Sigma_{n+1}$.

The action is hamiltonian. Indeed, $M_\lambda$ can be identified
with the orbit of (co)adjoint action of $U(n+1)$ on its
(co)tangent algebra. This orbit possess Kostant--Kirillov
symplectic form, and the action of $U(n+1)$ (hence $T^{n+1}$) on
this orbit is hamiltonian. The momentum map for the torus action
is given by
\[
\mu\colon M_{\lambda} \to \Ro^{n+1},\quad A=(a_{i,j})\mapsto
(a_{0,0},a_{1,1},\ldots,a_{n,n})
\]
(the image of this map lies in the hyperplane $\{\sum_{i=0}^n
a_{i,i}=\sum\lambda_i=\const\}\cong\Ro^n$).
Atiyah--Guillemin--Sternberg theorem tells that the image of the
moment map is the permutohedron
$\conv\{(\lambda_{\sigma(0)},\lambda_{\sigma(1)},\ldots,\lambda_{\sigma(n)})\mid
\sigma\in\Sigma_{n+1}\}$. We will call it Schur--Horn's
permutohedron, since the description of diagonals of all Hermitian
matrices with the given spectrum is the classical result of Schur
and Horn.

\begin{con}
Let $\Gamma=(V,E)$ be a graph without multiple edges and loops on
the vertex set $V=\{0,1,\ldots,n\}$. Consider the vector subspace
of Hermitian matrices:
\[
M_\Gamma=\{A\in M_{n+1}\mid a_{ij}=0, \mbox{ если } \{i,j\}\notin
E\}.
\]
As noted in the introduction, the torus action preserves the set
$M_\Gamma$ for any $\Gamma$. We set
\[
M_{\Gamma,\lambda}=M_{\Gamma}\cap M_{\lambda}.
\]
The action of $T^n$ can be restricted to $M_{\Gamma,\lambda}$. The
space $M_{\Gamma,\lambda}$ is called the \emph{space of
isospectral sparse matrices of type} $\Gamma$. We have
\begin{equation}\label{eqDimM} \dim
M_{\Gamma,\lambda}=2|E|
\end{equation}
\end{con}

\begin{ex}
If $\Gamma$ is a complete graph $\{0,\ldots,n\}$, then
$M_{\Gamma,\lambda}=M_\lambda\cong \Fl_{n+1}$.
\end{ex}

\begin{rem}\label{remBlockMatrices}
Without loss of generality, only connected graphs can be
considered. Let $\Gamma_1,\ldots,\Gamma_k$ be the connected
components of a graph $\Gamma$ with the vertex sets
$V_1,\ldots,V_k\subset \{0,\ldots,n\}$ respectively. Let $\Omega$
be the set of all possible partitions of the set
$\{\lambda_0,\ldots,\lambda_n\}$ into disjoint subsets $S_i$ of
cardinalities $|A_i|$, $i=1,\ldots,k$. Then
$M_{\Gamma,\lambda}=\bigsqcup_\Omega \prod_{i=1}^k
M_{\Gamma_i,S_i}$. We discuss the underlying combinatorial
structures in detail in Section
\ref{secTreeMatricesCombinatorics}.
\end{rem}

\begin{probl}
Is it true that for each $\Gamma$ the subspace
$M_{\Gamma,\lambda}$ is a smooth manifold, which smooth type is
independent of $\lambda$?
\end{probl}

\begin{rem}
Kirillov's form can be restricted to $M_{\Gamma,\lambda}$, however
the restriction need not be symplectic even if
$M_{\Gamma,\lambda}$ is smooth. Therefore,
Atiyah--Guillemin--Sternberg theorem is no longer applicable in
the general case. Still there is a map $\mu\colon
M_{\Gamma,\lambda}\to \Ro^{n+1}$ taking the diagonal of the
matrix. This map is constant on each torus orbit, hence there is
an induced map $\tilde{\mu}\colon M_{\Gamma,\lambda}/T^n\to
\Ro^{n+1}$.
\end{rem}

\section{Tree matrices}\label{secTreeMatricesPreliminaries}

Let $\Gamma$ be a tree on the vertex set $\{0,1,\ldots,n\}$. In
this case the elements of $M_\Gamma$ will be called \emph{tree
matrices}. The $2n$-dimensional space $M_{\Gamma,\lambda}$ carries
an effective action of a compact $n$-torus. This fact makes tree
matrices an important object: they produce natural examples of
half-dimensional torus actions. The moment map
\[
\mu\colon M_{\Gamma,\lambda}\to H\subset \Ro^{n+1},\quad H=\{\sum
a_{i,i}=\const\}
\]
induces the map of $n$-dimensional orbit space
$M_{\Gamma,\lambda}/T$ into Schur--Horn permutohedron
$\Pe_\lambda^n\subset H\cong \Ro^n$. All vertices of
$\Pe_\lambda^n$ belong to $\mu(M_{\Gamma,\lambda})$. In the
following it will be convenient to encode tree matrices in terms
of labeled trees.

\begin{defin}
A \emph{labeled tree} $\Delta$ is a triple $\Delta=(\Gamma,a,b)$,
where $\Gamma=(V,E)$ is a tree, $a\colon V\to \Ro$, $b\colon E\to
\Co$.
\end{defin}

In other words, a labeled tree is a tree with a real number $a_i$
assigned to each vertex $i\in V$, and a complex number $b_e$
assigned to each edge $e\in E$. A labeled tree determines the
Hermitian matrix $A_\Delta$ as follows. The diagonal elements of
$A_\Delta$ are $a_i$ at diagonal, and $(i,j)$-th element is zero
if $\{i,j\}$ is not an edge of $\Gamma$; $b_e$ if $i<j$ and
$e=\{i,j\}$; and $\overline{b_e}$ if $j<i$ and $e=\{j,i\}$.

If, moreover, some vertex $k$ is fixed in a labeled tree, we call
it a \emph{rooted labeled tree} with the \emph{root} $k$.

\begin{ex}\label{exTridiagonal}
Let $\I_n$ denote the path graph with the edges
$E=\{\{0,1\},\{1,2\},\ldots,\{n-1,n\}\}$. Then $M_{\I_n,\lambda}$
is the space of tridiagonal isospectral Hermitian matrices. The
classical result (see \cite{Tomei,BFR}) states that
$M_{\I_n,\lambda}$ is a smooth manifold, and its smooth type is
independent of $\lambda$. It follows from the result of Tomei that
the orbit space $M_{\I_n, \lambda}/T^n$ is diffeomorphic to
$n$-dimensional permutohedron as a manifold with corners.

The moment map $\mu\colon \Pe^n\cong M_{\Gamma, \lambda}/T^n\to
\Pe_\lambda^n$ is not the isomorphism of permotohedra. This map
determines the bijection between vertices of permutohedra, however
this map is neither injective nor surjective on the interior of a
permutohedron. In case $n=2$ the image of the moment map is shown
on Fig.\ref{pictTwistedHex} (this fact is proved in Proposition
\ref{propMomentImage} below).
\end{ex}

\begin{figure}[h]
\begin{center}
\includegraphics[scale=0.3]{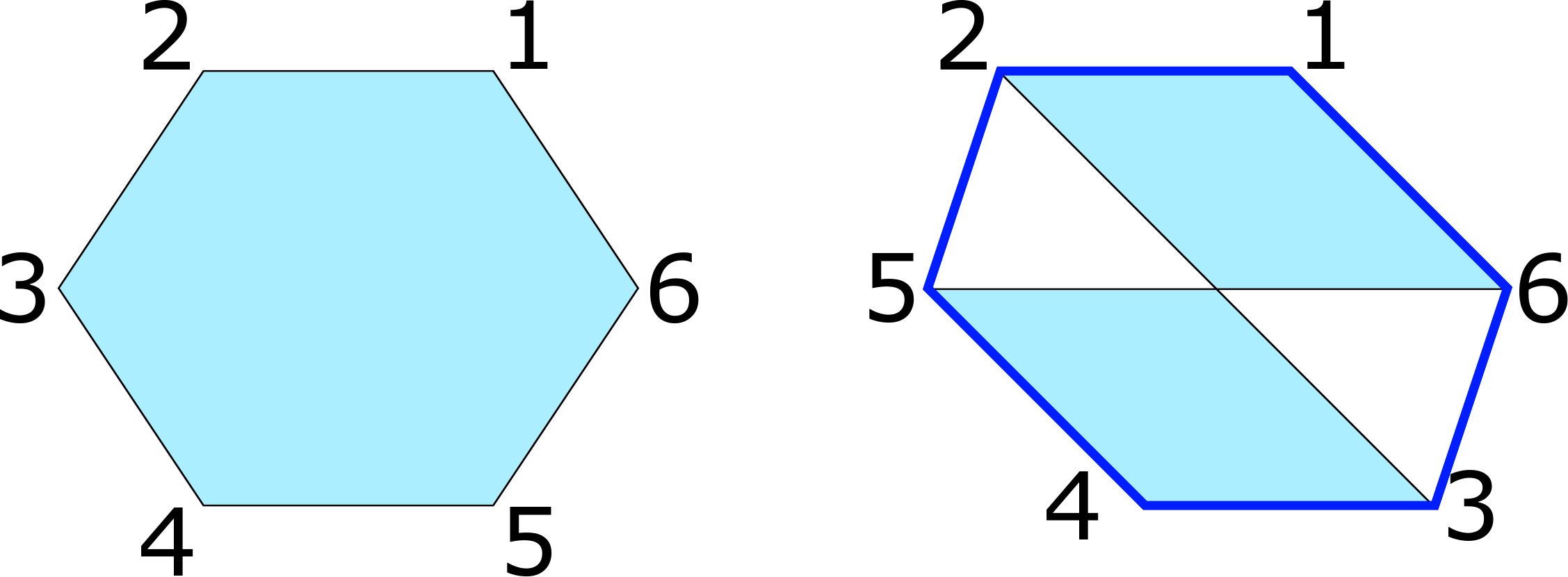}
\end{center}
\caption{Image of the moment map for tridiagonal
$3\times3$-matrices}\label{pictTwistedHex}
\end{figure}


\begin{ex}
Let $\St_n$ denote the star graph with the edge set
$E=\{\{0,1\},\{0,2\},\ldots,\{0,n\}\}$. In this case, matrices
from $M_{\St_n}$ have the form
\begin{equation}\label{eqArrowMatrixView}
A_\Delta=\begin{pmatrix}
a_0 & b_1 & \cdots & b_n\\
\overline{b_1}& a_1 & 0&0\\
\vdots& \vdots & \ddots & \vdots\\
\overline{b_n}&0&\cdots&a_n
\end{pmatrix}
\end{equation}
Such matrices are called \emph{arrow matrices}.
\end{ex}

We formulate several technical statements about general tree
matrices. At first, there is a natural notion of a tree fraction,
which generalizes that continued fractions.

\begin{defin}
Let $\Delta=(\Gamma=(V,E),a,b)$ be a rooted labeled tree with root
$k\in V$. Define the tree fraction $Q(\Delta,k)$ associated with
$(\Delta,k)$ by recursion:
\begin{enumerate}
\item Let $\Delta$ be a labeled tree with root $k$
and at least one more vertex. By deleting $k$ from $\Delta$ the
tree breaks down into $s$ connected components, where each
connected component is a rooted labeled tree $\Delta_i$, whose
root $k_i$ is a descendant of $k$. Let $b_1,\ldots,b_s\in \Co$ be
the labels on the edges of $\Delta$ connecting $k$ with its
descendants. Set
\[
Q(\Delta,k)=a_k-\sum_{i=1}^s\dfrac{|b_i|^2}{Q(\Delta_i,k_i)}.
\]

\item If $\Gamma$ has a single vertex $k$ labeled by
$a_k\in \Ro$, then we set $Q(\Delta,k)=a_k$.
\end{enumerate}
\end{defin}

In particular, for a labeled path graph, rooted at the endpoint,
\[
\begin{tikzpicture}
\draw (0,0)--(4,0);

\filldraw [red] (0,0) circle (2pt);

\draw (0,0.3) node{$a_0$}; \draw (0.5,-0.3) node{$b_1$};

\filldraw [black] (1,0) circle (1pt);

\draw (1,0.3) node{$a_1$}; \draw (1.5,-0.3) node{$b_2$};

\filldraw [black] (2,0) circle (1pt);

\draw (2,0.3) node{$a_2$};

\draw (3,0.3) node{$\cdots$};

\filldraw [black] (4,0) circle (1pt);

\draw (4,0.3) node{$a_n$};
\end{tikzpicture}
\]
the tree fraction $Q(\Delta,k)$ is the continued fraction of the
form
\[
a_0-\cfrac{|b_1|^2}{a_1-\cfrac{|b_2|^2}{\ddots\cfrac{\vdots}{a_{n-1}-\cfrac{|b_n|^2}{a_n}}}}
\]

\begin{lem}\label{lemInverseElement}
Let $\Delta$ be a labeled tree and $A_\Delta$ the corresponding
Hermitian matrix. The diagonal elements of the inverse matrix are
given by the tree fractions:
\[
(A_\Delta^{-1})_{k,k}=\dfrac{1}{Q(\Delta,k)}.
\]
\end{lem}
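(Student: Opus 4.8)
The plan is to prove the identity by expressing the $(k,k)$-entry of $A_\Delta^{-1}$ via Cramer's rule and then showing that the resulting ratio of determinants collapses to the tree fraction. Recall that
\[
(A_\Delta^{-1})_{k,k}=\frac{\det A_\Delta^{(k)}}{\det A_\Delta},
\]
where $A_\Delta^{(k)}$ is the matrix obtained from $A_\Delta$ by deleting the $k$-th row and column. So the statement is equivalent to the claim that $\det A_\Delta / \det A_\Delta^{(k)} = Q(\Delta,k)$. I would therefore reformulate everything in terms of this determinantal ratio and prove it by induction on the number of vertices of $\Gamma$.

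First I would handle the base case: if $\Gamma$ has the single vertex $k$, then $A_\Delta=(a_k)$, the deleted matrix is empty with determinant $1$, and the ratio is $a_k=Q(\Delta,k)$, matching clause (2) of the definition. For the inductive step, fix the root $k$ and let $k_1,\dots,k_s$ be its descendants, with $\Delta_1,\dots,\Delta_s$ the rooted labeled trees obtained by deleting $k$, and $b_1,\dots,b_s$ the labels on the edges from $k$. After deleting the $k$-th row and column, $A_\Delta^{(k)}$ is block-diagonal with blocks $A_{\Delta_i}$ (the edges of $\Gamma$ are partitioned among the components $\Delta_i$ once $k$ is removed, because $\Gamma$ is a tree), so $\det A_\Delta^{(k)}=\prod_{i=1}^s \det A_{\Delta_i}$. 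For $\det A_\Delta$ itself, I would expand along the $k$-th row (or column): the key point is that $k$ connects to each component $\Delta_i$ through exactly one edge, incident to the single vertex $k_i$, so the off-diagonal contributions organize into a clean sum. Concretely, a cofactor expansion gives
\[
\det A_\Delta=a_k\prod_{i=1}^s\det A_{\Delta_i}-\sum_{i=1}^s|b_i|^2\,\det A_{\Delta_i}^{(k_i)}\prod_{j\neq i}\det A_{\Delta_j},
\]
where $A_{\Delta_i}^{(k_i)}$ denotes $A_{\Delta_i}$ with the row and column of $k_i$ removed. Dividing by $\det A_\Delta^{(k)}=\prod_i\det A_{\Delta_i}$ yields
\[
\frac{\det A_\Delta}{\det A_\Delta^{(k)}}=a_k-\sum_{i=1}^s|b_i|^2\,\frac{\det A_{\Delta_i}^{(k_i)}}{\det A_{\Delta_i}}=a_k-\sum_{i=1}^s\frac{|b_i|^2}{Q(\Delta_i,k_i)},
\]
using the inductive hypothesis in the last step, which is exactly clause (1) of the recursive definition of $Q(\Delta,k)$. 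This completes the induction, and inverting gives $(A_\Delta^{-1})_{k,k}=1/Q(\Delta,k)$ as claimed.

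The step requiring the most care is the cofactor expansion producing the displayed formula for $\det A_\Delta$: one must verify that when $k$ is removed the tree splits into components whose vertex sets partition $V\setminus\{k\}$, that each component attaches to $k$ via a single edge at its root $k_i$, and that the two-step Laplace expansion in the row and column of $k$ genuinely produces the cross term $-|b_i|^2\det A_{\Delta_i}^{(k_i)}\prod_{j\ne i}\det A_{\Delta_j}$ with the correct sign (here Hermiticity gives $a_{k,k_i}\overline{a_{k,k_i}}=|b_i|^2$, and the sign works out because the two transpositions needed to bring rows/columns into position contribute squared signs). It may be cleanest to first prove an auxiliary determinant identity for a Hermitian matrix with a distinguished "pivot" index $k$ whose off-diagonal support within each block of $A^{(k)}$ is a single entry, and then specialize; alternatively, one can argue via the Schur complement of the $1\times 1$ block at position $(k,k)$, noting that the relevant entries of $(A_\Delta^{(k)})^{-1}$ are exactly the diagonal entries $(A_{\Delta_i}^{-1})_{k_i,k_i}$ by block-diagonality and a further application of Cramer's rule. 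Both routes are routine once the tree structure is used to pin down which entries survive; I would present the Schur complement version as it makes the recursion transparent.
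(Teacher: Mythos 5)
Your argument is correct and fills in what the paper relegates to ``an exercise in linear algebra'': reducing the claim via Cramer's rule to the ratio $\det A_\Delta / \det A_\Delta^{(k)}$, using that deleting the root $k$ makes $A_\Delta^{(k)}$ block-diagonal along the subtrees, and recovering the recursive formula for $Q(\Delta,k)$ by induction. Your closing suggestion to run the induction via the Schur complement of the $1\times 1$ block at $(k,k)$ is indeed the cleanest route: it gives $(A_\Delta^{-1})_{k,k}=\bigl(a_k-B^*D^{-1}B\bigr)^{-1}$ with $D=A_\Delta^{(k)}$ block-diagonal, so $B^*D^{-1}B=\sum_i|b_i|^2(A_{\Delta_i}^{-1})_{k_i,k_i}$ with no sign bookkeeping, and the cofactor identity you display follows as a byproduct of $\det A_\Delta=\det D\cdot(a_k-B^*D^{-1}B)$.
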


\begin{proof}
The proof is an exercise in linear algebra.
\end{proof}

We say that $S$ is the splitting of the tree $\Gamma$, if $S$ is
the partition of the set of vertices into 1- and 2-element
subsets, in which all 2-element subsets are the edges of $\Gamma$.
Let $S(\Gamma)$ be the set of all splittings of $\Gamma$. For
$S\in S(\Gamma)$ set $\sigma(S)=(-1)^p$ where $p$ is the number of
edges in the splitting.

\begin{lem}\label{lemDet}
For a labeled tree $\Delta=(\Gamma,a,b)$ there holds
\[
\det A_\Delta=\sum_{S\in S(\Gamma)}\sigma(S)\prod_{i\mbox{ vertex
of } S}a_i\prod_{e\mbox{ edge of } S}|b_e|^2.
\]
\end{lem}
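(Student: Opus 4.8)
The plan is to prove the determinant formula by induction on the number of vertices $n+1$ of the tree $\Gamma$, using cofactor expansion along the row and column of a carefully chosen vertex.

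First I would set up the induction. The base case $|V|=1$ is immediate: $A_\Delta=(a_0)$, and the only splitting is the single vertex itself, contributing $a_0$. For the inductive step, pick a leaf $\ell$ of $\Gamma$, let $e=\{j,\ell\}$ be its unique incident edge with label $b=b_e$, and let $\Gamma'=\Gamma\setminus\{\ell\}$ be the tree obtained by deleting $\ell$, and $\Gamma''=\Gamma\setminus\{j,\ell\}$ the (possibly disconnected) forest obtained by deleting both $j$ and $\ell$. Expanding $\det A_\Delta$ along the row of $\ell$: since $\ell$ is a leaf, the only nonzero off-diagonal entries in that row are $a_\ell$ (diagonal) and $b$ (or $\overline b$) in the column of $j$. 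This gives
\[
\det A_\Delta = a_\ell \det A_{\Delta'} - |b|^2 \det A_{\Delta''},
\]
where $\Delta'$ is the labeled tree on $\Gamma'$ (restricting $a,b$) and $\Delta''$ is the labeled forest on $\Gamma''$. The sign and the $|b|^2$ come from the standard $2\times 2$ minor computation for a leaf, using Hermiticity $a_{\ell j}a_{j\ell}=|b|^2$; I should note that $\det A_{\Delta''}$ genuinely means the determinant of the principal submatrix indexed by $V\setminus\{j,\ell\}$, and if that forest has several components its determinant is the product of the component determinants, to which the (multiplicative) formula of the lemma applies componentwise.

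Next I would match this recursion against the claimed sum over splittings. Split $S(\Gamma)$ into two disjoint families according to how the leaf $\ell$ is covered: either $\{\ell\}$ is a singleton block of $S$, or $\{j,\ell\}=e$ is a $2$-element block of $S$. In the first case, $S\setminus\{\{\ell\}\}$ ranges exactly over $S(\Gamma')$, the number of edges is unchanged, so $\sigma(S)=\sigma(S\setminus\{\{\ell\}\})$, and the monomial picks up an extra factor $a_\ell$; summing these contributions yields $a_\ell\sum_{S'\in S(\Gamma')}\sigma(S')\prod a_i\prod|b_e|^2 = a_\ell\det A_{\Delta'}$ by the inductive hypothesis. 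In the second case, $S\setminus\{e\}$ ranges exactly over $S(\Gamma'')$ (splittings of the forest on $V\setminus\{j,\ell\}$), the number of edges drops by one so $\sigma(S)=-\sigma(S\setminus\{e\})$, and the monomial picks up an extra factor $|b_e|^2$; summing gives $-|b|^2\sum_{S''\in S(\Gamma'')}\sigma(S'')\prod a_i\prod|b_e|^2 = -|b|^2\det A_{\Delta''}$. Adding the two cases reproduces exactly the recursion above, completing the induction.

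The one point requiring genuine care — the main (minor) obstacle — is the bijection $S(\Gamma)\cong S(\Gamma')\sqcup S(\Gamma'')$. I must check that because $\ell$ is a leaf, its only possible partner in a $2$-block of any splitting is $j$, so the dichotomy is exhaustive and disjoint; and conversely that for the forest $\Gamma''$ every splitting (in the natural sense: a partition of $V\setminus\{j,\ell\}$ into singletons and edges of $\Gamma''$) arises uniquely. Since $j$ may well be incident to other edges of $\Gamma$, one should be slightly attentive that deleting $j$ from $\Gamma$ does not create any new adjacencies — which is obvious — so that $S(\Gamma'')$ is genuinely the splitting set of the induced subforest. Everything else is bookkeeping of signs and monomials. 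An alternative, essentially equivalent route would be to expand $\det(A_\Delta)$ via the permutation-sum definition and observe that a permutation contributes a nonzero term only if it is a product of transpositions along edges together with fixed points, which immediately gives the splitting sum; I would mention this as the "morally one-line" proof but carry out the induction for rigor.
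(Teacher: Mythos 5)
Your proof is correct and takes essentially the same approach as the paper's, which simply says to expand the determinant along the row of a leaf vertex; you have carried out this induction in full, including the bijection between splittings of $\Gamma$ and those of $\Gamma'$ and $\Gamma''$.
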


\begin{proof}
Another exercise in linear algebra. Expand the determinant along
the row corresponding to a leaf vertex of the tree.
\end{proof}

\begin{cor}\label{corDetOfArrow}
For an arrow matrix (which corresponds to the star graph) we have
\[
\det\begin{pmatrix}
a_0 & b_1 & \cdots & b_n\\
\overline{b_1}& a_1 & \ldots &0\\
\vdots& \vdots & \ddots & \vdots\\
\overline{b_n} & 0 & \cdots & a_n
\end{pmatrix}=a_0a_1\cdots a_n-\sum_{i=1}^n|b_i|^2a_1\cdots
\widehat{a_i}\cdots a_n.
\]
The star graph is the only graph for which $\det(A_\Delta)$ is
quadratic in variables $b_i$.
\end{cor}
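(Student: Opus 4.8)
\emph{The determinant formula.} The identity is the specialization of Lemma~\ref{lemDet} to $\Gamma=\St_n$. A splitting $S$ of $\St_n$ is a matching of $\St_n$ together with the remaining vertices as singletons, and since every edge of the star contains the vertex $0$, a matching of $\St_n$ has at most one edge. Hence $S(\St_n)$ consists of the all-singletons splitting, with $\sigma=1$, contributing $a_0a_1\cdots a_n$, and of the splittings $S_i$ ($i=1,\dots,n$), each consisting of the single edge $\{0,i\}$ and the remaining $n-1$ vertices as singletons, with $\sigma(S_i)=-1$ and contribution $-|b_i|^2\prod_{j\neq 0,i}a_j=-|b_i|^2a_1\cdots\widehat{a_i}\cdots a_n$. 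Summing over $S(\St_n)$ gives the claim. (Equivalently, one expands the determinant along the last row, exactly as in the proof of Lemma~\ref{lemDet}.)

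\emph{Uniqueness among trees.} From Lemma~\ref{lemDet} one reads off the degree of $\det A_\Delta$ in the variables $b_e,\overline{b_e}$: a splitting with $p$ edges contributes the monomial $\sigma(S)\prod_i a_i\prod_e|b_e|^2$, of $b$-degree $2p$, and distinct splittings produce distinct monomials in the $a_i,b_e,\overline{b_e}$ (a splitting is determined by its edge set), so no cancellation occurs. Therefore the $b$-degree of $\det A_\Delta$ equals $2\nu(\Gamma)$, where $\nu(\Gamma)$ is the matching number of $\Gamma$, and $\det A_\Delta$ is quadratic in the $b_i$ precisely when $\nu(\Gamma)\leqslant 1$. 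For a tree this forces $\Gamma$ to be a star: a non-star tree has diameter $\geqslant 3$, hence contains a geodesic $x-y-z-w$ and thus the two independent edges $\{x,y\}$, $\{z,w\}$; conversely $\nu(\St_n)\leqslant 1$ since all edges of a star share a vertex. Hence among trees the star is the unique graph with quadratic determinant, and for $n\geqslant 3$ this genuinely excludes e.g. the path $\I_n$.

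\emph{General graphs.} If the statement is wanted for arbitrary connected graphs, Lemma~\ref{lemDet} no longer applies directly, so I would instead argue from the Leibniz expansion of $\det A_\Delta$: a permutation of $\{0,\dots,n\}$ contributes a nonzero monomial only if each of its cycles is a loop of $\Gamma$; a transposition along an edge contributes $|b_e|^2$, while an $\ell$-cycle around a cycle of $\Gamma$ contributes a monomial of $b$-degree $\ell$. A connected graph that is not a tree is either the triangle, whose determinant contains the surviving cubic term $2\,\Re(b_1b_2\overline{b_3})$, or else contains two independent edges and hence a surviving quartic term; in both cases the $b$-degree exceeds $2$. I expect the only points that require real care are this no-cancellation bookkeeping (checking that the top $b$-degree monomials are not killed by others of the same degree) and the elementary graph-theoretic fact that a tree of matching number at most $1$ is a star; everything else is a direct substitution into Lemma~\ref{lemDet}.
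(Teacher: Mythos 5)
Your proof of the determinant formula is the calculation the paper intends: Corollary~\ref{corDetOfArrow} follows by specializing Lemma~\ref{lemDet} to $\St_n$, where the key observation is exactly yours --- every edge of $\St_n$ contains vertex $0$, so no splitting can use more than one edge, leaving only the all-singletons splitting and the $n$ one-edge matchings. The paper leaves the corollary unproved, trusting this direct specialization, so you match its intent.

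The uniqueness claim is asserted without argument in the paper, so the content you supply is genuinely new. Your argument is correct: for trees, the terms of Lemma~\ref{lemDet} are indexed by matchings of $\Gamma$, a matching is recoverable from its monomial (the set of edges $e$ with $|b_e|^2$ appearing determines it), so the polynomial has $b$-degree exactly $2\nu(\Gamma)$ with no cancellation, and a tree with $\nu(\Gamma)\leqslant 1$ is a star because a non-star tree has diameter $\geqslant 3$ and hence two independent edges. Your extension to arbitrary connected graphs via the Leibniz expansion is also sound, and you correctly identify the one point requiring care: cancellation of the top-degree monomial. For two independent edges $e_1, e_2$, the monomial $|b_{e_1}|^2|b_{e_2}|^2\prod_{i\notin e_1\cup e_2}a_i$ comes from a unique permutation, so its coefficient is $\pm1$; for a triangle, the two $3$-cycles contribute $b_{01}b_{12}\overline{b_{02}}$ and its conjugate with equal sign, and each monomial survives. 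So among all connected graphs with at least two edges, the star is indeed the unique one with $b$-degree $\leqslant 2$. Your hedging at the end is appropriate but unnecessary --- the bookkeeping does close.
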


\section{Space of isospectral arrow-matrices}\label{secArrowMatricesOrbits}

Let $\Delta=(\St_n,a,b)$ be a labeled star graph and $A_\Delta$ be
the corresponding arrow matrix given by \eqref{eqArrowMatrixView}.
Consider the isospectral space
$M_{\St_n,\lambda}=\{A_\Delta\mid \Spec A_\Delta=\lambda\}$. 
We describe the image of the moment map of such matrices, that is
the set of all possible diagonals $(a_0,\ldots,a_n)\in\Ro^{n+1}$.
Since $a_0=\sum_{i=0}^n\lambda_i-\sum_{i=1}^na_i$, it is
sufficient to describe all possible $(a_1,\ldots,a_n)\in \Ro^n$.

\begin{prop}\label{propMomentImage}
Let $I_j=[\lambda_{j-1},\lambda_j]\subset\Ro$, for $j=1,\ldots,n$.
Then $\mu(M_{\St_n,\lambda})$ is the set
\[
\{(a_0,a_1,\ldots,a_n)\mid (a_1,\ldots,a_n)\in \ca{R}_n,\quad
a_0=\sum\nolimits_{i=0}^n\lambda_i-\sum\nolimits_{i=1}^na_i\},
\]
where
\[
\ca{R}_n=\bigcup_{\sigma\in
\Sigma_n}I_{\sigma(1)}\times\cdots\times I_{\sigma(n)}\subset
\Ro^n
\]
is the union of $n!$ cubes of dimension $n$.
\end{prop}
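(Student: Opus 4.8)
The plan is to translate membership in $M_{\St_n,\lambda}$ into an inverse eigenvalue problem for the characteristic polynomial, and then prove the two inclusions separately: the Cauchy interlacing inequalities give $\mu(M_{\St_n,\lambda})\subseteq\ca{R}_n$, while an explicit construction together with a compactness argument gives the reverse. First I would apply Corollary~\ref{corDetOfArrow} with each diagonal entry $a_i$ replaced by $a_i-x$ and use $\det(xI-A_\Delta)=(-1)^{n+1}\det(A_\Delta-xI)$ to record the identity
\[
\det(xI-A_\Delta)=\prod_{k=0}^{n}(x-a_k)-\sum_{i=1}^{n}|b_i|^2\prod_{\substack{j=1\\ j\ne i}}^{n}(x-a_j).
\]
Hence $A_\Delta\in M_{\St_n,\lambda}$ iff this equals $\prod_{k=0}^{n}(x-\lambda_k)$; comparing coefficients of $x^n$ gives $\tr A_\Delta=\sum_i a_i=\sum_k\lambda_k$, which is the stated relation for $a_0$. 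Since $|b_i|^2$ ranges over all of $\Rg$, the question reduces to: for which $(a_1,\dots,a_n)$ do there exist reals $r_i\ge 0$ with
\[
\prod_{k=0}^{n}(x-\lambda_k)=(x-a_0)\prod_{i=1}^{n}(x-a_i)-\sum_{i=1}^{n}r_i\prod_{\substack{j=1\\ j\ne i}}^{n}(x-a_j),\qquad a_0=\sum_k\lambda_k-\sum_{i\ge 1}a_i .
\]

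For necessity, I would observe that $\diag(a_1,\dots,a_n)$ is the principal submatrix of $A_\Delta$ obtained by deleting the first row and column, so the Cauchy interlacing theorem forces the nondecreasing rearrangement $a_{(1)}\le\dots\le a_{(n)}$ of $(a_1,\dots,a_n)$ to satisfy $\lambda_{j-1}\le a_{(j)}\le\lambda_j$, i.e. $a_{(j)}\in I_j$. Thus $(a_1,\dots,a_n)\in I_{\sigma(1)}\times\dots\times I_{\sigma(n)}$ for the permutation $\sigma$ that sorts the $a_i$, so $(a_1,\dots,a_n)\in\ca{R}_n$.

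For sufficiency I would use that $\Spec A=\lambda$ pins down the Frobenius norm, so $M_{\St_n,\lambda}$ is closed and bounded, hence compact, and $\mu(M_{\St_n,\lambda})$ is closed. Since $\ca{R}_n$ is the closure of the union of the open cubes $\mathrm{int}(I_{\sigma(1)}\times\dots\times I_{\sigma(n)})$, it suffices to realize every $(a_1,\dots,a_n)$ with $\lambda_{\sigma(j)-1}<a_j<\lambda_{\sigma(j)}$; conjugating by the permutation matrix fixing the index $0$ (which preserves $M_{\St_n}$ and the spectrum) reduces this to $\sigma=\mathrm{id}$, so $\lambda_{j-1}<a_j<\lambda_j$ for all $j$, whence $a_1<a_2<\dots<a_n$ and the $a_j$ are distinct from every $\lambda_k$. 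Substituting $x=a_i$ into the displayed polynomial identity forces
\[
r_i=-\frac{\prod_{k=0}^{n}(a_i-\lambda_k)}{\prod_{j\ne i}(a_i-a_j)},
\]
and a sign count shows $r_i>0$: the numerator has $i$ positive and $n+1-i$ negative factors, hence sign $(-1)^{n+1-i}$, while the denominator has sign $(-1)^{n-i}$. With these $r_i$ both sides of the identity are monic of degree $n+1$, agree at the $n$ distinct points $a_1,\dots,a_n$, and have equal coefficients of $x^n$ (the trace relation), so their difference has degree $\le n-1$ with $n$ roots and therefore vanishes. Thus the arrow matrix with diagonal $(a_0,\dots,a_n)$ and $b_i=\sqrt{r_i}$ lies in $M_{\St_n,\lambda}$ and $\mu$ sends it to the prescribed point.

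The main obstacle is the sufficiency direction: one must identify the unique candidate values $r_i$ and verify positivity from the interlacing sign pattern, and then cover the boundary of $\ca{R}_n$, where the explicit formula degenerates ($0/0$ when the $a_i$ collide or meet an eigenvalue) — this is exactly what the compactness/closedness argument circumvents.
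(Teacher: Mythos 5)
Your proof is correct, and it follows the same overall strategy as the paper's (translate membership in $M_{\St_n,\lambda}$ into a polynomial identity, prove necessity by an interlacing-type argument, prove sufficiency by an explicit partial-fraction construction), but both halves are executed differently in ways worth noting. For necessity, the paper reasons directly from the secular equation $\sum_i |b_i|^2/(a_i-\lambda)=a_0-\lambda$, counting roots interval by interval and then handling collisions among the $a_i$ by a separate discussion; you instead observe that $\diag(a_1,\dots,a_n)$ is the principal submatrix obtained by deleting row and column $0$ and invoke the Cauchy interlacing theorem, which gives $a_{(j)}\in[\lambda_{j-1},\lambda_j]$ in one stroke with no case analysis. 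For sufficiency on the interior, the two constructions are the same: both identify $r_i=|b_i|^2$ as the (negated) partial-fraction residues and verify positivity by the same sign count; the verification that the resulting matrix has spectrum $\lambda$ differs cosmetically (the paper uses the tree-fraction formula for $(A-\lambda E)^{-1}_{00}$ from Lemma~\ref{lemInverseElement}, you compare monic polynomials of degree $n+1$ agreeing at $n$ points and in the $x^n$ coefficient).

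The most substantive difference is at the boundary. The paper explicitly constructs preimages only for interior points of $\ca{R}_n$ and shows that points outside $\ca{R}_n$ are not in the image, leaving the containment of $\partial\ca{R}_n$ in the image implicit. You close this by noting that $M_{\St_n,\lambda}$ is compact (fixed spectrum fixes the Frobenius norm), hence $\mu(M_{\St_n,\lambda})$ is closed and must contain $\ca{R}_n=\overline{\operatorname{int}\ca{R}_n}$. This is a genuine (if small) improvement in rigor, and it is exactly the right way to avoid dealing with the degenerate $0/0$ cases in the residue formula when $a_i$ collide or hit an eigenvalue. One trivial slip: when you sort, if $\sigma$ is the permutation with $a_{\sigma(1)}\le\cdots\le a_{\sigma(n)}$, then $(a_1,\dots,a_n)\in I_{\sigma^{-1}(1)}\times\cdots\times I_{\sigma^{-1}(n)}$ rather than $I_{\sigma(1)}\times\cdots\times I_{\sigma(n)}$; this is harmless since you are taking a union over all of $\Sigma_n$.
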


\begin{proof}
First note that the permutation group $\Sigma_n$ acts on the star
graph $\Gamma$ by permuting its rays. As a corollary, there is an
action of $\Sigma_n$ on the vector space $M_{\St_n}$. The
permutation action preserves the spectrum, hence there is an
induced $\Sigma_n$-action on $M_{\St_n,\lambda}$. Therefore, we
may assume $a_1\leqslant a_2\leqslant\cdots\leqslant a_n$.

Let us prove that under condition
\[
\lambda_0<a_1<\lambda_1<a_2<\lambda_2<\cdots<a_n<\lambda_n,\qquad
a_0=\sum\nolimits_{i=0}^n\lambda_i-\sum\nolimits_{i=1}^na_i
\]
there exists an arrow matrix $A_\Delta$ with the diagonal
$(a_0,\ldots,a_n)$ and eigenvalues $\lambda_0,\ldots,\lambda_n$.
This would imply that the interior of $\ca{R}_n$ lies in the image
of the moment map. Consider the polynomials
$P(\lambda)=\prod_{i=0}^n(\lambda-\lambda_i)$,
$Q(\lambda)=\prod_{i=1}^n(\lambda-a_i)$. Division by $Q(\lambda)$
yields
\begin{equation}\label{eqDivis}
P(\lambda)=(\lambda-\alpha)Q(\lambda)+R(\lambda).
\end{equation}
It is easy to check that
$\alpha=\sum\nolimits_{i=0}^n\lambda_i-\sum\nolimits_{i=1}^na_i=a_0$.
Substituting all possible $a_i$ into \eqref{eqDivis}, we get
\[
R(a_n)=P(a_n)<0, \quad R(a_{n-1})=P(a_{n-1})>0, \quad
R(a_{n-2})=P(a_{n-2})<0,\quad\ldots
\]
For the rational function $\frac{P(\lambda)}{Q(\lambda)}$ we have
the partial fraction expansion
\[
\dfrac{P(\lambda)}{Q(\lambda)}=\lambda-a_0 +
\dfrac{r_1}{\lambda-a_1}+\cdots+\dfrac{r_n}{\lambda-a_n},
\]
where
\[
r_i=\dfrac{R(a_i)}{\prod_{j\neq i}(a_i-a_j)}<0\quad\mbox{for all }
i=1,\ldots,n.
\]
Hence we can put $r_i=-|b_i|^2$ for some $b_i\in\Co$. Consider the
arrow matrix $A_\Delta$ of the form~\eqref{eqArrowMatrixView}.
According to Lemma \ref{lemInverseElement}, the top left element
of the matrix $(A_\Delta-\lambda E)^{-1}$ can be written as the
tree fraction
\[
(A_\Delta^{-1})_{0,0}=\cfrac{1}{a_0-\lambda-\cfrac{|b_1|^2}{a_1-\lambda}-\cdots-
\cfrac{|b_n|^2}{a_n-\lambda}}=-\dfrac{1}{P/Q}=-\dfrac{\prod_{i=1}^n(\lambda-a_i)}{\prod_{i=0}^n(\lambda-\lambda_i)}.
\]
This meromorphic function has poles at the points
$\lambda_0,\ldots,\lambda_n$. On the other hand the function
$(A_\Delta-\lambda E)^{-1}$ is holomorphic outside the spectrum of
$A_\Delta$. Therefore $\lambda_i$ are the eigenvalues of
$A_\Delta$. We remark that the similar technique was applied by
Moser \cite{Moser} in the study of tridiagonal matrices: he
attributes this technique to Stieltjes.

The converse reasoning shows that for every arrow matrix
$A_\Delta$ its eigenvalues and elements $a_1,\ldots,a_n$
alternate. We give another, even more elementary proof of this
fact. Let $a_1<a_2<\cdots<a_n$ be fixed. The eigenvalues are the
roots of the equation $\det(A_\Delta-\lambda E)~=~0$. Applying
Corollary \ref{corDetOfArrow}, we get the equations
\begin{equation}\label{eqEigen1}
|b_1|^2\prod_{i\neq 0,1}(a_i-\lambda)+\ldots+|b_n|^2\prod_{i\neq
0,n}(a_i-\lambda)=\prod_{i=0}^n(a_i-\lambda);
\end{equation}
\begin{equation}\label{eqEigen2}
\dfrac{|b_1|^2}{a_1-\lambda}+\ldots+\dfrac{|b_n|^2}{a_n-\lambda}=a_0-\lambda.
\end{equation}
The function on the left side of \eqref{eqEigen2} increases on
every interval between the poles, and the function on the right
side decreases. Therefore the equation has a unique root at each
of interval
\[
(-\infty,a_1), (a_1,a_2),\ldots,(a_{n-1},a_n), (a_n,\infty),
\]
Hence the numbers $a_i$ and the eigenvalues alternate. When some
of the values $a_i$ coincide, e.g. $a_j=a_{j+1}=\cdots=a_{j+r}=a$,
then, when passing from \eqref{eqEigen1} to \eqref{eqEigen2}, we
loose the root $\lambda=a$ of multiplicity $r$. Since the spectrum
is assumed simple, the numbers $a_i$, $i=1,\ldots,n$ are allowed
to have at most double collisions. Even in this case the values
$a_i$ and eigenvalues alternate non-strictly.

This reasoning shows that points outside $\ca{R}_n$ do not lie in
the image of the moment map. This completes the proof.
\end{proof}

\begin{ex}
In case $n=2$ the image of $\mu$ consists of two squares, sitting
inside a hexagon. Arrow matrices of size $3\times 3$ coincide with
tridiagonal matrices of this size up to permutation of rows and
columns. This explains Example \ref{exTridiagonal} and Fig.
\ref{pictTwistedHex}.
\end{ex}

\begin{ex}
In case $n=3$ the moment map image $\mu(M_{\St_n,\lambda})$ is the
union of $6$ cubes shown on Fig.\ref{pictNecklace}. The contour
shows the enveloping convex hull of these cubes, which is the
Schur--Horn permutohedron.
\end{ex}

\begin{figure}[h]
\begin{center}
\includegraphics[scale=0.3]{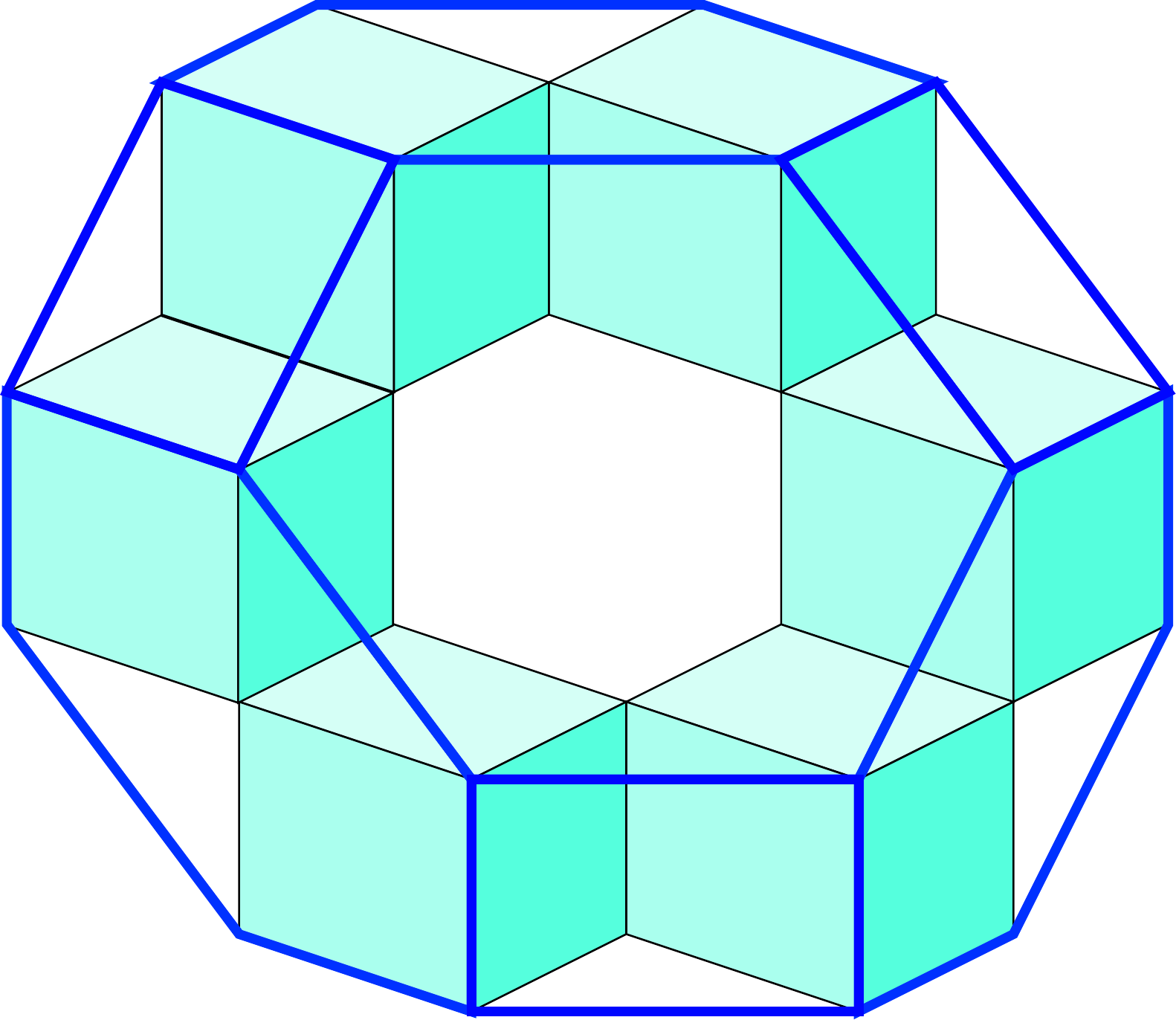}
\end{center}
\caption{The moment map image for the star graph
$\St_3$}\label{pictNecklace}
\end{figure}

\begin{thm}\label{thmIsSmooth}
The space $M_{\St_n,\lambda}$ is a smooth manifold of dimension
$2n$. The space $M_{\St_n,\lambda}^{\Ro}$ of isospectral real
symmetric arrow matrices is a smooth manifold of dimension $n$.
The action of $T^n$ on $M_{\St_n,\lambda}$ and the action of
$\Zt^n$ on $M_{\St_n,\lambda}^\Ro$ are locally standard.
\end{thm}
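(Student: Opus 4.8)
The plan is to build explicit smooth charts on $M_{\St_n,\lambda}$ using the coordinates $(a_1,\ldots,a_n)\in\ca{R}_n$ together with the phases of $b_1,\ldots,b_n$, and then show these charts fit together to give a $2n$-manifold on which the torus action is locally standard. First I would stratify $\ca{R}_n$ by how many of the $a_i$ sit at a boundary point $\lambda_j$: by Proposition \ref{propMomentImage}, at an interior point $(a_1,\ldots,a_n)$ (with, say, $a_1<\cdots<a_n$ strictly interlacing $\lambda$) all residues $r_i=R(a_i)/\prod_{j\neq i}(a_i-a_j)$ are strictly negative, so $r_i=-|b_i|^2$ determines $|b_i|>0$ uniquely and the fiber of $\mu$ over such a point is an honest torus $T^n$ (the phases $\arg b_i\in S^1$); here the map $(a_1,\ldots,a_n;\arg b_1,\ldots,\arg b_n)$ is a local diffeomorphism onto $\relint\ca{R}_n\times T^n$, which is manifestly a smooth open $2n$-manifold with the standard (free) $T^n$-action. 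The content is at the boundary: when $a_i\to\lambda_{i-1}$ or $a_i\to\lambda_i$, the residue $r_i\to0$, so $|b_i|\to0$ and the $i$-th circle collapses — exactly the local model $\Co^k\times T^{n-k}$ of a locally standard action near a codimension-$k$ orbit.

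So the key step is a local analysis at a point where some subset of the $a_i$ equals endpoints of their intervals. Suppose at a point $p$ we have $a_i=\lambda_{j(i)}$ for $i$ in some index set $J$, and $a_i$ strictly interlacing for $i\notin J$. I would produce a chart by taking as coordinates: the free variables $a_i$ for $i\notin J$ (ranging in an open interval), the phases $\arg b_i\in S^1$ for $i\notin J$, and for each $i\in J$ a single complex coordinate $z_i\in\Co$ near $0$ that is a smooth function of $b_i$ capturing both $|b_i|$ and $\arg b_i$. The crucial computation is that near such an endpoint, $|b_i|^2=-r_i$ is a smooth function of $(a_1,\ldots,a_n)$ that vanishes to exactly first order in $(a_i-\lambda_{j(i)})$ — because $r_i$ has a simple zero there (the numerator $R(a_i)=P(a_i)$ has a simple zero at $\lambda_{j(i)}$ since $\lambda$ is simple, and the denominator $\prod_{j\neq i}(a_i-a_j)$ is nonzero at $p$ when collisions are at most double, which Proposition \ref{propMomentImage} guarantees). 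Hence one can set $z_i = b_i\cdot\bigl(\text{smooth nonvanishing factor}\bigr)$ so that $|z_i|^2$ recovers the signed distance $\pm(a_i-\lambda_{j(i)})$, and $(a_i)_{i\notin J},(\arg b_i)_{i\notin J},(z_i)_{i\in J}$ becomes a homeomorphism from a neighborhood of $p$ onto an open subset of $\Ro^{n-|J|}\times T^{n-|J|}\times\Co^{|J|}\cong\Ro^{n-|J|}\times\Ro^{n-|J|}\times\Ro^{2|J|}$, visibly of dimension $2n$. The double-collision case $a_j=a_{j+1}$ with both strictly inside is handled similarly but is milder: there the two circles persist, only the map $\mu$ folds, and one uses $(a_j+a_{j+1}, a_j a_{j+1})$-type coordinates together with the two phases.

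The main obstacle is the compatibility of these charts, i.e. checking that the transition maps are smooth, and in particular that the ``smooth nonvanishing factor'' above can be chosen consistently. Concretely, one must verify that $r_i$, as a function on $\ca{R}_n$, is real-analytic and that its zero locus is cut out transversally by the boundary facet $\{a_i=\lambda_{j(i)}\}$; this follows from the explicit rational expression for $r_i$ in terms of $P$, $Q$, and the $a_j$'s, but it needs the observation that the $a_j$ for $j\neq i$ stay away from $a_i$ on the relevant facet (no triple collisions). Once charts are built, local standardness is automatic from their form $\Ro^{n-|J|}\times T^{n-|J|}\times\Co^{|J|}$ with $T^n$ acting by rotating the phases and the $\Co$-factors in the standard way, as in \eqref{eqCoordMain}. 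Independence of the smooth type on $\lambda$ then follows because the whole construction is natural in $\lambda$: varying $\lambda_0<\cdots<\lambda_n$ continuously gives a smooth family over the contractible parameter space, whence all fibers are diffeomorphic; alternatively one exhibits an explicit diffeomorphism by rescaling the intervals $I_j$. The real case $M_{\St_n,\lambda}^\Ro$ is the fixed-point set of complex conjugation and is treated by the same charts with $b_i\in\Ro$, $z_i\in\Ro$, giving an $n$-manifold with $\Zt^n$ acting by sign changes, hence locally standard in the real sense.
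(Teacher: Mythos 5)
Your approach — building explicit charts from the moment-map coordinates $(a_1,\ldots,a_n)$ and polar-to-Cartesian rewriting of the $b_i$ — is genuinely different from the paper's, which works intrinsically: the paper writes $M_{\St_n,\lambda}$ as the common zero set of the polynomials $P_j(\underline{a},\underline{b})$ together with the trace condition, proves the Jacobian has maximal rank everywhere via the Cauchy-matrix Lemma~\ref{lemMatrixFractions}, and then gets local standardness for free from the slice theorem. Your treatment of the non-collision boundary strata (one $a_i$ at an endpoint $\lambda_{j(i)}$, denominator $\prod_{k\neq i}(a_i-a_k)\neq 0$, residue $r_i$ with a simple zero, complex chart $z_i$) is sound and is in the same spirit as standard arguments near a codimension-one face.

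However, there is a genuine gap at the double-collision stratum, and it is precisely the case you dismiss as ``milder.'' First, a collision $a_j=a_{j+1}$ cannot occur ``with both strictly inside'': the interlacing argument in Proposition~\ref{propMomentImage} forces the common value to be the eigenvalue $\lambda_j$, i.e.\ the shared endpoint of $I_j$ and $I_{j+1}$, which is a corner of $\ca{R}_n$. Second, and more seriously, at such a point the quantities $r_j, r_{j+1}$ are \emph{not} continuous functions of $(a_1,\ldots,a_n)$: writing $a_j=\lambda_j-s$, $a_{j+1}=\lambda_j+t$ with $s,t\to 0^+$, both the numerator $P(a_j)$ and the factor $(a_j-a_{j+1})$ in the denominator vanish to first order, so $r_j\sim \tfrac{s}{s+t}\cdot(\mathrm{const})$ has a direction-dependent limit. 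Consequently $|b_j|^2$ is an extra free parameter in the fiber of $\mu$ (the fiber over such a point is $T^{n-2}\times S^3$, not $T^n$, as noted in Remark~\ref{remMomAngCubes}), the map $\mu$ is not a submersion there, and no coordinate system built from the $a_i$'s and phases alone — in particular not $(a_j+a_{j+1},\,a_ja_{j+1},\,\arg b_j,\,\arg b_{j+1})$ — can be a chart. A correct chart near such a point must take $b_j,b_{j+1}\in\Co$ as free coordinates and solve for $a_j,a_{j+1}$ and the remaining moduli from the defining equations; verifying that this works is precisely the nondegenerate-Jacobian computation of the paper (the rows indexed by $j\in D$, where $\frac{\dd P_j}{\dd b}$ vanishes but $\frac{\dd P_j}{\dd a_{j_l}},\frac{\dd P_j}{\dd a_{j_l+1}}$ do not all vanish). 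So while your strategy can be repaired, the repair amounts to redoing the implicit-function-theorem argument that your proposal was designed to replace.
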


\begin{proof}
We need a technical statement, well known in linear algebra.

\begin{lem}\label{lemMatrixFractions}
Let $d_i,\nu_i$, $i\in\{1,\ldots,n\}$ be two sets of numbers, such
that all $2n$ numbers are distinct. Then the square matrix
$B=(B_{i,j}=\frac{1}{d_i-\nu_j})_{1\leqslant i,j\leqslant n}$ is
invertible.
\end{lem}

\begin{proof}
The statement follows from the uniqueness of the partial fraction
expansion. Assume that $B(c_1,\ldots,c_n)^\top=0$ for a vector
$(c_1,\ldots,c_n)\in \Ro^n$. Then the rational function
\[
R(d)=\dfrac{c_1}{d-\nu_1}+\cdots+\dfrac{c_n}{d-\nu_n}
\]
has roots in $n$ points $d_1,\ldots,d_n$. Since the degree of the
numerator of $R(d)$ is less than $n$, $R(d)$ is identically zero
and we have $c_i=0$ for all $i=1,\ldots,n$. Thus $B$ is
invertible.
\end{proof}

Let us prove the theorem. The subspace $M_{\St_n,\lambda}$ is
given in the vector space $M_{\St_n}$ by the equations
\[
P_j(\underline{a},\underline{b})=|b_1|^2\prod_{i\neq
0,1}(a_i-\lambda_j)+\ldots+|b_n|^2\prod_{i\neq
0,n}(a_i-\lambda_j)-\prod_{i=0}^n(a_i-\lambda_j)=0;\qquad
j\in[n]=\{1,\ldots,n\}
\]
\[
\sum_{i=0}^n a_i-\sum_{i=0}^n\lambda_i=0.
\]
according to the Corollary \ref{corDetOfArrow}. We use the vector
notation
\[
\dfrac{\dd P_j}{\dd b}=\left(\dfrac{\dd P_j}{\dd
b_1},\ldots,\dfrac{\dd P_j}{\dd b_n}\right);\qquad \dfrac{\dd
P_j}{\dd a}=\left(\dfrac{\dd P_j}{\dd a_1},\ldots,\dfrac{\dd
P_j}{\dd a_n}\right).
\]
It is sufficient to show that the vectors $(\frac{\dd P_j}{\dd
b},\frac{\dd P_j}{\dd a})$, $j\in[n]$ are linearly independent at
all points of $M_{\St_n,\lambda}$. We have
\[
\dfrac{\dd P_j}{\dd b_k}=\overline{b_k}\prod_{i\neq
0,k}(a_i-\lambda_j);
\]
\begin{multline}\label{eqPartialB}
\dfrac{\dd P_j}{\dd b}=\left(\overline{b_1}\prod_{i\neq
0,1}(a_i-\lambda_j),\overline{b_2}\prod_{i\neq
0,2}(a_i-\lambda_j),\ldots,\overline{b_n}\prod_{i\neq
0,n}(a_i-\lambda_j)\right)=\\=\prod_{i=1}^n(a_i-\lambda_j)\left(\dfrac{\overline{b_1}}{a_1-\lambda_j},
\ldots, \dfrac{\overline{b_n}}{a_n-\lambda_j}\right).
\end{multline}

First consider the general case: let all $a_i$, $i\in[n]$ be
distinct. As proved in Proposition~\ref{propMomentImage}, we may
assume
\[
\lambda_0<a_1<\lambda_1<a_2<\cdots<a_n<\lambda_n,
\]
and $b_i\neq 0$ for all $i\in[n]$.

It follows from \eqref{eqPartialB} that the matrix formed by the
vectors $\frac{\dd P_j}{\dd b}$, $j=1,\ldots,n$, has the form
\[
\left(\dfrac{\dd P_j}{\dd
b}\right)=\prod_{i=1}^n\overline{b_i}\prod_{i\neq
j}(a_i-\lambda_j)\begin{pmatrix}
\frac{1}{a_1-\lambda_1}&\cdots&\frac{1}{a_n-\lambda_1}\\
\vdots&\ddots&\vdots\\
\frac{1}{a_1-\lambda_n}&\cdots&\frac{1}{a_n-\lambda_n}
\end{pmatrix}
\]
This matrix is nondegenerate according to Lemma
\ref{lemMatrixFractions}. This proves the smoothness of
$M_{\St_n,\lambda}$ at generic points.

Now we allow some of the points $\{a_i\}$ collide. As noted in the
proof of Proposition~\ref{propMomentImage}, only pairwise
collisions may occur:
\[
\cdots<a_{j_1}=a_{j_1+1}<\cdots<a_{j_2}=a_{j_2+1}<\cdots<a_{j_s}=a_{j_s+1}<\cdots
\]
and each pair of collided diagonal elements determines the
eigenvalue $\lambda_{j_l}=a_{j_l}=a_{j_l+1}$. All other
eigenvalues still lie on the open intervals between $a_i$. We
denote by $F$ the set of all eigenvalues, lying between $a_i$, and
by $D$ the set of eigenvalues which come from collided diagonal
elements. We have $D=\{j_1,\ldots,j_s\}$ and
$F=\{0,\ldots,n\}\setminus D$.

Let $A\in M_\Gamma$ be a matrix such that
$a_{j_l}=a_{j_l+1}=\lambda_{j_l}$. Simple computation shows that
$\dfrac{\dd P_{j_l}}{\dd b}(A)=0$. Moreover at a point $A$ we have
\[
\dfrac{\dd P_{j_l}}{\dd a_j}(A)=0
\]
if $j\neq j_l,j_l+1$, and
\[
\dfrac{\dd P_{j_l}}{\dd a_{j_l}}(A)=|b_{j_l+1}|^2\prod_{i\neq
j_l,j_l+1}(a_i-\lambda_{j_l});\qquad \dfrac{\dd P_{j_l}}{\dd
a_{j_l+1}}(A)=|b_{j_l}|^2\prod_{i\neq
j_l,j_l+1}(a_i-\lambda_{j_l})
\]
Similar reasoning as before shows that
$-(|b_{j_l}|^2+|b_{j_l+1}|^2)$ is the coefficient of
$\frac{1}{\lambda-a_{j_l}}$ in the partial fraction expansion of
the (reducible) fraction
$\frac{\prod_{i=0}^n(\lambda-\lambda_i)}{\prod_{i=1}^n(\lambda-a_i)}$.
Hence $|b_{j_l}|^2+|b_{j_l+1}|^2\neq 0$ and one of the numbers
$\frac{\dd P_{j_l}}{\dd a_{j_l}}(A)$, $\frac{\dd P_{j_l}}{\dd
a_{j_l+1}}(A)$ is nonzero. Without loss of generality assume that
$\frac{\dd P_{j_l}}{\dd a_{j_l}}(A)\neq 0$ for all $l=1,\ldots,s$.

The rows of the rectangular matrix $\left(\frac{\dd P_j}{\dd
b}(A)\right)$, corresponding to $j\in F$ (that is $j\neq
j_1,\ldots,j_s$), are linearly independent according to Lemma
\ref{lemMatrixFractions}. Besides, the rows of the rectangular
matrix $\left(\frac{\dd P_j}{\dd b}(A),\frac{\dd P_j}{\dd
a}(A)\right)$, corresponding to $j\in D$ (that is $j=j_l$ for some
$l$), have zeroes at all positions except $\frac{\dd P_j}{\dd
a_{j_l}}(A)$ and $\frac{\dd P_j}{\dd a_{j_l+1}}(A)$ and, moreover
$\frac{\dd P_j}{\dd a_{j_l}}(A)\neq 0$. It follows that the matrix
$\left(\frac{\dd P_j}{\dd b}(A),\frac{\dd P_j}{\dd a}(A)\right)$
of the form
\[
\begin{tikzpicture}[decoration=brace] \matrix (m) [matrix of
math nodes,left delimiter={(},right delimiter={)}] {
\ast & \cdots & \ast & \ast &\ast&\ast& \cdots &\ast&\ast& \cdots &&& \ast \\
\ast & \cdots & \ast & \ast &\ast&\ast& \cdots &\ast&\ast& \cdots &\ast&\ast& \ast \\
0 & \cdots & 0 &  0 & \ast & \ast & 0 & 0 & 0 & 0 & 0 & 0 & 0  \\
0 & \cdots & 0 &  0 & 0 & 0 & 0 & \ast & \ast & 0 & 0 & 0 & 0  \\
0 & \cdots & 0 &  0 & \cdots &\cdots&\cdots&\cdots&\cdots&\cdots&\cdots&\cdots & 0  \\
0 & \cdots & 0 &  0 & 0 & 0 & 0 & 0 & 0 & 0 & \ast & \ast & 0 \\
};

\draw[decorate,transform canvas={xshift=-1.5em},thick]
(m-6-1.west) -- node[left=2pt] {$D$} (m-3-1.west);
\draw[decorate,transform canvas={xshift=-1.5em},thick]
(m-2-1.west) -- node[left=2pt] {$F$} (m-1-1.west);

\draw[decorate,transform canvas={yshift=0.5em},thick]
(m-1-1.north) -- node[above=2pt] {$\dd P_j/\dd b$} (m-1-3.north);
\draw[decorate,transform canvas={yshift=0.5em},thick]
(m-1-4.north) -- node[above=2pt] {$\dd P_j/\dd a$} (m-1-13.north);

\draw (m-2-1.south west) -- (m-2-13.south east);

\draw (m-1-3.north east) -- (m-6-3.south east);

\draw[decorate,transform canvas={yshift=-0.5em},thick]
(m-6-6.south) -- node[below=2pt] {$\{j_1,j_1+1\}$} (m-6-5.south);

\draw[decorate,transform canvas={yshift=-0.5em},thick]
(m-6-9.south) -- node[below=2pt] {$\{j_2,j_2+1\}$} (m-6-8.south);

\draw[decorate,transform canvas={yshift=-0.5em},thick]
(m-6-12.south) -- node[below=2pt] {$\{j_s,j_s+1\}$}
(m-6-11.south);

%
%
\end{tikzpicture}
\]
has the maximal rank. Therefore $M_{\St_n,\lambda}$ is smooth at
all points.

Since the action of $T^n$ on $M_\Gamma=\Ro^{n+1}\times\Co^n$ is
locally standard and smooth submanifold $M_{\St_n,\lambda}$ is
preserved by the action, the induced action of $T^n$ on
$M_{\St_n,\lambda}$ is locally standard according to slice
theorem.
\end{proof}

For convenience we denote the orbit space $M_{\St_n,\lambda}/T^n$
by $Q_n$.

\begin{prop}
The map $\widetilde{\mu}\colon Q_n\to \ca{R}_n$ induced by $\mu$
is a homotopy equivalence.
\end{prop}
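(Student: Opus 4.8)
The plan is to understand the fibers of $\widetilde{\mu}$ and show each is contractible, then deduce that $\widetilde{\mu}$ is a homotopy equivalence by a fibration-type argument (or, more carefully, by exhibiting a deformation retraction built from these fibers). Recall from the proof of Proposition~\ref{propMomentImage} that a point $(a_1,\ldots,a_n)$ in the interior of a top-dimensional cube $I_{\sigma(1)}\times\cdots\times I_{\sigma(n)}$ (equivalently, a diagonal with strictly alternating entries) determines the numbers $r_i=-|b_i|^2<0$ uniquely via partial fractions. Thus over such an interior point the fiber of $\mu\colon M_{\St_n,\lambda}\to\ca{R}_n$ is exactly a product of circles: each $b_i$ ranges over a circle of radius $\sqrt{-r_i}>0$, and the $T^n$-action is transitive on this product of circles. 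Hence over the interior of each top cube, $\widetilde{\mu}$ is a bijection. The subtlety is on the boundary: when a pair $a_j=a_{j+1}$ collides, the common value is forced to equal some $\lambda_k$, one of the two radii $|b_j|,|b_{j+1}|$ may vanish (their squares sum to a positive number), and the corresponding circle degenerates; on deeper strata more radii vanish. So the fiber of $\widetilde{\mu}$ over a boundary point is a single point too, but the fiber of $\mu$ is a quotient torus.

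The key observation is that $\widetilde\mu$ is in fact a \emph{continuous bijection}. Injectivity: by the partial-fraction uniqueness argument (applied also in the collided case, as in the proof of Proposition~\ref{propMomentImage}), the diagonal $(a_0,\ldots,a_n)$ determines all $|b_i|^2$, hence determines the $T^n$-orbit of $A_\Delta$, since the torus acts transitively on the choice of arguments of the $b_i$. Surjectivity is exactly the content of Proposition~\ref{propMomentImage}. So $\widetilde\mu$ is a continuous bijection from the compact space $Q_n$ (continuous image of the compact manifold $M_{\St_n,\lambda}$) to the Hausdorff space $\ca{R}_n$, and therefore a homeomorphism. A homeomorphism is in particular a homotopy equivalence, which is more than what is claimed.

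The step I expect to require the most care is verifying that $Q_n$ is compact --- equivalently, that $M_{\St_n,\lambda}$ is compact, so that the continuous bijection argument applies. This follows because $M_{\St_n,\lambda}$ is a closed subset of $M_{\St_n}$ (defined by the polynomial equations of Theorem~\ref{thmIsSmooth}) and is bounded: the entries $a_i$ lie in the compact permutohedron image by Proposition~\ref{propMomentImage}, and $|b_i|^2=-r_i$ is a continuous (hence bounded) function of the $a_i$ on that compact region via the partial-fraction formula $r_i=R(a_i)/\prod_{j\neq i}(a_i-a_j)$ --- one must check this expression stays bounded as diagonal entries collide, which it does since the collisions are at most double and the numerator $R(a_i)=P(a_i)$ vanishes to matching order. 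Given compactness, the rest is formal: a continuous bijection from a compact space to a Hausdorff space is a homeomorphism, and $\widetilde\mu$ is the induced map, so it is a homeomorphism and a fortiori a homotopy equivalence. (If one prefers to avoid the compactness subtlety, an alternative is to construct an explicit deformation retraction of $\ca{R}_n$ onto the image of the vertices and lift it through $\widetilde\mu$ using the fiberwise circle-collapsing structure described above; but the bijection-plus-compactness route is cleaner.)
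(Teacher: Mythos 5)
Your proof contains a genuine error: the map $\widetilde\mu$ is \emph{not} a bijection, so the continuous-bijection-plus-compactness argument cannot work. The failure of injectivity occurs precisely at the collision strata. Suppose $a_j=a_{j+1}=a$ (which forces $a=\lambda_k$ for some $k$). Then in the determinant formula of Corollary~\ref{corDetOfArrow} every term involving $|b_j|^2$ or $|b_{j+1}|^2$ carries a factor $(a-\lambda)$, and after factoring out $(a-\lambda)$ the remaining expression depends on $b_j,b_{j+1}$ only through the sum $|b_j|^2+|b_{j+1}|^2$. Equivalently, in your partial-fraction argument $Q(\lambda)$ acquires a double root at $\lambda=a$ while $P(\lambda)$ has a simple root there, so $P/Q$ has only a simple pole at $a$ and its residue determines $r_j+r_{j+1}$, not $r_j$ and $r_{j+1}$ individually. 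Since $|b_j|$ and $|b_{j+1}|$ are invariants of the $T^n$-orbit, the fiber $\widetilde\mu^{-1}(a)$ at such a point is a whole interval parametrized by the split of $|b_j|^2+|b_{j+1}|^2$ (and a higher-dimensional cube when several pairs collide simultaneously); the paper makes this explicit in Remark~\ref{remMomAngCubes}. Your parenthetical ``one of the two radii may vanish'' is where the reasoning slips: the two squares sum to a fixed positive number but are otherwise unconstrained, so neither is forced to vanish except at the endpoints of that interval.

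The paper's argument instead shows that every fiber $\widetilde\mu^{-1}(a)$ is a nonempty convex polytope (the set of $(c_1,\ldots,c_n)=(|b_1|^2,\ldots,|b_n|^2)$ with $c_i\geqslant 0$ satisfying the linear spectral equations), hence contractible, and then appeals to the standard fact that a proper surjection with contractible point-preimages between sufficiently nice compact spaces is a homotopy equivalence. To repair your proof you would need to replace the injectivity claim with exactly this kind of contractible-fiber argument; the compactness observation you make is still useful for showing $\widetilde\mu$ is a closed (hence proper) map, but it is not a substitute for analyzing the positive-dimensional fibers.
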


\begin{proof}
It is sufficient to prove that the preimage
$\widetilde{\mu}^{-1}(a)$ is contractible for any point $a\in
\ca{R}_n$. The condition on the spectrum yields the system of
equations
\begin{equation}\label{eqPreimageParameters}
|b_1|^2\prod_{i\neq 0,1}(a_i-\lambda_j)+\ldots+|b_1|^2\prod_{i\neq
0,n}(a_i-\lambda_j)=\prod_{i=0}^n(a_i-\lambda_j),\qquad
j=0,1,\ldots,n.
\end{equation}
according to Corollary \ref{corDetOfArrow}. Therefore, with the
diagonal elements $a_i$ and the eigenvalues $\lambda_i$ fixed, the
possible off-diagonal elements $b_i$ lie on the intersection of
Hermitian real quadrics of special type. By passing to the orbit
space we simply forget the arguments of the numbers $b_i$. Setting
$c_i=|b_i|^2$, we see that the parameters $c_i$ satisfy the system
of linear equations and conditions $c_i\geqslant 0$. Whenever this
set is non-empty, it is a convex polytope hence contractible.
\end{proof}

\begin{rem}\label{remMomAngCubes}
For generic points $a\in\ca{R}_n$ the preimage
$\widetilde{\mu}^{-1}(a)$ is a single point. In nongeneric points
the preimage $\widetilde{\mu}^{-1}(a)$ is a cube. It can be seen
that each pair of collided values $a_{j_l}=a_{j_l+1}$ produces an
interval in the preimage of $\widetilde{\mu}$. This interval is
parametrized by the barycentric coordinates
$|b_{j_l}|^2,|b_{j_l+1}|^2$ subject to the relation
$|b_{j_l}|^2+|b_{j_l+1}|^2=\const$ (note that if
$a_{j_l}=a_{j_l+1}$, then the expression
$|b_{j_l}|^2+|b_{j_l+1}|^2$ separates in all the
equations~\eqref{eqPreimageParameters}).

The total preimage $\mu^{-1}(a)$ is therefore homeomorphic to the
product of 3-spheres (a 3-sphere is the moment-angle manifold
corresponding to the interval, see \cite{BPnew} for a general
theory of moment-angle manifolds and complexes).
\end{rem}

To determine the homotopy type of the orbit space $Q_n\simeq
\ca{R}_n$, we need a description of the combinatorics of a
permutohedron (details could be found in many sources, e.g. in
\cite{Tomei}). We fix a finite set $[n]=\{1,\ldots,n\}$.

\begin{con}\label{consCombPermut}
Let $S=(S_1,\ldots,S_k)$ be an arbitrary linearly ordered
partition of the set $[n]=\{1,\ldots,n\}$ into nonempty subsets,
that is $S_i\cap S_j=\varnothing$ for $i\neq j$, and
$[n]=\bigcup_iS_i$. The set $P$ of all such partitions is
partially ordered: $S<S'$ if $S$ is an order preserving refinement
of $S'$. It is known that $P$ is isomorphic to the partially
ordered set of faces of the permutohedron $\Pe^{n-1}$. The
polytope itself corresponds to the maximal partition $S_1=[n]$.
The vertices correspond to ordered partitions of $[n]$ into
one-element subsets $(\{s_1\},\ldots,\{s_n\})$, which are actually
just the permutations $\tau\in\Sigma_n$, $\tau(i)=s_i$.

There is an edge between two vertices of a permutohedron if the
corresponding permutations differ by a transposition of $i$ and
$i+1$. As a corollary, we obtain a standard fact that the
1-skeleton of the permutohedron is the Cayley graph of the group
$\Sigma_n$ with the generators $(1,2), (2,3), \ldots, (n-1,n)$.

Let $F_S$ be the face of a permutohedron, corresponding to the
ordered partition $S=(S_1,\ldots,S_k)$. The polytope $F_S$ is
combinatorially isomorphic to the product of permutohedra
$\Pe^{|S_1|-1}\times\cdots\times\Pe^{|S_k|-1}$. If $|S_i|\leqslant
2$ for all $i$, then the corresponding face $F_S$ is a product of
intervals and points. We will call such faces \emph{cubical}. A
cubical face of $\Pe^{n-1}$ has dimension at most $[n/2]$.
\end{con}

\begin{rem}
We make a remark on another important fact. Consider the standard
convex realization of the permutohedron
\[
\Pe^{n-1}=\conv\{(x_{\sigma(1)},\ldots,x_{\sigma(n)})\mid
\sigma\in\Sigma_n\},
\]
where $x_1<x_2<\cdots<x_n$. The vertex
$(x_{\sigma(1)},\ldots,x_{\sigma(n)})$ in this convex realization
corresponds to the vertex $(\tau(1),\ldots,\tau(n))$ in the
combinatorial description, where $\tau=\sigma^{-1}$. For this
reason, there is a duality in the notation: it is more convenient
to encode the vertices by permutations $\sigma$ in geometrical
tasks, and by $\tau$ in combinatorial tasks.
\end{rem}

Let $\Sq_{n-1}$ be the cell complex, which consists of all cubical
faces of $(n-1)$-dimensional permutohedron. The complex
$\Sq_{n-1}$ is connected, since every edge of $\Pe^{n-1}$ is a
cubical face, therefore lies in $\Sq_{n-1}$.

\begin{prop}
The orbit space $Q_n = M_{\Gamma,\lambda}/T\simeq \ca{R}_n$ is
homotopy equivalent to~$\Sq_{n-1}$.
\end{prop}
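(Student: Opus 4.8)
The plan is to show that $\ca{R}_n$, the union of the $n!$ unit-type cubes $I_{\sigma(1)}\times\cdots\times I_{\sigma(n)}$, deformation retracts onto a subcomplex homeomorphic to $\Sq_{n-1}$. First I would record the combinatorial anatomy of $\ca{R}_n$. Each top-dimensional cube corresponds to a permutation $\sigma\in\Sigma_n$, i.e.\ to a choice, for each coordinate $j$, of an interval $I_{\sigma(j)}=[\lambda_{\sigma(j)-1},\lambda_{\sigma(j)}]$; this is exactly a vertex of $\Pe^{n-1}$ in the convex realization of the preceding remark. Two such cubes share a facet precisely when the permutations differ by an adjacent transposition $(k,k+1)$: they then agree on all coordinates except the two carrying the intervals $I_k$ and $I_{k+1}$, and the shared facet sits over the single shared endpoint $\lambda_k$ in those two coordinates. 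Thus the nerve of the covering of $\ca{R}_n$ by its top cubes is the $1$-skeleton of $\Pe^{n-1}$, which is the Cayley graph appearing in Construction \ref{consCombPermut}. More generally, a nonempty intersection of cubes over a set of permutations forces those permutations to agree outside a union of disjoint ``blocks'' of consecutive indices, and the intersection is a coordinate subcube; these data are exactly the ordered partitions $S$ of $[n]$ with all $|S_i|\le 2$, i.e.\ the cubical faces of $\Pe^{n-1}$.

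Next I would build the homotopy equivalence. The cleanest route is to exhibit $\ca{R}_n$ as the total space of a regular CW/cubical structure whose face poset, after collapsing, is that of $\Sq_{n-1}$; concretely, place a vertex of a new complex $K$ at the center of each top cube (one per $\sigma\in\Sigma_n$), an edge through each shared facet (one per adjacent transposition), and in general a $d$-cube through each $d$-fold ``block intersection''. Then $K$ is a subcomplex of (a subdivision of) $\ca{R}_n$ whose cells biject, dimension-preservingly and incidence-preservingly, with the cubical faces of $\Pe^{n-1}$, so $K\cong\Sq_{n-1}$ as CW complexes. It remains to see that the inclusion $K\hookrightarrow\ca{R}_n$ is a deformation retract. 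For this I would run a straight-line push: inside each top cube $C_\sigma\cong I^n$ (after the affine identification $I_{\sigma(j)}\cong[0,1]$) radially retract toward the center along the ``block-free'' coordinates while leaving the ``block'' coordinates alone; because the retraction in coordinate $j$ depends only on which block structures are active at the point, the retractions on adjacent cubes agree on their common facets, so they glue to a global deformation retraction $\ca{R}_n\to K$. (Equivalently one can argue by the nerve lemma: the cubes and all their iterated intersections are convex, hence contractible, so $\ca{R}_n$ is homotopy equivalent to the nerve of the covering, and one identifies that nerve with $\Sq_{n-1}$ via the block-partition dictionary above.)

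Finally, combining this with the already-established homotopy equivalences $Q_n\simeq\ca{R}_n$ (from the previous proposition) gives $Q_n\simeq\Sq_{n-1}$.

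\medskip

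The main obstacle is the gluing/compatibility step: one must choose the fiberwise retraction on each top cube $C_\sigma$ so that it is \emph{canonically determined by the local face data} rather than by $\sigma$ itself, so that two cubes sharing a facet prescribe the same deformation on that facet. The cleanest way to guarantee this is to make the retraction on a cube depend only, coordinate by coordinate, on whether that coordinate currently lies in a ``paired'' block (in which case do nothing) or is ``free'' (in which case retract to the midpoint $\lambda$-value), with the pairing read off from which neighbouring cubes the point is shared with; verifying that this assignment is consistent across all facets, and that the resulting map is continuous at points lying in many cubes at once, is the technical heart of the argument. The nerve-lemma variant sidesteps the explicit homotopy but then requires care that the covering by closed cubes is, e.g., a good cover in the sense needed (all finite intersections empty or contractible), which here follows because every such intersection is a convex polytope.
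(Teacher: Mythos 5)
Your nerve-lemma variant at the end is essentially the paper's argument, but as stated it skips the one nontrivial identification: the nerve of the covering by the $n!$ closed cubes is a \emph{simplicial} complex (a vertex per cube, a simplex for every collection of cubes with nonempty common intersection), whereas $\Sq_{n-1}$ is a \emph{cubical} complex, so the two cannot simply be ``identified.'' What the intersection analysis actually gives is that the nerve coincides with the simplicial complex $N$ obtained from $\Sq_{n-1}$ by replacing each cubical face with $2^{d}$ vertices by a simplex on that vertex set; one then needs the separate fact that $N\simeq \Sq_{n-1}$, which the paper imports from \cite{AB}. Without that step the argument has a gap.

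Your main route --- building an embedded copy $K$ of $\Sq_{n-1}$ inside $\ca{R}_n$ and deformation-retracting onto it --- is a genuinely different, more geometric approach, and a reasonable thing to try, but it has two gaps. First, a factual slip: two cubes $I^n_{\tau_1}$, $I^n_{\tau_2}$ with $\tau_1\tau_2^{-1}=(k,k+1)$ share a face of codimension \emph{two}, not a facet; both affected coordinates are pinned to the single shared endpoint $\lambda_k$ (for $n=2$ the two squares meet only in the corner $(\lambda_1,\lambda_1)$). Correspondingly, the nerve is not merely the $1$-skeleton of $\Pe^{n-1}$: e.g.\ the four cubes around a cubical $2$-face of $\Pe^{n-1}$ meet in a common point, which contributes a $3$-simplex to the nerve. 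Second, the embedding of $K$ inside $\ca{R}_n$ is not automatic: already for $n=2$ the straight segment between the two cube centers leaves $\ca{R}_n$ unless $\lambda_1=(\lambda_0+\lambda_2)/2$, so the $d$-cubes of $K$ must be routed (subdivided and bent) through the shared faces, and the proposed piecewise retraction then has to be verified to glue continuously at points lying in several top cubes. You correctly flag this gluing as the technical heart, but it is not carried out, and it is precisely the kind of bookkeeping that the nerve-theorem route avoids.
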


\begin{proof}
We construct the intermediate simplicial complex $N$ which is
homotopy equivalent to both $\Sq_{n-1}$ and $\ca{R}_n$.

Let $N$ be the simplicial complex, obtained from $\Sq_{n-1}$ by
substituting each cubical face with vertices
$\{v_1,\ldots,v_{2^k}\}$ by a simplex on the same vertex set. The
natural map $N\to \Sq_{n-1}$, which is identical on vertices and
linear on each simplex, is a homotopy equivalence (see \cite{AB},
where the objects of this kind were studied in the theory of nerve
complexes).

Proposition \ref{propMomentImage} implies that
$\ca{R}_n=\mu(M_{\Gamma,\lambda})$ is the union of cubes
$\bigcup_{\tau\in \Sigma}I^n_{\tau}$, where
$I^n_{\tau}=I_{\tau(1)}\times\cdots\times I_{\tau(n)}$,
$I_j=[\lambda_{j-1},\lambda_j]$. All cubes $I^n_{\tau}$ are
convex. Hence the nerve $N'$ of the covering $\bigcup_{\tau\in
\Sigma}I^n_{\tau}=\ca{R}_n$ is homotopy equivalent to $\ca{R}_n$
by the nerve theorem.

Inspecting all coordinates, we can see that the cubes
$I^n_{\tau_1}$, $I^n_{\tau_2}$ intersect if and only if, for every
$i\in[n]$, there holds $|\tau_1(i)-\tau_2(i)|\leqslant 1$. This
means that
\begin{equation}\label{eqIntersectCubesCondition}
\tau_1\tau_2^{-1}=(i_1,i_1+1)(i_2,i_2+1)\ldots(i_s,i_s+1),\qquad
i_{l+q}>i_l+1,
\end{equation}
the product of independent transpositions, interchanging
neighboring elements. Therefore, the vertices $F_{\tau_1},
F_{\tau_2}$ of the permotohedron $\Pe^{n-1}$ lie in a cubical face
$F_S$ corresponding to the partition
\[
S=(\{\tau(1)\},\{\tau(2)\},\ldots,
\{\tau(i_1),\tau(i_1+1)\},\ldots,\{\tau(i_s),\tau(i_s+1)\},
\ldots,\{\tau(n)\}),
\]
where $\tau=\tau_1$ or $\tau_2$. More generally, let a family of
cubes $I^n_{\tau_i}, i=1,\ldots,l$ intersect in total. In this
case each pair $i<j$ determines its own product of transpositions
of the form \eqref{eqIntersectCubesCondition}. All permutations
$s_{1,j}=\tau_j\tau_1^{-1}$ have order two and commute (since
$s_{1,j}s_{1,i}^{-1}=\tau_j\tau_i^{-1}$ is of order two as well).
Therefore, there exists a common partition into 1- and 2-element
subsets, which governs all of these permutations. Again, all
vertices $F_{\tau_i}$ lie in the same cubical face of a
permutohedron. Thus we have $\Sq_{n-1}\simeq N =
N'\simeq\ca{R}_n$.
\end{proof}

\begin{ex}
For $n=3$, the orbit space $Q_3$ and the image of the moment map
are homotopy equivalent to $\Sq_2$. This complex is just the union
of cubical faces of the hexagon, which coincides with its
boundary: $Q\simeq S^1$. This can be seen from
Fig.\ref{pictNecklace}.

For $n=4$, the orbit space $Q_4$ is homotopy equivalent to
$\Sq_3$. This complex is shown on Fig.\ref{pictSq}. The complex
$\Sq_3$ is homotopy equivalent to a wedge of $7$ circles.
\end{ex}

\begin{figure}[h]
\begin{center}
\includegraphics[scale=0.3]{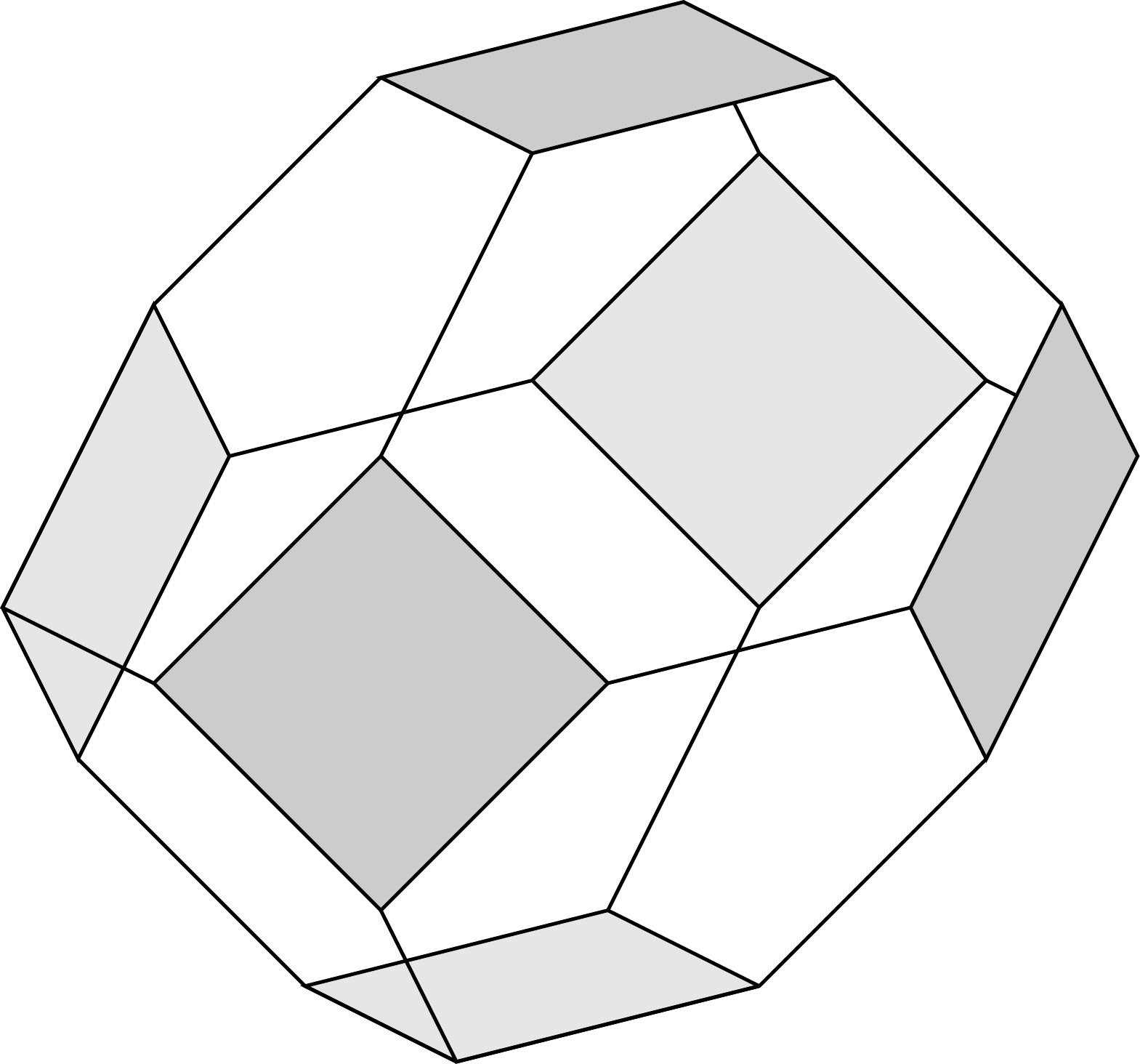}
\end{center}
\caption{The cubical complex $\Sq_3$}\label{pictSq}
\end{figure}

%
%
%
%
%
%

\section{Permutation action and the fundamental polytope}\label{secArrowMatricesBlock}

\begin{defin}
Let $P$ be a simple polytope $\dim P=n$, and $G_1,\ldots,G_s$ be a
collection of its codimension two faces. Let $\F_i', \F_i''$ be
the facets of $P$ such that $\F_i'\cap\F_i''=G_i$. If the sets
$\{\F_i',\F_i''\}$, $i=1,\ldots,s$, are disjoint, we call
$\{G_i\}$ a collection in general position.
\end{defin}

\begin{lem}\label{lemGeneralCut}
If $\{G_1,\ldots,G_s\}$ is a collection in general position, then
a simple combinatorial polytope, obtained by consecutive cutting
of these faces does not depend on the order of cuts.
\end{lem}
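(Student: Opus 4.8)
The plan is to prove the statement by induction on $s$, the number of faces to be cut. For $s=1$ there is nothing to prove: cutting a single codimension-two face $G_1=\F_1'\cap\F_1''$ produces one well-defined simple polytope (the ``codimension-two face cut'' is the standard operation of replacing $G_1$ by a new facet; since $G_1$ is a codimension-two face of a simple polytope it has a standard neighborhood, so the result is well-defined up to combinatorial isomorphism). The content of the lemma is that when $s\geq 2$ the cuts commute.

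The key observation is \emph{locality of a cut}. Cutting $G_i$ only alters the face poset in the closed star of $G_i$: faces disjoint from $G_i$ are untouched, faces containing $G_i$ are modified in a prescribed way, and the new facet together with its faces is glued in along $\partial$ of a neighborhood of $G_i$. Since $\{G_i\}$ is a collection in general position, the facet pairs $\{\F_i',\F_i''\}$ are pairwise disjoint; I would like to upgrade this to the statement that the faces $G_i$ themselves can be chosen with pairwise disjoint closed stars, so that the cuts are performed in disjoint regions and visibly commute. This is not literally true on the nose — two faces $G_i, G_j$ can touch even when the pairs $\{\F_i',\F_i''\}$, $\{\F_j',\F_j''\}$ are disjoint — so the real work is to show that a cut at $G_j$ does not destroy $G_i$ as a codimension-two face and does not change the pair of facets meeting along it. Concretely: if $G_i$ is disjoint from both $\F_j'$ and $\F_j''$, then $G_i$ survives untouched; and if $G_i$ meets, say, $\F_j'$ but (by general position) $G_i\not\subseteq\F_j'$ and $G_i$ is disjoint from $\F_j''$, then after the cut $G_i$ persists as a codimension-two face cut out by the transforms of the same two facets $\F_i',\F_i''$ (neither of which equals $\F_j'$ or $\F_j''$), because the cut at $G_j$ replaces $\F_j'$ by its truncation but leaves all other facets and their intersection pattern outside the star of $G_j$ unchanged.

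Having established that after cutting $G_j$ the remaining collection $\{G_i : i\neq j\}$ (with each $G_i$ possibly replaced by the corresponding face of the truncated polytope) is still a collection in general position in the new polytope, I would argue as follows. To show cutting $G_i$ then $G_j$ gives the same result as cutting $G_j$ then $G_i$: both operations modify the polytope only inside $\St(G_i)\cup\St(G_j)$, and I must check the two orders agree there. If $\St(G_i)$ and $\St(G_j)$ are disjoint this is immediate. If they overlap, the overlap is controlled by general position — $G_i$ and $G_j$ cannot be contained one in the other, and the facet pairs are disjoint — so each cut, restricted to the region of the other's star, is just a truncation of a single facet transverse to that star, and truncations along distinct transverse facets commute by an elementary local computation (one is simply slicing a simple-polytope neighborhood $\Ro_{\geq 0}^n$ by two families of parallel hyperplanes). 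The induction then closes: any permutation is a product of adjacent transpositions, and we have shown adjacent cuts commute.

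The step I expect to be the main obstacle is precisely the bookkeeping in the overlapping-stars case: making rigorous the claim that after one cut the other face and \emph{its} defining pair of facets are combinatorially unchanged, and that the local model near the overlap is genuinely a product situation in which the two truncations commute. This requires a careful description of how the face lattice transforms under a single codimension-two cut (which facets are created, which are truncated, how intersections in the star are rerouted), and then checking that general position is exactly the hypothesis that prevents any interference between the transforms of $G_i$ and the cut at $G_j$. Once the single-cut transformation of the face lattice is written down explicitly, the commutation becomes a finite verification, and the induction is routine.
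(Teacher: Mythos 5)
Your approach works directly on the polytope and tries to establish commutativity of cuts by a locality argument; the paper instead passes to the dual simplicial sphere, where a codimension-two face cut becomes a stellar subdivision of an edge, general position becomes disjointness of the subdivided edges, and independence of the order is then immediate from the definition of stellar subdivision. The dualization collapses precisely the "overlapping stars" case that you correctly flag as the hard part of the primal argument: two edges of the dual sphere sharing a vertex is no obstacle, since a stellar subdivision of one edge does not touch the other edge or its link.

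As written your argument has a genuine gap, and it is the one you yourself identify. You reduce the lemma to the claim that two truncations of a simple-polytope neighborhood along "distinct transverse facets" commute and that the face $G_i$ together with its defining facet pair survives a cut at $G_j$ unchanged, but you do not actually write down the face-lattice transformation of a single cut nor carry out the finite verification in the overlapping case. Without that, the induction does not close. This is not a wrong approach — it can surely be made rigorous — but it is harder than it needs to be: the case analysis near $G_i\cap G_j$ must track how the facets $\F_i',\F_i''$ intersect the new facet created by the cut at $G_j$, whereas in the dual picture this is vacuous. If you want to keep the primal argument, the cleanest route is still to phrase the "local model" step as a statement about links, which is effectively re-deriving the dual picture by hand.
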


\begin{proof}
The statement easily follows from the consideration of the dual
simplicial sphere. Cutting a face of codimension two corresponds
to a stellar subdivision of an edge in a simplicial sphere. The
condition of general position implies that the subdivided edges do
not intersect. The independence of the result follows from the
definition of a stellar subdivision.
\end{proof}

Let $I^n=I_1\times\cdots\times I_n$ be the cube, $I_j=[-1;1]$. The
facets are indexed by the set $W=\{\pm1,\pm2,\ldots,\pm n\}$: the
element $\delta k$, $\delta=\pm1$, encodes the facet
\[
\F_{\delta k}=I_1\times\cdots\times
\stackrel{k}{\{\delta\}}\times\cdots\times I_n.
\]
The group $\Zt^n$ acts on the set of facets: for an element
$\epsilon=(\epsilon_1,\ldots,\epsilon_n)\in\Zt^n$,
$\epsilon=\pm1$, we have $\epsilon \delta k=\epsilon_k\delta k$.

Given the transposition of neighboring elements $\sigma_i=(i,
i+1)\in\Sigma_n$, $i=1,\ldots,n-1$, we denote by $F_{\sigma_i}$
the face of $I^n$ of the form
\[
F_{\sigma_i}=I_1\times\cdots\times \stackrel{i}{\{1\}}\times
\stackrel{i+1}{\{-1\}}\times\cdots\times I_n =
\F_i\cap\F_{-(i+1)}.
\]
Hence $F_{\sigma_i}$ is a face of codimension two.

\begin{defin}
Let $\B^n$ denote a simple polytope obtained from $I^n$ by cutting
all faces $F_{\sigma_i}$, $i=1,\ldots,n-1$.
\end{defin}

Note that the faces $F_{\sigma_i}$ may intersect, however the
result of the cutting is well defined according to Lemma
\ref{lemGeneralCut}.

\begin{figure}[h]
\begin{center}
\includegraphics[scale=0.4]{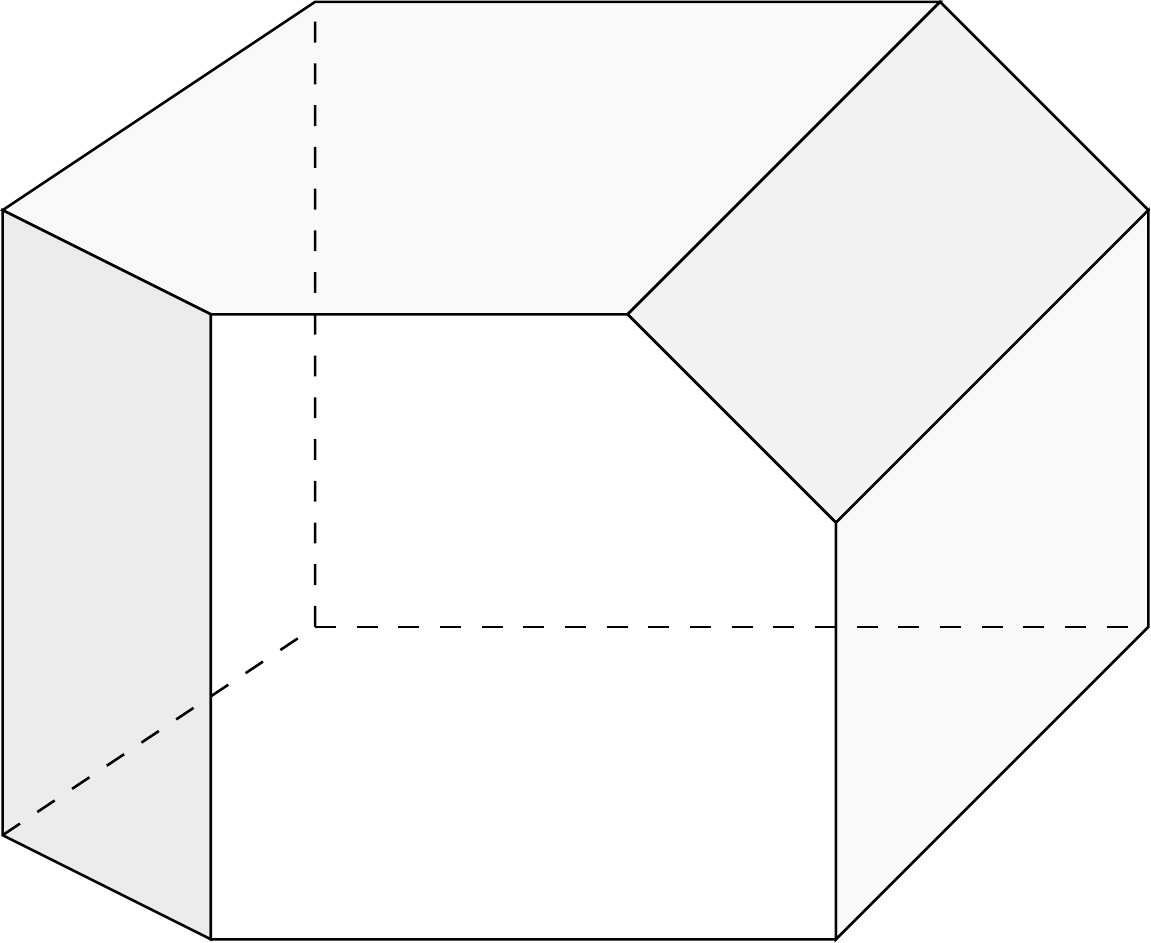}
\end{center}
\caption{The polytope $\B^3$}\label{pictCutCube}
\end{figure}

\begin{ex}
The polytope $\B^3$ is obtained from $I^3$ by cutting two skew
edges, see Fig.\ref{pictCutCube}. This polytope plays an important
role in toric topology, see \cite[Sect.4.9]{BPnew}, though it
emerged under different circumstances.
\end{ex}

The facet of $\B^n$ obtained by cutting $F_{\sigma_i}$ will be
denoted by $\F_{\sigma_i}$. The original facets of the cube remain
the facets of $\B^n$. They will be denoted by the same letters
$\F_{\delta k}$. In total, the polytope $\B^n$ has $3n-1$ facets.

Whenever a codimension two face is cut in a polytope $P^n$, the
resulting facet $F^{n-1}_{cut}$ has combinatorial type of the
product $I^1\times G^{n-2}$, for some polytope $G^{n-2}$. This can
be seen by considering the dual triangulation of a sphere:
whenever an edge $e$ is subdivided by a vertex $v$, the link of
$v$ is the suspension over the link of $e$ in the original
triangulation.

Every facet $F_{\sigma_i}$ of $\B^n$ has the form $I^1\times G_i$.
Let $\alpha_i\colon F_{\sigma_i}\to F_{\sigma_i}$ be the antipodal
map, that is the map which is constant on $G_i$ and antipodal on
the interval $I^1$.

\begin{con}
Note that the permutation group $\Sigma_n$ acts on a star graph.
This action induces the action of $\Sigma_n$ on the space of arrow
matrices which preserves the spectrum. Therefore there is an
action of $\Sigma_n$ on $M_{\St_n,\lambda}$ which commutes with
the torus action up to a natural action of $\Sigma_n$ on $T^n$.
Let $\Nt$ denote the semidirect product
$\Nt_n=T^n\rtimes\Sigma_n$, where $\Sigma_n$ acts on $T^n$ by
permuting the coordinates. The group $\Nt_n$ naturally arises as
the normalizer of the maximal torus $T^n$ in the Lie group $U(n)$.

We have an action of $\Nt_n$ on $M_{\St_n,\lambda}$. The orbit
space $Q^n=M_{\St_n,\lambda}/T^n$ carries the remaining action of
$\Nt/T^n\cong \Sigma_n$. The moment map image
$\ca{R}_n=\bigcup_{\sigma\in\Sigma_n}I^n_\sigma$ carries a natural
action of $\Sigma_n$, which permutes the coordinates (in
particular this action permutes cubes in the union). The map
$\tilde{\mu}\colon Q_n\to \ca{R}_n$ is $\Sigma_n$-equivariant, as
can be easily seen from its definition.

Similar considerations hold true in the real case. The finite
group $\Nt^\Ro_n=\Zt^n\rtimes\Sigma_n$ acts on
$M_{\St_n,\lambda}^{\Ro}$. Note that the group $\Nt^\Ro_n$
coincides with the Weyl group of type $B$.
\end{con}

\begin{prop}
The preimage of a single cube $I^n_\sigma$ of the set
$\ca{R}_n=\bigcup_{\sigma\in\Sigma_n}I^n_\sigma$ under the map
$\tilde{\mu}\colon Q_n\to \ca{R}_n$ is diffeomorphic to the
polytope $\B^n$. The map $\tilde{\mu}\colon
\tilde{\mu}^{-1}(I^n_\sigma)\to I^n_\sigma$ is the map $\B^n\to
I^n$ which blows down the cut faces.

The polytope $\B^n$ is the fundamental domain of the
$\Sigma_n$-action on $Q_n$, $\Nt_n$-action on $M_{\St_n,\lambda}$,
and $\Nt^\Ro_n$-action on $M_{\St_n,\lambda}^{\Ro}$.
\end{prop}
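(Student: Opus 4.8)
The plan is to use the $\Sigma_n$-action to reduce to a single cube, compute the fibres of $\widetilde{\mu}$ over all faces of that cube, and then recognise the resulting manifold with corners as $\B^n$. Since $\Sigma_n$ permutes the cubes $I^n_\sigma$ of $\ca{R}_n=\bigcup_\sigma I^n_\sigma$ freely and transitively and $\widetilde{\mu}$ is $\Sigma_n$-equivariant, it suffices to treat $\sigma=\id$. Fix the affine identification of $I^n_{\id}=I_1\times\cdots\times I_n$, $I_k=[\lambda_{k-1},\lambda_k]$, with the standard cube $I^n=[-1,1]^n$ sending $\{a_k=\lambda_k\}$ to $\F_k$ and $\{a_k=\lambda_{k-1}\}$ to $\F_{-k}$. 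Under this identification the codimension two face $F_{\sigma_i}=\F_i\cap\F_{-(i+1)}$ becomes the locus $\{a_i=a_{i+1}=\lambda_i\}$, i.e. the set of diagonals in which the $i$-th and $(i+1)$-st entries collide at the eigenvalue $\lambda_i$. Write $\ca{M}_{\id}=\widetilde{\mu}^{-1}(I^n_{\id})\subset Q_n$.

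Next I would describe $\widetilde{\mu}^{-1}(a)$ for a point $a$ in the relative interior of a face $F$ of $I^n_{\id}$. By Corollary \ref{corDetOfArrow} and the argument of Proposition \ref{propMomentImage} this fibre is the convex polytope of vectors $(c_1,\dots,c_n)$, $c_i=|b_i|^2\geqslant0$, solving the linear system \eqref{eqPreimageParameters}. Four cases occur. (i) If $a$ is interior to $I^n_{\id}$, the $c_i$ are uniquely determined and positive by the partial fraction expansion, so the fibre is a point. (ii) If $F\subseteq F_{\sigma_i}$, so that $a_i=a_{i+1}=\lambda_i$ on $F$, then in every equation of \eqref{eqPreimageParameters} the unknowns $c_i,c_{i+1}$ enter only through the sum $c_i+c_{i+1}$, which equals a fixed positive constant (the residue of the associated reducible fraction, exactly as in the proof of Theorem \ref{thmIsSmooth}), so this pair of coordinates contributes a free interval. (iii) If $F\subseteq F_{\sigma_i}$ for several indices $i$, these indices are pairwise non-adjacent (otherwise $F$ would lie in $\{a_{i+1}=\lambda_i\}\cap\{a_{i+1}=\lambda_{i+1}\}=\varnothing$), so the corresponding blocks of \eqref{eqPreimageParameters} decouple and the fibre is the product of the intervals. (iv) A coordinate $a_k$ hitting $\lambda_{k-1}$ or $\lambda_k$ without such a partner merely forces $b_k=0$ and leaves the fibre dimension unchanged. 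Hence $\widetilde{\mu}^{-1}(a)$ is a cube of dimension $\#\{i\mid F\subseteq F_{\sigma_i}\}$: the $2n$ facets over $\partial I^n$ survive (the map is a bijection over their relative interiors), while over each $F_{\sigma_i}$ we get a new $(n-1)$-dimensional collision facet, an interval bundle over $F_{\sigma_i}$.

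Now I would match $\ca{M}_{\id}$ with $\B^n$. The collection $\{F_{\sigma_i}\}$ is in general position (the facet pairs $\{\F_i,\F_{-(i+1)}\}$ are pairwise disjoint), so by Lemma \ref{lemGeneralCut} the polytope $\B^n$ and its blow-down $\pi\colon\B^n\to I^n$ are well defined, with $\pi^{-1}(a)$ again a cube of dimension $\#\{i\mid F\subseteq F_{\sigma_i}\}$ for $a\in\relint F$. Off $\widetilde{\mu}^{-1}(\bigcup_i F_{\sigma_i})$ both $\widetilde{\mu}$ and $\pi$ are diffeomorphisms of manifolds with corners onto the same open subset of $I^n$, so I glue the identity there; near $F_{\sigma_i}$ the normalised residue coordinate $\theta_i=c_i/(c_i+c_{i+1})$ is a well-defined smooth function on a neighbourhood of $\widetilde{\mu}^{-1}(F_{\sigma_i})$ (since $c_i+c_{i+1}$ stays bounded away from $0$ there), and together with the base coordinates it identifies that neighbourhood with the standard local model of $\B^n$ at the cut facet over $F_{\sigma_i}$. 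General position ensures these charts cover $\widetilde{\mu}^{-1}(\bigcup_i F_{\sigma_i})$ with pairwise disjoint ``collision directions'', so they patch to a diffeomorphism $\B^n\toiso\ca{M}_{\id}$ intertwining $\pi$ with $\widetilde{\mu}$; this is precisely the first two assertions. The essential difficulty — and where I expect the real work to lie — is this last gluing: one must check that the smooth structure $\ca{M}_{\id}$ inherits from $M_{\St_n,\lambda}$ near a collision face is genuinely the corner-cut model, i.e. that as $(a_i,a_{i+1})\to(\lambda_i,\lambda_i)$ from inside the cube the pair $(c_i,c_{i+1})$ sweeps out the blow-up of the corner linearly in $\theta_i$. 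This requires a local computation with the defining equations $P_j=0$ of Theorem \ref{thmIsSmooth}, not merely with their vanishing loci.

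Finally, for the fundamental domain claims: $\{I^n_\sigma\}_{\sigma\in\Sigma_n}$ is a free transitive $\Sigma_n$-set, $\widetilde{\mu}$ is equivariant, $\sigma\cdot\ca{M}_{\id}=\ca{M}_\sigma=\widetilde{\mu}^{-1}(I^n_\sigma)$, and $\ca{M}_\sigma\cap\ca{M}_{\sigma'}=\widetilde{\mu}^{-1}(I^n_\sigma\cap I^n_{\sigma'})$ lies over a face of $\ca{R}_n$ of positive codimension; concretely $\ca{M}_{\id}\cap\ca{M}_{\sigma_i}$ is the collision facet over $F_{\sigma_i}$, on which the transposition $\sigma_i=(i,i+1)$ acts as the flip $\alpha_i$. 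Thus the $n!$ copies $\ca{M}_\sigma\cong\B^n$ tile $Q_n$ along the facets $\F_{\sigma_i}$, so $\B^n\cong\ca{M}_{\id}$ is a fundamental domain for the $\Sigma_n$-action on $Q_n$ and $Q_n/\Sigma_n\cong\B^n$. Since $M_{\St_n,\lambda}/T^n=Q_n$ and $\Nt_n=T^n\rtimes\Sigma_n$, this yields $M_{\St_n,\lambda}/\Nt_n\cong\B^n$; the real case is entirely analogous, with $\Zt^n$ in place of $T^n$ and $\Nt^\Ro_n$ the Weyl group of type $B_n$.
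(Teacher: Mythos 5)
Your proof is correct and follows essentially the same route as the paper's: restrict to $I^n_{\id}$ by $\Sigma_n$-equivariance, show the fibre of $\tilde\mu$ over an interior point is a point and over a point on $F_{\sigma_i}$ is an interval (product of intervals for several pairwise non-adjacent collisions, via Remark \ref{remMomAngCubes}), and recognise the total space as $\B^n$ with $\tilde\mu$ the blow-down. You supply considerably more detail than the paper --- in particular the explicit $\theta_i=c_i/(c_i+c_{i+1})$ coordinate near a collision facet and the tiling argument for the fundamental-domain claims, both of which the paper leaves implicit --- so this is a fuller account of the same argument rather than a different one.
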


\begin{proof}
Without loss of generality, consider a single cube
$I^n_{\id}=I_1\times\cdots\times I_n$ corresponding to the trivial
permutation $\id\in\Sigma$. For the points in this cube we have
$a_1\leqslant a_2\leqslant\cdots\leqslant a_n$. As follows from
Section \ref{secArrowMatricesOrbits} (see remark
\ref{remMomAngCubes}) the preimage $\tilde{\mu}^{-1}(x)$ consists
of a single point for generic $x\in I^n_{\id}$. If a collision
$a_j=a_{j+1}$ occurs for a point $x$, this means that $x$ lies on
a codimension-two face $F_{\sigma_i}$ of a cube. It was noted in
remark~\ref{remMomAngCubes}, that in this case the preimage
$\tilde{\mu}^{-1}(x)$ is a cube of dimension equal to the number
of pairwise collisions. Therefore the preimage
$\tilde{\mu}^{-1}(I^n_{\id})$ is given by blowing up the cube at
the faces $F_{\sigma_1},\ldots,F_{\sigma_{n-1}}$.
\end{proof}

The orbit space $Q_n$ can be represented as the union of $n!$
copies of the polytope $\B^n$ attached along the cut faces. More
precisely, we have
\[
Q_n=\bigcup_{\sigma\in\Sigma_n}\B^n_\sigma/\sim,\quad
\B^n_\sigma\cong \B^n,
\]
The relation $\sim$ identifies the point $x\in\F_{\sigma_i}$ of
the polytope $\B^n_\sigma$ with the point
$\alpha_i(x)\in\F_{\sigma_i}$ of the polytope $\B^n_\tau$ whenever
$\sigma=\tau\sigma_i$. Recall that $\alpha_i$ is the antipodal
involution of the facet~$\F_{\sigma_i}$. One should not forget
about this involution: when passing from $\B^n_\sigma$ to
$\B^n_\tau$ the barycentric coordinates $|b_{i}|^2,|b_{i+1}^2|$ on
the blown up face interchange.

\begin{ex}\label{exCase3orbitSpace}
In case $n=3$ the space $Q_3$ is obtained by stacking 6 copies of
the polytope $\B^3$ shown on Fig.\ref{pictCutCube}. The result of
this stacking is shown on Fig.\ref{pictStackedCubes}. It can be
seen that $Q_3$ is a solid torus and its boundary is subdivided
into hexagons in a regular way. Note that the stacking does not
produce additional faces, so the picture should be smoothened at
stack points. This is somehow similar to the construction of
origami templates in the theory of toric origami manifolds (see
\cite{AMPZ}).
\end{ex}

\begin{figure}[h]
\begin{center}
\includegraphics[scale=0.2]{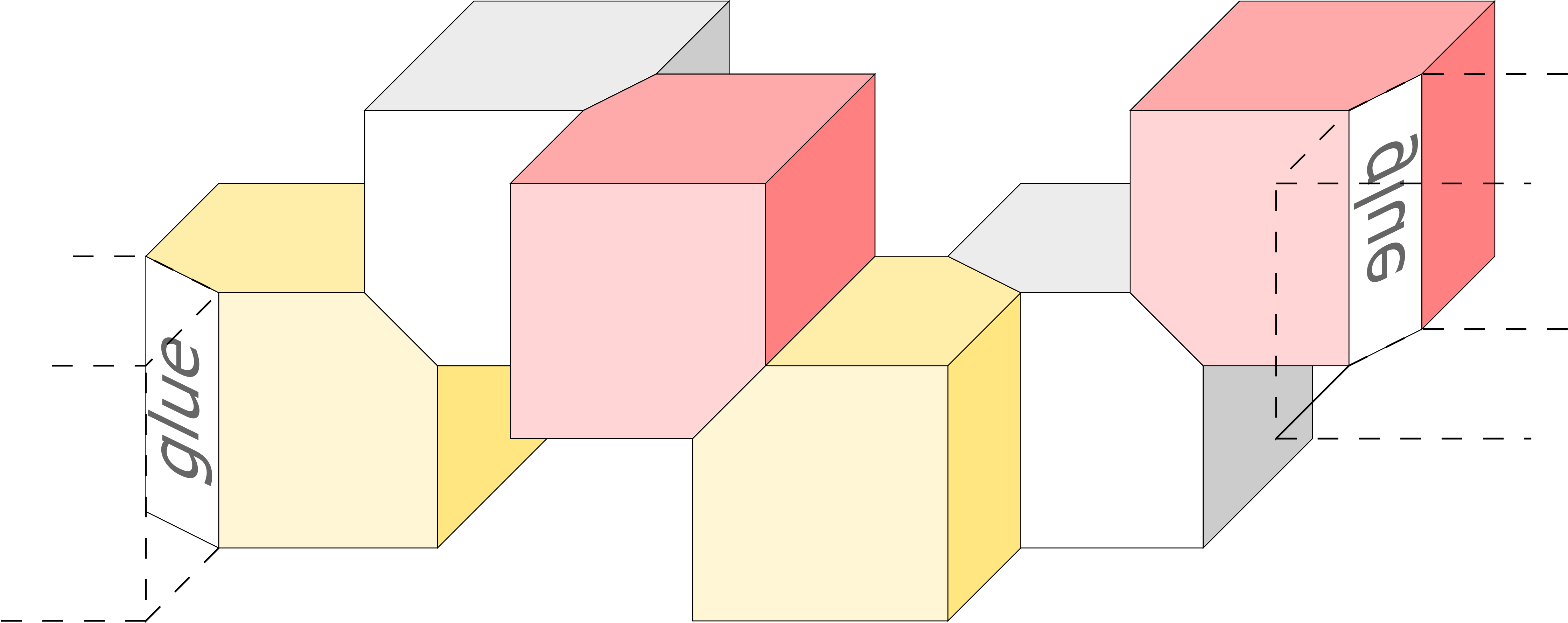}
\end{center}
\caption{Reconstruction of $Q_3$ by stacking 6 copies of
$\B^3$.}\label{pictStackedCubes}
\end{figure}

The manifold $M_{\St_n,\lambda}^\Ro$ is a small cover over
$Q_n=M_{\St_n,\lambda}^\Ro/\Zt^n$ in which all stabilizers of the
action $\Zt^n\circlearrowright M_{\St_n,\lambda}^\Ro$ are the
coordinate subgroups of $\Zt^n$. The constructed cellular
subdivision on $Q_n$ can be lifted to $M_{\St_n,\lambda}^\Ro$, and
we have the following statement.

\begin{thm}
The manifold $M_{\St_n,\lambda}^\Ro$ is canonically subdivided
into $2^nn!$ cells combinatorially isomorphic to $\B^n$. More
precisely,
\[
M_{\St_n,\lambda}^\Ro=\bigcup_{\sigma\in\Sigma_n,
\epsilon\in\Zt^n}\B^n_{\sigma,\epsilon}/\sim,\quad
\B^n_{\sigma,\epsilon}\cong \B^n,
\]
where the equivalence relation $\sim$ is generated by the
relations
\begin{itemize}
\item A point $x\in\F_{\sigma_i}\subset \B^n_{\sigma,\epsilon}$
is identified with the point
$\alpha_i(x)\in\F_{\sigma_i}\subset\B^n_{\tau,\epsilon}$ if
$\sigma\tau^{-1}=\sigma_i$.
\item A point $x\in\F_{\delta k}\subset\B^n_{\sigma,\epsilon_1}$ is
identified with the point $x\in\F_{\delta
k}\subset\B^n_{\sigma,\epsilon_2}$ if
$\epsilon_1\epsilon_2^{-1}=(+1,\ldots,+1,\stackrel{k}{-1},+1,\ldots,+1)$.
\end{itemize}
Each codimension $s$ cell of the cellular structure on
$M_{\St_n,\lambda}^\Ro$ lies in exactly $2^{n-s}$ top-dimensional
cells.
\end{thm}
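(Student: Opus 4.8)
The plan is to build the cellular subdivision of $M^{\Ro}_{\St_n,\lambda}$ by lifting, along the covering $\pi\colon M^{\Ro}_{\St_n,\lambda}\to Q_n$, the subdivision of $Q_n$ into the $n!$ copies $\B^n_\sigma$ that was established in the previous Proposition. Since $M^{\Ro}_{\St_n,\lambda}$ is a small cover over $Q_n$ with characteristic function sending the facet $\F_{\delta k}$ and the cut facet $\F_{\sigma_i}$ to the appropriate coordinate vectors of $\Zt^n$ (the cut facets $\F_{\sigma_i}$ carry the characteristic value $e_i$, matching the fact that the blown-up interval is parametrized by $|b_i|^2,|b_{i+1}|^2$ and the corresponding real off-diagonal entries change sign under the residual $\Zt$), the preimage $\pi^{-1}(\B^n_\sigma)$ of a single block is a disjoint union of $2^n$ closed regions, one for each $\epsilon\in\Zt^n$; I would first check that each such region maps homeomorphically onto $\B^n_\sigma$ and is therefore combinatorially a copy of $\B^n$, which I label $\B^n_{\sigma,\epsilon}$. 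This gives the count $2^n n!$ of top cells immediately.

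Next I would identify the gluing. Two things can happen along a facet of a copy $\B^n_{\sigma,\epsilon}$: the facet is a cut facet $\F_{\sigma_i}$, or it is an original cube facet $\F_{\delta k}$. In the first case the facet lies in the interior of $Q_n$ (it is where a collision $a_i=a_{i+1}$ occurs, hence an interior point of the orbit space), so the two blocks of $Q_n$ meeting there are $\B^n_\sigma$ and $\B^n_\tau$ with $\sigma\tau^{-1}=\sigma_i$, glued via the antipodal map $\alpha_i$; lifting, and using that the $\Zt^n$-action is free away from the boundary strata dictated by the characteristic function, one sees the lift glues $\B^n_{\sigma,\epsilon}$ to $\B^n_{\tau,\epsilon}$ by $\alpha_i$ with the \emph{same} $\epsilon$ — this is exactly because the characteristic vector of $\F_{\sigma_i}$ is $e_i$, and crossing that facet in $Q_n$ corresponds to no change in the $\epsilon$-label (the sign information is already recorded in which of $b_i,b_{i+1}$ is which, i.e.\ in the $\alpha_i$). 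In the second case the facet $\F_{\delta k}$ lies on $\dd Q_n$, so it gets two lifts in $M^{\Ro}_{\St_n,\lambda}$ that are swapped by the $k$-th coordinate reflection; this glues $\B^n_{\sigma,\epsilon_1}$ to $\B^n_{\sigma,\epsilon_2}$ along $\F_{\delta k}$ by the identity exactly when $\epsilon_1\epsilon_2^{-1}=(+1,\dots,\stackrel{k}{-1},\dots,+1)$. These are precisely the two families of relations in the statement, so the union with $\sim$ recovers $M^{\Ro}_{\St_n,\lambda}$.

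Finally, for the incidence count: a codimension-$s$ cell of this structure is the lift of a codimension-$s$ face $G$ of some block; in $Q_n$, a codimension-$s$ face of the subdivision corresponds, under $\tilde\mu$, to a face of $\ca{R}_n$, i.e.\ to choosing $s$ of the coordinates and constraining each either to an endpoint $\lambda_j$ (a ``boundary'' constraint, contributing a characteristic vector $e_k$) or to a collision hyperplane $a_i=a_{i+1}$ (an ``interior'' cut constraint, also contributing $e_i$). In either event, each of the $s$ constraints pins down exactly one coordinate of $\Zt^n$ when passing to the cover, while the remaining $n-s$ coordinates are free; hence the cell lifts to $2^{n-s}$ copies over $M^{\Ro}_{\St_n,\lambda}$ — equivalently, it lies in exactly $2^{n-s}$ top cells. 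I expect the main obstacle to be the careful bookkeeping in the cut-facet case: verifying that the residual $\Zt$ acting on the blown-up interval matches the reflection in the $i$-th coordinate of $\Zt^n$ (rather than, say, in the $(i+1)$-st), and that consequently no extra $\epsilon$-shift appears in the first family of relations. Once the characteristic function of the small cover $Q_n$ is pinned down correctly on the cut facets, everything else is a routine lift of the subdivision from the previous Proposition.
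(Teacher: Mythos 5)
The paper itself offers no proof of this theorem --- it only remarks that the subdivision of $Q_n$ ``can be lifted'' along the small-cover projection --- so your strategy of lifting the $\B^n_\sigma$-decomposition along $\pi\colon M^{\Ro}_{\St_n,\lambda}\to Q_n$ and tracking the two families of gluings is exactly the intended argument. The counting of $2^nn!$ top cells, the identification of the two facet types, and the observation that cut-facet gluings preserve $\epsilon$ while boundary-facet gluings flip a coordinate of $\epsilon$ are all sound.

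The final incidence count, however, contains a genuine error. You assert that a cut constraint $a_i=a_{i+1}$ ``also contributes $e_i$,'' i.e.\ that the facet $\F_{\sigma_i}$ carries a characteristic value and pins down a $\Zt^n$-coordinate. It does not: $\F_{\sigma_i}$ lies in the \emph{interior} of the manifold with corners $Q_n$ (it is where two blocks $\B^n_\sigma,\B^n_\tau$ are glued, not a facet of $Q_n$), and the $\Zt^n$-action there is free since all $b_j$ are nonzero at such points. Only the $\F_{\delta k}$-type facets, which lie on $\dd Q_n$, contribute stabilizer directions. Consequently a codim-$s$ face involving $j$ cut facets has $2^{n-s+j}$ lifts, not $2^{n-s}$; and in any case ``lifts to $k$ copies'' is not the same quantity as ``lies in $k$ top cells'' (one is a fiber count, the other a local adjacency count, and equating them is a non sequitur). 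Computing the adjacency properly --- either locally, since near a codim-$s$ cell $\B^n$ is a simple polytope and the $s$ transverse facet hypersurfaces split a neighbourhood into $2^s$ orthants, or by counting incidences: $2^j\cdot 2^n$ pairs over the base face divided by $2^{n-s+j}$ lifts --- gives $2^s$ top cells, not $2^{n-s}$. The case $n=1$ already shows this: $M^{\Ro}_{\St_1,\lambda}\cong S^1$ is two arcs glued at both endpoints, and each vertex (codimension $1$) lies in $2=2^1$ arcs, whereas $2^{n-s}=1$. So the printed exponent $n-s$ appears to be a slip for $s$; your argument reproduces the printed formula, but only by way of the incorrect claim about the characteristic value on $\F_{\sigma_i}$.
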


\section{Combinatorics of strata}\label{secTreeMatricesCombinatorics}

In this section $\Gamma$ is an arbitrary graph on the set $V$,
$|V|=n$, $\Delta=(\Gamma,a,b)$ is a labeled graph, $A_\Delta$ its
corresponding Hermitian matrix, and $M_{\Gamma,\lambda}$ is the
space of $\Gamma$-shaped matrices with the simple spectrum
$\lambda$.

The orbit space $M_{\Gamma,\lambda}/T^n$ is stratified by
dimensions of torus orbits. Suppose an orbit $[A]\in
M_{\Gamma,\lambda}/T^n$ is represented by a matrix $A$ such that
$b_e=0$, $e\in W$ for some set $W$ of edges. Discarding the edges
of $W$ from the graph $\Gamma$, we obtain a new graph
$\tilde{\Gamma}$ on the same vertex set $V$. Let $k$ be the number
of connected components of $\tilde{\Gamma}$. It can be seen that
the dimension of the orbit $T^nA$ is $n-k$. The set of matrices
having zeroes at positions $e\in W$ in general may be
disconnected, since the spectrum $\lambda$ can distribute between
blocks of a matrix in different ways. These considerations
motivate the following definition.

\begin{defin}
Let $\Gamma$ be a graph on a vertex set $V$, $|V|=n$. Let
$\tilde{\Gamma}\subset \Gamma$ be a subgraph on $V$, and
$V_1,\ldots,V_k$ are the vertex sets of the connected components
of $\tilde{\Gamma}$. Consider $\ca{V}=\{V_1,\ldots,V_k\}$, the
unordered partition of $V$. We say that two bijections
$p_1,p_2\colon V\to[n]$ are \emph{equivalent with respect to}
$\ca{V}$ (or simply $\ca{V}$-\emph{equivalent}), if they differ by
permutations within each of the subsets $V_i$, that is
\[
p_1=p_2\cdot\sigma,\quad \sigma\in
\Sigma_{V_1}\times\cdots\times\Sigma_{V_k}\subseteq\Sigma_V.
\]
The class of $\ca{V}$-equivalent bijections will be called a
\emph{cluster}, submitted to the partition $\ca{V}$. Let
$\ca{P}_\Gamma$ be the set of all clusters submitted to partitions
of subgraphs $\tilde{\Gamma}\subset\Gamma$ into connected
components. Now we define a partial order on $\ca{P}_\Gamma$. Note
that the inclusion of subgraphs $\tilde{\Gamma}'\subset
\tilde{\Gamma}$ implies that the partition $\tilde{\ca{V}}'$ is a
refinement of $\tilde{\ca{V}}$. We say that a cluster $[p']$
submitted to $\tilde{\ca{V}}'$ is less than a cluster $[p]$
submitted to $\tilde{\ca{V}}$ if $p$ and $p'$ are
$\ca{V}$-equivalent. In other words, $p'<p$, if $p'$ is a
refinement of $p$. The poset $\ca{P}_\Gamma$ is called a
\emph{cluster-permutohedron} corresponding to $\Gamma$.
\end{defin}

\begin{rem}
A cluster-permutohedron is a graded poset: the rank of a cluster
submitted to a subgraph $\tilde{\Gamma}$ equals the rank of this
subgraph, i.e. the number of edges in its spanning forest.
Therefore, the rank is equal to the number of vertices of
$\tilde{\Gamma}$ minus the number of its connected components.
\end{rem}

Note that the poset $\ca{P}_\Gamma$ has a unique maximal element
of rank $n-1$, represented by the cluster submitted to the whole
graph $\Gamma$. The elements in this cluster can be permuted
arbitrarily. A smallest element of the poset $\ca{P}_\Gamma$
corresponds to a partition of the vertex set into $n$ singletons:
such clusters are encoded by all possible permutations $p\colon
V\to[n]$. This means $\ca{P}_\Gamma$ has exactly $n!$ atoms for
any $\Gamma$.

\begin{ex}
Let $\Gamma$ be a path graph with edges $(1,2),
(2,3),\ldots,(n-1,n)$. Its subgraph is represented by a sequence
of path graphs, and the corresponding partition of the vertex set
has the form
\[
\ca{V}=\{\{1,\ldots,s_1\},\{s_1+1,\ldots,s_2\},\ldots,\{s_k+1,\ldots,n\}\},
\]
where $1\leqslant s_1<s_2<\cdots<s_k<n$. A cluster submitted to
this partition has the form
\[
(\{\sigma(1),\ldots,\sigma(s_1)\},\{\sigma(s_1+1),\ldots,\sigma(s_2)\},
\ldots,\{\sigma(s_k+1),\ldots,\sigma(n)\}),
\]
which is a linearly ordered partition of $V=[n]$. Therefore, the
cluster-permutohedron in this case coincides with the poset of
faces of a permutohedron, see Construction \ref{consCombPermut}.
\end{ex}

\begin{ex}
Let $\Gamma$ be a simple cycle on $[n]$, that is the graph with
edges $(1,2)$, $(2,3)$, $\ldots$, $(n-1,n)$, $(n,1)$. In this case
clusters submitted to partitions into connected components are the
cyclically ordered partitions of $[n]$. The poset of such
cyclically ordered partitions is called \emph{cyclopermutohedron};
it was introduced and studied in the works of Panina,
see~\cite{Panina}.
\end{ex}

We formulate several basic properties of general
cluster-permutohedra.

\begin{prop}\label{propIdealInClusterPerm}
Let $p\in \ca{P}_\Gamma$ be an element of the
cluster-permutohedron, submitted to a partition
$\ca{V}=\{V_1,\ldots,V_k\}$ of the vertex set of $\Gamma$, and let
$\Gamma_i$ be the full subgraph of $\Gamma$ on the set $V_i$. Then
the lower order ideal
\[
(\ca{P}_\Gamma)_{\leqslant p}=\{q\in \ca{P}_\Gamma\mid q<p\}
\]
is isomorphic to the direct product of posets
\[
\ca{P}_{\Gamma_1}\times \cdots \times \ca{P}_{\Gamma_k}
\]
\end{prop}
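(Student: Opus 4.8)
The plan is to construct an explicit isomorphism of posets by unwinding the definitions on both sides. First I would recall what an element $q < p$ looks like: it is a cluster submitted to some partition $\tilde{\ca{V}}'$ arising from a subgraph $\tilde\Gamma \subset \Gamma$, such that the underlying bijection is a refinement of $p$. The key combinatorial observation is that if $q$ refines $p$ (submitted to $\ca{V} = \{V_1,\ldots,V_k\}$), then each block of $\tilde{\ca{V}}'$ must be contained in one of the $V_i$; consequently the subgraph $\tilde\Gamma$ can be taken to lie entirely inside $\bigsqcup_i \Gamma_i$ (any edge of $\tilde\Gamma$ joining two distinct blocks of $\ca{V}$ is irrelevant, since it is not an edge of the full subgraphs $\Gamma_i$ and does not affect the partition into components when $q < p$). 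This is the heart of the matter and the step I expect to require the most care: one must check that the presence or absence of ``cross-block'' edges in the chosen subgraph $\tilde\Gamma$ does not change the cluster $q$, so that a cluster below $p$ is faithfully recorded by the data of a subgraph of each $\Gamma_i$ together with an induced cluster on the vertices of $\Gamma_i$.

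Granting that reduction, I would define the map
\[
\Phi\colon (\ca{P}_\Gamma)_{\leqslant p}\to \ca{P}_{\Gamma_1}\times\cdots\times\ca{P}_{\Gamma_k}
\]
as follows. Given $q < p$, represented by a bijection $p'\colon V\to[n]$ that refines $p$, restrict $p'$ to each $V_i$; composing with an order-preserving relabeling $V_i \to [|V_i|]$ (or simply remembering the induced linear/relative order on $p'(V_i)$), we obtain a bijection $p'_i\colon V_i \to [|V_i|]$, and the partition $\tilde{\ca{V}}'$ restricted to $V_i$ is exactly the partition of $V_i$ into connected components of $\tilde\Gamma \cap \Gamma_i$. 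Hence $[p'_i]$ is a well-defined element of $\ca{P}_{\Gamma_i}$, and we set $\Phi(q) = ([p'_1],\ldots,[p'_k])$. One checks this is independent of the chosen representative $p'$ (two $\ca{V}'$-equivalent representatives restrict to $\ca{V}'$-equivalent representatives on each $V_i$, since $\Sigma_{\ca{V}'} \subseteq \Sigma_{V_1}\times\cdots\times\Sigma_{V_k}$ because each block of $\ca{V}'$ sits inside some $V_i$).

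The inverse map is the obvious one: given clusters $[p'_i] \in \ca{P}_{\Gamma_i}$ submitted to partitions $\ca{V}'_i$ of $V_i$ arising from subgraphs $\tilde\Gamma_i \subset \Gamma_i$, form $\tilde\Gamma = \bigsqcup_i \tilde\Gamma_i \subset \Gamma$ and assemble a bijection $p'\colon V \to [n]$ that on each $V_i$ restricts (up to the fixed relabeling) to $p'_i$ and that refines $p$; the collection $\{\ca{V}'_i\}$ glues to the partition $\ca{V}'$ of $V$ into components of $\tilde\Gamma$, so $[p']$ lies in $(\ca{P}_\Gamma)_{\leqslant p}$. To see $\Phi$ is order-preserving in both directions, note that $q_1 < q_2$ in $(\ca{P}_\Gamma)_{\leqslant p}$ means the bijection of $q_1$ refines that of $q_2$; restricting to each $V_i$ preserves the refinement relation, giving $[p'_{1,i}] \leqslant [p'_{2,i}]$ in $\ca{P}_{\Gamma_i}$, and conversely componentwise refinement on all $V_i$ assembles to global refinement since the blocks never cross between the $V_i$. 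Finally I would cross-check the ranks: the rank of $q$ is $\sum_i (\text{edges in a spanning forest of } \tilde\Gamma\cap\Gamma_i) = \sum_i \operatorname{rk}(\Phi(q)_i)$, matching the rank in the product poset, which confirms that $\Phi$ is indeed a graded poset isomorphism and completes the proof.
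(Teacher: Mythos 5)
Your proof is correct and is exactly the ``straightforward unwinding of the construction'' that the paper invokes in a single sentence; the explicit map $\Phi$ (restriction to the blocks $V_i$), its inverse (disjoint union of subgraphs and reassembly of the bijection), and the order-compatibility checks are precisely what the paper leaves to the reader. One small remark: in your parenthetical about cross-block edges, such edges cannot even occur in a subgraph $\tilde\Gamma'$ whose component partition refines $\ca{V}$ (an edge between $V_i$ and $V_j$ would force a block of $\ca{V}'$ to meet both), so the stronger statement holds and the issue you flag as ``requiring the most care'' is actually automatic.
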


The proof is straightforward from the construction of a partial
order. Recall the basic definitions from the theory of posets.

\begin{defin}
The poset $S$ is called \emph{simplicial} if it has a unique
minimal element $\hat{0}\subset S$, and for each $I\in S$ the
lower order ideal $S_{\leqslant I}=\{J\in S\mid J\leqslant I\}$ is
isomorphic to a boolean lattice. The elements of a simplicial
poset are called \emph{simplices}. If every two simplices $I,J\in
S$ have unique common lower bound, then $S$ is called a
\emph{simplicial complex}.
\end{defin}

A poset $S$ will be called \emph{simple} (or dually simplicial) if
$S^*$ is a simplicial poset. Here $S^*$ is the set $S$ with the
order reversed. By the definition of a simplicial complex, each
simplex is uniquely determined by its set of vertices. A
simplicial complex $S$ is called flag, if, whenever a collection
$\sigma$ of vertices is pairwise connected by edges, then $\sigma$
is a simplex of $K$.

\begin{prop}
If $\Gamma$ is a tree, then the poset $\ca{P}_\Gamma$ is simple.
Its dual $\ca{P}_\Gamma^*$ is a flag simplicial complex.
\end{prop}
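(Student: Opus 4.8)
The plan is to verify directly that when $\Gamma$ is a tree, the reversed poset $\ca{P}_\Gamma^*$ has the defining properties of a flag simplicial complex; the ``simple'' claim for $\ca{P}_\Gamma$ is then a restatement. First I would identify, for each cluster $p\in\ca{P}_\Gamma$ submitted to a partition $\ca{V}=\{V_1,\ldots,V_k\}$ coming from a subforest $\tilde{\Gamma}\subset\Gamma$, exactly which edges of $\Gamma$ are ``used'': since $\Gamma$ is a tree, each connected component of $\tilde{\Gamma}$ on a vertex set $V_i$ is itself a tree, hence is the full subgraph $\Gamma_i$ of $\Gamma$ on $V_i$ and is spanned by a unique edge set, namely all edges of $\Gamma$ with both endpoints in $V_i$. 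Thus a cluster of $\ca{P}_\Gamma$ is equivalent data to: a choice of a subforest $\tilde{\Gamma}$ of $\Gamma$ (equivalently, a subset of the edge set $E$ of $\Gamma$, since any edge subset of a tree is a forest), together with the orbit of a bijection $V\to[n]$ under $\prod_i\Sigma_{V_i}$. The rank of $p$ equals $|E(\tilde{\Gamma})|$ by the grading remark.

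Next I would dualize. In $\ca{P}_\Gamma^*$ the order is reversed, so the minimal elements of $\ca{P}_\Gamma^*$ are the $n!$ atoms of $\ca{P}_\Gamma$ (the full permutations $V\to[n]$, associated to the empty subforest), and the unique minimal element $\hat 0$ required of a simplicial poset is supplied by the \emph{maximal} element of $\ca{P}_\Gamma$ — the cluster submitted to all of $\Gamma$. Hence the key point is: for a fixed cluster $p$ submitted to $\ca{V}=\{V_1,\ldots,V_k\}$, the \emph{upper} order ideal $(\ca{P}_\Gamma)_{\geqslant p}$, which becomes the lower ideal $(\ca{P}_\Gamma^*)_{\leqslant p}$, is a boolean lattice. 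An element $q\geqslant p$ corresponds to a subforest $\tilde{\Gamma}_q\supseteq\tilde{\Gamma}_p$ whose components refine-dually, i.e. coarsen, the partition $\ca{V}$, together with a compatible bijection class; I would show that $q$ is determined by the set of edges of $\Gamma$ \emph{outside} $\tilde{\Gamma}_p$ that one adds, and that \emph{any} subset of those edges yields a valid $q$ (adding edges to a forest in a tree always gives a forest, and the bijection class is forced since passing up only merges blocks and the merged class is the image). So $(\ca{P}_\Gamma)_{\geqslant p}\cong 2^{E\setminus E(\tilde{\Gamma}_p)}$ as a poset, a boolean lattice; this establishes that $\ca{P}_\Gamma^*$ is a simplicial poset, with the vertices of the simplex $p$ being the edges of $E\setminus E(\tilde\Gamma_p)$. (Equivalently, using Proposition~\ref{propIdealInClusterPerm}, the lower ideal below $p$ splits as $\ca{P}_{\Gamma_1}\times\cdots\times\ca{P}_{\Gamma_k}$, and the same argument localizes to each factor.)

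Then I would check the two remaining conditions. For ``simplicial complex'' I must show any two simplices of $\ca{P}_\Gamma^*$ — i.e. any two clusters $p,p'$ — have a unique common lower bound in $\ca{P}_\Gamma^*$, i.e. a unique common \emph{upper} bound in $\ca{P}_\Gamma$, if one exists; by the edge-set description, $p$ and $p'$ correspond to subforests $\tilde\Gamma_p,\tilde\Gamma_{p'}$ with bijection classes, and the candidate join uses the edge set $E(\tilde\Gamma_p)\cup E(\tilde\Gamma_{p'})$ with the common refinement of the two bijection classes — which is well-defined and unique precisely because $\Gamma$ being a tree forces the bijection data on a component to be the induced one, so compatibility is automatic whenever an upper bound exists at all. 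For \emph{flagness} I would argue: if clusters $e_1,\ldots,e_m$ are pairwise joinable (pairwise share an upper bound), then since each $e_i$ as a simplex is the edge set $E\setminus E(\tilde\Gamma_{e_i})$ and pairwise joinability of these subsets of $E$ of a tree has no obstruction (any union of edge subsets of a tree is again a forest), their total union also defines a subforest, hence a cluster that is a common upper bound of all the $e_i$; so the collection spans a simplex. The main obstacle, and the step deserving the most care, is the bookkeeping of the bijection classes under ``passing up'' in $\ca{P}_\Gamma$ — making precise that coarsening the partition sends a $\prod_i\Sigma_{V_i}$-orbit to a unique well-defined $\prod_j\Sigma_{W_j}$-orbit, so that the edge-subset description is a genuine poset isomorphism and not merely a bijection; once that is nailed down, simpliciality, the simplicial-complex property, and flagness all follow from the elementary fact that every subset of the edge set of a tree is a forest.
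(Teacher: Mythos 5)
Your proposal follows the paper's approach exactly: the core step in both is identifying the upper order ideal $(\ca{P}_\Gamma)_{\geqslant p}$ with the boolean lattice on the complementary edge set $E\setminus E(\tilde\Gamma_p)$, from which simpliciality of $\ca{P}_\Gamma^*$ follows, and the simplicial-complex and flag properties are then asserted on the same combinatorial grounds (the paper says they ``hold for similar reasons''). One sharpening worth making in your last paragraph: flagness hinges not merely on edge subsets of a tree being forests, but on the fact that the edge cuts of a tree form a laminar (nested) family of bipartitions of $V$, which is what guarantees that pairwise-compatible bijection classes across several cuts always admit a common refinement.
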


\begin{proof}
Let an element $p\in\ca{P}_\Gamma$ be submitted to a partition
$\ca{V}=\{V_1,\ldots,V_k\}$. This partition defines a forest
$\tilde{\Gamma}\subset\Gamma$ uniquely. Let
$\{e_1,\ldots,e_{k-1}\}$ be the set of edges of $\Gamma$ which do
not lie in $\tilde{\Gamma}$. The upper order ideal $\{q\in
\ca{P}_\Gamma\mid q\geqslant p\}$ is isomorphic to the boolean
lattice of subsets of the set $\{e_1,\ldots,e_{k-1}\}$. The
condition on simplicial complex and the flag property hold for
similar reasons.
\end{proof}

\begin{ex}
Let $\Gamma=\St_n$ be a star graph with edges
$(0,1),(0,2),\ldots,(0,n)$. Its cluster-permutohedron has the
property that each of its lower order ideals is again a
cluster-permutohedron of a star graph. Indeed, each subgraph of
$\St_n$ is a disjoint union of a discrete set of vertices
$\{i_1,\ldots,i_s\}\subset\{1,\ldots,n\}$ and a star graph on the
remaining vertices.
\end{ex}

This example is particularly interesting in connection with the
results of Section \ref{secArrowMatricesOrbits}. The space
$M_{\St_n,\lambda}$ is a smooth manifold with half-dimensional
torus action. Therefore the orbit space $Q_n=M_{\St_n,\lambda}/T$
is a manifold with corners. The poset of faces of this manifold
with corners is isomorphic to the cluster-permutohedron
$\ca{P}_{\St_n}$ according to the arguments from the beginning of
this section.

We consider the case $n=3$, the star with $3$ rays, in detail.
First note that the star graph $\St_2$ is just a simple path on
$3$ vertices, hence its cluster-permutohedron $\ca{P}_\Gamma$ is
the poset of faces of a hexagon, according to Example
\ref{exTridiagonal}. All 2-dimensional faces of $Q_3$ are the
hexagons. Indeed, each of these faces is an orbit space of the
manifold of tridiagonal $3\times3$-matrices. This orbit space is
the permutohedron of dimension $2$, that is the hexagon.

As shown in Section \ref{secArrowMatricesBlock}, the space $Q_3$
is a solid torus, whose boundary is subdivided into hexagons in a
regular simple fashion: each vertex is contained in exactly $3$
hexagons. The combinatorics of this hexagonal subdivision can be
described in terms of the cluster-permutohedron, see.
Fig.\ref{pictHexTorusDetailed}.

We remark that the cell complex $\dd Q_3$ is a nanotube with
chiral vectors $(2,2)$ and $(4,-2)$, according to terminology
adopted in discrete geometry and chemistry \cite{BE}.

\begin{figure}
\begin{center}
\includegraphics[scale=0.4]{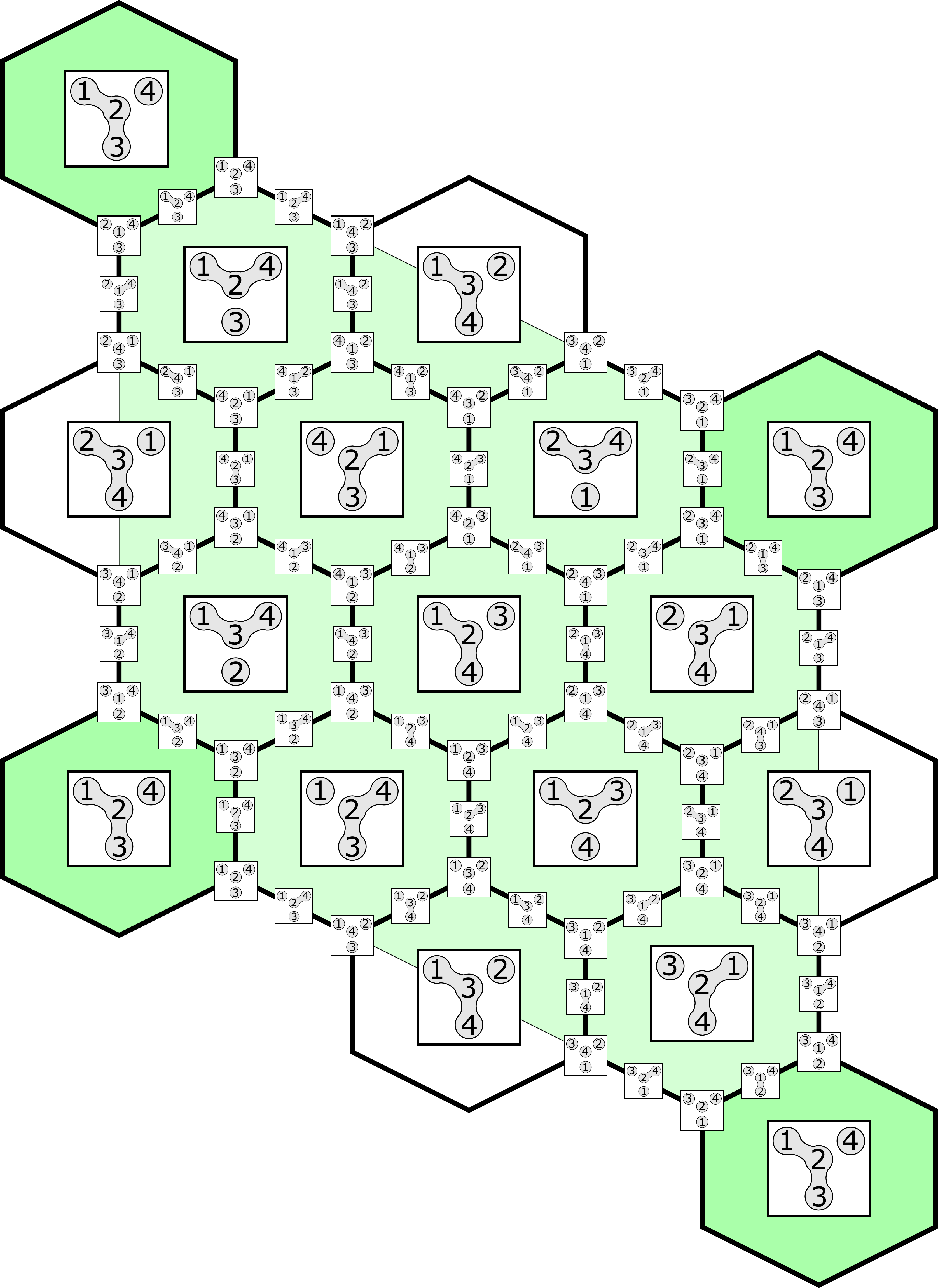}
\end{center}
\caption{The combinatorics of the cluster-permutohedron for the
star graph with 3 rays.}\label{pictHexTorusDetailed}
\end{figure}


\section{Arrow matrices $4\times
4$.}\label{secArrowMatricesTopology}

We apply the results of \cite{Ay1,Ay2,AMPZ} to describe the
cohomology and equivariant cohomology rings of the manifold
$M_{\St_3,\lambda}$.

Recall the necessary definitions. Let $Q$ be a manifold with
corners, $\dim Q=n$. Assume that every vertex of $Q$ lies in
exactly $n$ facets (such manifolds with corners were called
\emph{nice} in \cite{MasPan}, otherwise \emph{manifolds with
faces}). We call $Q$ an \emph{almost homological polytope} (or
simply an \emph{almost polytope}), if all its proper faces are
acyclic. If, moreover, $Q$ itself is acyclic, $Q$ is called a
\emph{homology polytope}.

Let $\ca{P}_Q$ denote a poset of faces of $Q$. The poset
$K_Q=\ca{P}_Q^*$ with reversed order is simplicial for a nice
manifold with corners. If $Q$ is an almost polytope, then the
simplicial poset $K_Q$ is a homology manifold (see the definition
below).

We recall the basic definitions needed for our considerations. The
definitions are given for simplicial complexes, since general
simplicial posets do not appear in our examples.

\begin{defin}
Let $K$ be a simplicial complex on a set $[m]$, and $I\in K$ be a
simplex. The \emph{link} of $I$ is a simplicial complex
$\link_KI=\{J\subseteq[m]\setminus I\mid I\cap J \in K\}$. In
particular, we have $\link_K\varnothing=K$. The complex $K$ is
called \emph{pure} if all its maximal-by-inclusion simplices have
equal dimensions. A pure simplicial complex $K$ of dimension $n-1$
is called a \emph{homology manifold} whenever, for any nonempty
simplex $I\in K$, $I\neq\varnothing$, the complex $\link_KI$ has
the same homology as a sphere of the corresponding dimension:
\begin{equation}\label{eqAcyclicityCond}
\Hr_j(\link_KI;\Zo)=\begin{cases} \Zo,\mbox{ if }j=n-1-|I|;\\
0,\mbox{ otherwise.}
\end{cases}
\end{equation}
The manifold $K$ is called \emph{orientable} if its geometric
realization possess an orienting cycle. The complex $K$ is called
a \emph{homology sphere} if the condition \eqref{eqAcyclicityCond}
holds for $I=\varnothing$, $\link_K\varnothing=K$ as well.
\end{defin}

\begin{rem}
In all the definitions, the coefficient ring should be specified.
If the coefficient ring is omitted, it is assumed that the
coefficients are in $\Zo$.
\end{rem}

\begin{defin}
Let $R$ be either a field or the ring $\Zo$, and
$R[m]=R[v_1,\ldots,v_m]$ be the polynomial ring on $m$ generators,
$\deg v_i=2$. The Stanley--Reisner ring of $K$ is the quotient
ring
\[
R[K]=R[m]/(v_{i_1}v_{i_2}\cdots v_{i_k}\mid
\{i_1,\ldots,i_k\}\notin K).
\]
The ring $R[K]$ has a natural structure of $R[m]$-module.
\end{defin}

\begin{defin}
Let $K$ be a pure simplicial complex of dimension $n-1$. A
function $\lambda\colon [m]\to R^n$ is called a
\emph{characteristic function} for $K$, if for each maximal
simplex $I=\{i_1,\ldots,i_n\}$, the collection
$\{\lambda(i_1),\ldots,\lambda(i_n)\}$ is a basis of a free module
$R^n$.
\end{defin}

Each function $\lambda\colon [m]\to R^n$ determines a sequence of
degree two elements in the ring $R[K]$ as described below (abusing
the terminology we call such elements linear, since all rings are
generated in degree two, and there are no odd components at all).
Let $\lambda(i)=(\lambda_{i,1},\ldots,\lambda_{i,n})\in R^n$,
$i\in[m]$. Consider
$\theta_j=\lambda_{1,j}v_1+\cdots+\lambda_{m,j}v_m\in R[K]$,
$j=1,\ldots,n$. Let $\Theta$ be the ideal in $R[K]$ generated by
$\theta_1,\ldots,\theta_n$.

\begin{lem}[see e.g. \cite{BPnew}]
Let $R$ be a field. Then the set of linear elements
$\theta_1,\ldots,\theta_n$ is a linear system of parameters in
$R[K]$ if and only if $\lambda\colon [m]\to R^n$ is a
characteristic function.
\end{lem}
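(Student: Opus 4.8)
The plan is to identify the asserted condition with the standard criterion for a homogeneous system of parameters in a Stanley--Reisner ring. First recall that $R[K]$ is a graded $R$-algebra of Krull dimension $n$. Indeed, the Stanley--Reisner ideal of $K$ decomposes as $\bigcap_I \mathfrak{p}_I$, where $I$ runs over the facets (maximal simplices) of $K$ and $\mathfrak{p}_I=(v_j\mid j\notin I)$; hence the minimal primes of $R[K]$ are exactly the $\mathfrak{p}_I$, they are pairwise incomparable, and $R[K]/\mathfrak{p}_I\cong R[v_i\mid i\in I]$ is a polynomial ring on $|I|=n$ variables, since $K$ is pure of dimension $n-1$. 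Because $\theta_1,\ldots,\theta_n$ are $n=\dim R[K]$ homogeneous elements of positive degree, they form a linear system of parameters precisely when $R[K]/(\theta_1,\ldots,\theta_n)$ is a finite-dimensional $R$-vector space, i.e. has Krull dimension $0$.

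Next I would compute this Krull dimension one facet at a time. Since $\Spec R[K]=\bigcup_I V(\mathfrak{p}_I)$, one has
\[
\dim\bigl(R[K]/(\theta_1,\ldots,\theta_n)\bigr)=\max_{I}\;\dim\bigl(R[v_i\mid i\in I]/(\bar\theta_1,\ldots,\bar\theta_n)\bigr),
\]
where $\bar\theta_j=\sum_{i\in I}\lambda_{i,j}v_i$ is the image of $\theta_j$ in $R[v_i\mid i\in I]$ (here we use that setting $v_j=0$ for $j\notin I$ annihilates the whole Stanley--Reisner ideal, because no minimal non-face is supported on the face $I$). Thus $\theta_1,\ldots,\theta_n$ is a linear system of parameters if and only if, for every facet $I$, the $n$ linear forms $\bar\theta_1,\ldots,\bar\theta_n$ in the $n$ variables $\{v_i\mid i\in I\}$ have the origin as their only common zero. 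Over a field this holds exactly when these linear forms are linearly independent, i.e. when the $n\times n$ matrix $\Lambda_I=(\lambda_{i,j})_{i\in I,\,1\leqslant j\leqslant n}$ is invertible --- which is precisely the condition that $\{\lambda(i)\mid i\in I\}$ be a basis of $R^n$. Quantifying over all facets $I$ yields the equivalence with $\lambda$ being a characteristic function.

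The only genuinely delicate point is the facetwise reduction of the Krull dimension, which rests on identifying the irreducible components of $\Spec R[K]$ with the subvarieties $V(\mathfrak{p}_I)$ and on the fact that the dimension of an affine scheme is the maximum of the dimensions of its components; the remaining ingredients --- the primary decomposition of the Stanley--Reisner ideal, the isomorphism $R[K]/\mathfrak{p}_I\cong R[v_i\mid i\in I]$, and the elementary linear algebra of common zero loci of linear forms --- are routine. All of this is classical and can be extracted from \cite{BPnew}; we include the argument for completeness.
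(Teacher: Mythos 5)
The paper does not prove this lemma itself; it defers to \cite{BPnew}. Your argument is correct and is essentially the standard proof one finds there: you use the primary decomposition $I_K=\bigcap_I\mathfrak{p}_I$ over facets to reduce the Krull dimension of $R[K]/\Theta$ to a facetwise computation, and then observe that $R[v_i\mid i\in I]/(\bar\theta_1,\ldots,\bar\theta_n)$ is zero-dimensional precisely when the $n\times n$ matrix $\Lambda_I=(\lambda_{i,j})_{i\in I}$ is invertible, which is the defining condition for a characteristic function. The one hypothesis you implicitly rely on --- that $K$ is pure of dimension $n-1$ --- is built into the paper's definition of a characteristic function (it only constrains $\lambda$ on maximal simplices of size $n$), and is satisfied in all the paper's applications since $K_Q$ is a homology sphere or homology manifold of dimension $n-1$. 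So the proof is complete and matches the cited source in method; no gaps.
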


In the following, $\lambda\colon[m]\to R^n$ always denotes a
characteristic function.

\begin{ex}
Let $c\colon [m]\to [n]$ be a proper coloring of the vertices of
$K$, that is a map which takes different values at the endpoints
of any edge of $K$. For such coloring there is an associated
characteristic function $\lambda_c\colon[m]\to R^n$,
$\lambda_c(i)=e_{c(i)}$, where $e_1,\ldots,e_n$ is the fixed basis
of $R^n$. For characteristic function $\lambda_c$, the elements of
the linear system of parameters have the form
\[
\theta_j=\sum_{i, c(i)=j}v_i\in R[K]_2.
\]
Characteristic functions of this kind will be called
\emph{chromatic}.
\end{ex}

\begin{prop}[Stanley--Reisner \cite{Reis,St}, Schenzel \cite{Sch}]
If $K$ is a homology sphere, then $R[K]$ is a Cohen--Macaulay
ring. In this case $\theta_1,\ldots,\theta_n$ is a regular
sequence in $R[K]$, which means that $R[K]$ is a free module over
the subring $R[\theta_1,\ldots,\theta_n]$. If $K$ is a homology
manifold, then $R[K]$ is a Buchsbaum ring (which means that
$\theta_1,\ldots,\theta_n$ is a weak regular sequence.
\end{prop}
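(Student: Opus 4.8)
The plan is to reduce the statement to two classical homological criteria for Stanley--Reisner rings and then invoke standard commutative algebra relating depth with systems of parameters. Throughout, $R[K]$ is a graded ring of Krull dimension $n$ and $\mathfrak{m}$ denotes its irrelevant maximal ideal. First I would recall \emph{Reisner's criterion}: over a field $R$ (and over $R=\Zo$ provided the links occurring below have torsion-free homology, which will hold here) the ring $R[K]$ is Cohen--Macaulay if and only if $\Hr_j(\link_K I;R)=0$ for every face $I\in K$, including $I=\varnothing$, and all $j<\dim\link_K I$. In parallel, \emph{Schenzel's criterion} asserts that $R[K]$ is Buchsbaum if and only if the same vanishing holds for every \emph{nonempty} face $I\neq\varnothing$, with no condition placed on $\link_K\varnothing=K$ itself. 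The common mechanism behind both criteria is \emph{Hochster's formula for local cohomology}, which expresses each graded component of the local cohomology module $H^i_{\mathfrak{m}}(R[K])$ as a direct sum, over faces $I\in K$, of reduced simplicial homology groups of the links $\link_K I$; in particular the coarse-degree-zero component of $H^i_{\mathfrak{m}}(R[K])$ is $\Hr_{i-1}(K;R)$, while the remaining components involve only links of nonempty faces.

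Second, I would feed the hypotheses into this formula. If $K$ is a homology sphere of dimension $n-1$, then every link $\link_K I$ is by definition a homology sphere of dimension $n-1-|I|$, so all the reduced homology groups appearing in Hochster's formula in cohomological degree $i<n$ vanish; hence $H^i_{\mathfrak{m}}(R[K])=0$ for $i<n$, which means $\operatorname{depth}R[K]=n=\dim R[K]$, i.e. $R[K]$ is Cohen--Macaulay. If instead $K$ is only a homology manifold, the same vanishing holds for links of all nonempty faces, but $\Hr_\ast(K)$ itself may be nonzero in intermediate degrees; Hochster's formula then shows that for $i<n$ the module $H^i_{\mathfrak{m}}(R[K])$ is concentrated in coarse degree $0$, so it is a finite-dimensional $R$-module annihilated by $\mathfrak{m}$ --- which is exactly the definition of a Buchsbaum ring.

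Third, I would translate these ring-theoretic properties into the assertions about $\theta_1,\ldots,\theta_n$. Because $\lambda$ is a characteristic function, the linear elements $\theta_1,\ldots,\theta_n$ form a linear system of parameters in the $n$-dimensional ring $R[K]$, as recorded in the preceding lemma. In a Cohen--Macaulay ring every system of parameters is a regular sequence, and a graded Cohen--Macaulay ring is a finitely generated free module over any polynomial subring generated by a homogeneous system of parameters; this gives the first two assertions. For a Buchsbaum ring, by definition every system of parameters is a weak regular sequence, i.e. $\mathfrak{m}\cdot((\theta_1,\ldots,\theta_{i-1}):\theta_i)\subseteq(\theta_1,\ldots,\theta_{i-1})$ for every $i$, which is the last assertion.

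The main obstacle, if one insists on a self-contained argument rather than citing the literature, is establishing Hochster's formula itself: this requires computing the cohomology of the \v{C}ech complex on the generators $v_1,\ldots,v_m$ with its fine $\Zo^m$-grading and recognizing each multidegree piece as the reduced homology of a link, or, equivalently, running the spectral sequence comparing local cohomology with simplicial cohomology of links --- the $\Zo^m$-graded bookkeeping is the delicate point. A secondary subtlety is the case $R=\Zo$, where one must check that all links in play have torsion-free homology so that the field criteria lift to the integers; this is automatic here, since homology spheres, and the links occurring in a homology manifold, have the integral homology of spheres.
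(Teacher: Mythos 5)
The paper does not prove this proposition; it cites it as a classical result of Reisner, Stanley, and Schenzel. Your sketch follows exactly the standard route those sources take (Reisner's criterion via Hochster's formula, Schenzel's criterion for the Buchsbaum case, then the definitions of Cohen--Macaulay and Buchsbaum to convert to statements about the system of parameters), so there is no methodological divergence from what the paper is implicitly invoking.

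One step is imprecise and, read literally, is a gap. After applying Hochster's formula you conclude that for $i<n$ the module $H^i_{\mathfrak{m}}(R[K])$ is concentrated in internal degree $0$ and hence annihilated by $\mathfrak{m}$, and you then assert this ``is exactly the definition of a Buchsbaum ring.'' It is not: that condition is the definition of a \emph{quasi-Buchsbaum} ring. Buchsbaum is the strictly stronger property that every system of parameters is a weak regular sequence; in general quasi-Buchsbaum does not imply Buchsbaum. The implication does hold for Stanley--Reisner rings, but that equivalence is precisely the substance of Schenzel's theorem and is not a formal consequence of Hochster's formula. So either cite Schenzel's criterion directly to get Buchsbaumness from the vanishing of $\Hr_j(\link_K I;R)$ for nonempty $I$ (as you do in your first paragraph, which is fine), or, if you want to route the argument through local cohomology, you must also invoke Schenzel's quasi-Buchsbaum $\Rightarrow$ Buchsbaum statement for face rings rather than treating the two notions as synonymous.

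A second, smaller point: your parenthetical caveat about $R=\Zo$ --- that the criteria ``lift to the integers'' because the relevant links have torsion-free homology --- is heuristically right but deserves one more sentence. The cleanest way to make it rigorous is to note that for a $\Zo$-homology sphere (resp.\ manifold) the links have the integral homology of spheres, hence $\Zo[K]$ has no $\Zo$-torsion and the Cohen--Macaulay (resp.\ Buchsbaum) property over $\Zo$ follows from the property over every field $\Zo/p$ together with freeness of the graded pieces; without something of this sort, Reisner's and Schenzel's criteria as you stated them are field-coefficient theorems.
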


Recall the basic combinatorial characteristics of a simplicial
complex. Let $f_j$ denote the number of $j$-dimensional simplices
of $K$ for $j=-1,0,\ldots,n-1$, in particular, $f_{-1}=1$ (the
empty simplex has formal dimension $-1$). $h$-numbers of $K$ are
defined from the relation:
\begin{equation}\label{eqHvecDefin}
\sum_{j=0}^nh_jt^{n-j}=\sum_{j=0}^nf_{j-1}(t-1)^{n-j},
\end{equation}
where $t$ is a formal variable. Let $\br_j(K)=\dim \Hr_j(K)$ be
the reduced Betti number of $K$. \emph{$h'$- and $h''$-numbers} of
$K$ are defined by the relations
\begin{equation}\label{eqDefHprime}
h_j'=h_j+{n\choose
j}\left(\sum_{s=1}^{j-1}(-1)^{j-s-1}\br_{s-1}(K)\right)\mbox{ for
} 0\leqslant j\leqslant n;
\end{equation}
\begin{equation}\label{eqDefHtwoprimes}
h_j'' = h_j'-{n\choose j}\br_{j-1}(K) = h_j+{n\choose
j}\left(\sum_{s=1}^{j}(-1)^{j-s-1}\br_{s-1}(K)\right)
\end{equation}
for $0\leqslant j\leqslant n-1$, and $h''_n=h'_n$. Summation over
an empty set is assumed zero.

\begin{prop}[Stanley--Reisner \cite{Reis,St}, Schenzel \cite{Sch}]
For each pure simplicial complex $K$ of dimension $n-1$ there
holds
\[
\Hilb(R[K];t)=\dfrac{h_0+h_1t^2+\cdots+h_nt^n}{(1-t^2)^n}.
\]
For homology sphere $K$ there holds
$\Hilb(R[K]/\Theta;t)=\sum_ih_it^{2i}$. For homology manifold $K$
there holds $\Hilb(R[K]/\Theta;t)=\sum_ih'_it^{2i}$.
\end{prop}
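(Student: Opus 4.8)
The plan is to prove the three assertions in turn, taking the preceding Proposition (Stanley--Reisner, Schenzel) as known input and, for simplicity, working over a field $R$, the integral case being entirely analogous. First I would compute $\Hilb(R[K];t)$ by a straightforward monomial count. An $R$-basis of $R[K]$ consists of the monomials $v_1^{a_1}\cdots v_m^{a_m}$ whose support $\{i\mid a_i>0\}$ is a face of $K$ (the constant $1$ is included, since $\varnothing\in K$). Grouping these basis monomials by support and recalling $\deg v_i=2$, a face $\sigma\in K$ with $\dim\sigma=j$ contributes $\bigl(t^2/(1-t^2)\bigr)^{j+1}$ to the Hilbert series, because in each of the $j+1$ variables indexed by $\sigma$ the exponent runs over $\{1,2,\dots\}$. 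Summing over faces gives $\Hilb(R[K];t)=\sum_{j=-1}^{n-1}f_j\bigl(t^2/(1-t^2)\bigr)^{j+1}$, and clearing the common denominator $(1-t^2)^n$ turns the numerator into $\sum_j f_{j-1}t^{2j}(1-t^2)^{n-j}$, which by the defining identity \eqref{eqHvecDefin} of the $h$-numbers (with the formal variable specialised to $t^2$) equals $h_0+h_1t^2+\cdots+h_nt^{2n}$. This is the first formula.

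For a homology sphere $K$, the preceding Proposition says that $\theta_1,\dots,\theta_n$ is a regular sequence in $R[K]$; since it is also a homogeneous system of parameters (by the Lemma on characteristic functions), the $\theta_j$ are algebraically independent and $R[K]$ is a free module over the polynomial subring $R[\theta_1,\dots,\theta_n]$. Multiplicativity of Hilbert series under this free extension gives $\Hilb(R[K];t)=\Hilb(R[K]/\Theta;t)\cdot(1-t^2)^{-n}$, and comparing with the first formula yields $\Hilb(R[K]/\Theta;t)=\sum_j h_jt^{2j}$.

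For a homology manifold $K$ the Proposition only provides that $\theta_1,\dots,\theta_n$ is a \emph{weak} regular sequence, i.e.\ that $R[K]$ is Buchsbaum. Here I would argue through the filtration $M_0=R[K]$, $M_i=M_{i-1}/\theta_iM_{i-1}$ (so $M_n=R[K]/\Theta$), with $C_i:=(0:_{M_{i-1}}\theta_i)$; weak regularity means each $C_i$ has finite length and is annihilated by the irrelevant ideal. From the four-term exact sequence $0\to C_i\to M_{i-1}\xrightarrow{\cdot\theta_i}M_{i-1}\to M_i\to 0$, read with the degree shift $\deg\theta_i=2$, one gets $\Hilb(M_i;t)=(1-t^2)\Hilb(M_{i-1};t)+t^2\Hilb(C_i;t)$, and iterating together with the first formula,
\[
\Hilb(R[K]/\Theta;t)=\sum_j h_jt^{2j}+\sum_{i=1}^{n}(1-t^2)^{n-i}\,t^2\,\Hilb(C_i;t).
\]
It then remains to evaluate the correction. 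By Schenzel's analysis of Buchsbaum modules the numbers $\dim_R(C_i)_d$ are determined by the local cohomology modules $H^p_{\mathfrak m}(R[K])$, $0\le p\le n-1$; and by Reisner's (Hochster's) formula these are, for a homology manifold, concentrated in internal degree $0$ with $\dim_R H^p_{\mathfrak m}(R[K])=\br_{p-1}(K)$. Substituting this and performing the Koszul-type bookkeeping (which is what produces the binomial coefficients $\binom{n}{j}$) converts the correction into $\sum_j\binom{n}{j}\bigl(\sum_{s=1}^{j-1}(-1)^{j-s-1}\br_{s-1}(K)\bigr)t^{2j}$, which is precisely $\sum_j(h'_j-h_j)t^{2j}$ by \eqref{eqDefHprime}. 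Hence $\Hilb(R[K]/\Theta;t)=\sum_j h'_jt^{2j}$.

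The only genuinely non-formal ingredient is the last one: the identification of the correction terms rests on Schenzel's formula for the Hilbert function of a Buchsbaum ring modulo a weak regular sequence, combined with the local-cohomology computation of $R[K]$ for a homology manifold. Everything else is elementary bookkeeping. Since these facts are classical, an acceptable alternative is simply to invoke \cite{Sch} (cf.\ \cite{Reis,St} and \cite{BPnew}).
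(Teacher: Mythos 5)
The paper attributes this Proposition to Reisner, Stanley and Schenzel and gives no internal proof, so there is nothing of the paper's own to compare against; your proposal correctly reconstructs the classical argument found in the cited sources. Two small bookkeeping points. First, the paper's displayed numerator contains a typo ($t^{n}$ should be $t^{2n}$), which your computation silently corrects. Second, the change of variable you need in \eqref{eqHvecDefin} is $t\mapsto 1/t^{2}$ followed by multiplication through by $t^{2n}$ — substituting $t\mapsto t^{2}$ directly would introduce spurious signs from $(t^{2}-1)^{n-j}$ versus $(1-t^{2})^{n-j}$; the conclusion $\sum_{j}f_{j-1}t^{2j}(1-t^{2})^{n-j}=\sum_{j}h_{j}t^{2j}$ is nevertheless correct. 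The homology-sphere case is exactly as you say: freeness of $R[K]$ over $R[\theta_1,\ldots,\theta_n]$, guaranteed by the Cohen--Macaulay statement in the preceding Proposition, makes the Hilbert series multiplicative. The Buchsbaum case is where the real content lies, and your outline is the right one: the four-term exact sequences attached to multiplication by $\theta_i$, the finiteness and $\mathfrak m$-annihilation of the torsion modules $C_i$, and the identification of their lengths via Hochster's formula for $H^{p}_{\mathfrak m}(R[K])$ — the point being that for a homology \emph{manifold} the links of nonempty faces are homology spheres, so $H^{p}_{\mathfrak m}(R[K])$ for $p<n$ is concentrated in internal degree $0$ with dimension $\br_{p-1}(K)$, which is exactly what produces the binomials $\binom{n}{j}$ and hence $h'_j-h_j$ as in \eqref{eqDefHprime}. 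Since all of this is Schenzel's theorem, your final remark that one may simply invoke \cite{Sch} is accurate, and is in effect what the authors do.
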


\begin{prop}[Novik--Swartz \cite{NS,NSgor}]
Let $K$ be a connected orientable homology manifold of dimension
$n-1$. The $2j$-th graded component of the module $R[K]/\Theta$
contains a vector subspace $(I_{NS})_{2j}\cong{n\choose
j}\Hr^{j-1}(K;R)$, which is a trivial $R[m]$-submodule (i.e.
$R[m]_+(I_{NS})_{2j}=0$). Let
$I_{NS}=\bigoplus_{j=0}^{n-1}(I_{NS})_{2j}$ be the sum of all
submodules except for the top one. Then the quotient module
$R[K]/\Theta/I_{NS}$ is a Poincare duality algebra, and there
holds $\Hilb(R[K]/\Theta/I_{NS};t)=\sum_ih''_it^{2i}$.
\end{prop}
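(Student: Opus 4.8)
The plan is to deduce the statement from the homological algebra of Buchsbaum rings together with the algebraic form of Poincar\'e duality enjoyed by the face ring of an orientable homology manifold. Since $K$ is a homology manifold of dimension $n-1$, the ring $R[K]$ is Buchsbaum of Krull dimension $n$, so (as recalled above) $\theta_1,\dots,\theta_n$ is a weak regular sequence and $A:=R[K]/\Theta$ is a finite-dimensional graded $R$-algebra with $\dim_R A_{2j}=h'_j$ by Schenzel's formula. The first thing I would record is the structure of the local cohomology: for $i<n$ the module $H^i_{\mathfrak m}(R[K])$ is a finite-dimensional $R$-vector space concentrated in internal degree $0$, and Hochster's formula identifies $H^i_{\mathfrak m}(R[K])_0\cong\Hr^{i-1}(K;R)$ --- here one uses precisely that the link of every nonempty face of $K$ is Cohen--Macaulay, which is the homology-manifold condition away from $\varnothing$.

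Next I would construct $I_{NS}$. The Buchsbaum property says that the positive Koszul homology $H_{>0}(\theta_1,\dots,\theta_n;R[K])$ is annihilated by $R[m]_+$ and is governed by the low local cohomology $H^{<n}_{\mathfrak m}(R[K])$; the socle bookkeeping of Novik--Swartz \cite{NS} then produces, for each $j<n$, a natural injection $\binom{n}{j}H^j_{\mathfrak m}(R[K])_0\hookrightarrow A_{2j}$ whose image is a trivial $R[m]$-submodule. Declaring $(I_{NS})_{2j}$ to be this image and $I_{NS}=\bigoplus_{j=0}^{n-1}(I_{NS})_{2j}$, the identification of $H^j_{\mathfrak m}(R[K])_0$ above gives $(I_{NS})_{2j}\cong\binom{n}{j}\Hr^{j-1}(K;R)$, and the definition of the $h''$-numbers yields immediately $\dim_R(A/I_{NS})_{2j}=h'_j-\binom{n}{j}\br_{j-1}(K)=h''_j$, which is the asserted Hilbert function (note that the top piece, $j=n$, is deliberately kept: $\binom{n}{n}\Hr^{n-1}(K;R)$ is the fundamental class and survives in the quotient).

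The final and most substantial step is to show that $B:=A/I_{NS}$ is a Poincar\'e duality algebra. By the socle analysis of \cite{NS}, $B$ already behaves like the Artinian reduction of a Cohen--Macaulay ring; to promote this to Gorenstein I would bring in the hypotheses that $K$ is connected and orientable. Connectedness and orientability give $\Hr^{n-1}(K;R)\cong R$, hence $\dim_R B_{2n}=h'_n=h''_n=1$, and Gr\"abe's graded self-duality $H^i_{\mathfrak m}(R[K])\cong\Hom_R(H^{n-i}_{\mathfrak m}(R[K]),R)$ (with the shift by $-n$) --- the algebraic incarnation of Poincar\'e duality for $K$ and its links --- forces the palindromy $h''_j=h''_{n-j}$ and, more importantly, shows that after dividing out $I_{NS}$ the socle of $B$ is concentrated in the top degree $2n$. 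A finite-dimensional graded $R$-algebra generated in degree $2$ whose socle is one-dimensional and sits in the top degree is a Poincar\'e duality algebra, and its Hilbert series is then automatically $\sum_i h''_i t^{2i}$, as required.

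The hard part will be exactly this last collapse of the socle: for a general Buchsbaum complex $A/I_{NS}$ is only Cohen--Macaulay and its socle may be spread over several degrees. It is the orientable-homology-manifold hypothesis, through Gr\"abe's self-duality of the local cohomology modules, that guarantees every socle element of $A$ in degree below $2n$ already lies in the image of $H^{<n}_{\mathfrak m}(R[K])$ and is therefore killed in $B$. Making the comparison between the Koszul-homology description of the low-degree socle of $A$ and $H^{<n}_{\mathfrak m}(R[K])$ fully precise, and transporting Gr\"abe duality through the quotient by $I_{NS}$, is the technical core; here I would follow the arguments of \cite{NS,NSgor} in detail.
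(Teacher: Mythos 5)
The paper does not prove this statement: it is quoted verbatim as a theorem of Novik and Swartz, with the bracketed citation \cite{NS,NSgor} serving as the entire justification. So there is no internal proof to compare your write-up against.

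As a reconstruction of the Novik--Swartz argument your outline is accurate in its broad strokes. You correctly isolate the three ingredients: (i) the Buchsbaum property forcing $H^i_{\mathfrak m}(R[K])$ for $i<n$ to sit in internal degree $0$, where Hochster's formula reads it off as $\Hr^{i-1}(K;R)$; (ii) the Novik--Swartz socle construction that plants $\binom{n}{j}\Hr^{j-1}(K;R)$ as a trivial $R[m]$-submodule of $(R[K]/\Theta)_{2j}$, giving the Hilbert-function drop $h'_j\mapsto h''_j$; and (iii) Gr\"abe-type self-duality of the local cohomology for orientable homology manifolds as the mechanism that collapses the socle of $A/I_{NS}$ into the top degree. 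You also correctly observe that the top summand $j=n$ is \emph{not} quotiented out, which is essential for $h''_n=h'_n=1$ to survive. Your own final paragraph honestly concedes that the argument is a roadmap deferring the Koszul-homology bookkeeping and the transport of Gr\"abe duality through the quotient to \cite{NS,NSgor}; that is the right self-assessment, and it is exactly the level of detail the paper itself chooses not to reproduce. The one place where I would tighten the wording is your citation of Hochster's formula: the isomorphism $H^i_{\mathfrak m}(R[K])_0\cong\Hr^{i-1}(K;R)$ holds for any $K$; the Cohen--Macaulayness of links is what you need to conclude that the degree-$0$ piece is the \emph{whole} module $H^i_{\mathfrak m}(R[K])$ for $i<n$, and you have conflated these two facts into one sentence.
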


This theorem in particular implies the generalized
Dehn--Sommerville relations for manifolds: $h''_j=h''_{n-j}$
(these relations, however, can be proved without using hard
algebraic machinery).

Let $\Lambda^*R^n$ denote the exterior algebra over $R^n$. For a
characteristic function $\lambda\colon [m]\to R^n$ and an oriented
simplex $I=\{i_1,\ldots,i_s\}\in K$ consider the nonzero skew-form
\[
\lambda_I=\lambda(i_1)\wedge\cdots\wedge\lambda(i_s)\in
\Lambda^sR^n.
\]

\begin{prop}[Ayzenberg, \cite{Ay1,Ay3}]\label{propMyRelations}
Let $K$ be a connected orientable homology manifold, $\dim K=n-1$,
and $\theta_1,\ldots,\theta_n$ be a linear system of parameters
corresponding to a characteristic function $\lambda$. Let $R$ be
either $\Qo$ or $\Zo$. The $2j$-th graded component of the algebra
$R[K]/\Theta$ is additively generated by the elements
\[
v_I=v_{i_1}\cdots v_{i_j},\quad I=\{i_1,\ldots,i_j\}\in K,
\]
and we have the following.

(1) All additive relations on the elements $v_I$ in the module
$R[K]/\Theta$ have the form
\begin{equation}\label{eqRelsSR}
\sum_{I\in K,|I|=j}\langle\omega,\lambda_I\rangle \sigma(I) v_I,
\end{equation}
where $\mu$ runs over $(\Lambda^kR^n)^*$, and $\sigma$ runs over
the vector space of simplicial $(j-1)$-coboundaries of the complex
$K$: $\sigma\in \Ca^{j-1}(K;R)$, $\sigma=d\tau$.

(2) All additive relations on the elements $v_I$ in the module
$R[K]/\Theta/I_{NS}$ for $j<n$ have the form \eqref{eqRelsSR},
where $\sigma$ runs over the space of simplicial cocycles:
$\sigma\in \Ca^{j-1}(K;R)$, $d\sigma=0$.
\end{prop}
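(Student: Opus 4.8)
The plan is to follow the strategy of \cite{Ay1,Ay3}, splitting the argument into three parts: squarefree generation, verification that the displayed expressions are genuine relations, and completeness of the list. First I would record the formal ingredients. Since $K$ is a homology manifold, $R[K]$ is a Buchsbaum ring and $\theta_1,\dots,\theta_n$ is a weak regular sequence; reducing an arbitrary monomial modulo $\Theta$ (exactly as in the argument behind Schenzel's formula $\Hilb(R[K]/\Theta;t)=\sum_i h'_it^{2i}$) shows that the squarefree monomials $v_I$, $I\in K$, $|I|=j$, additively generate $(R[K]/\Theta)_{2j}$. Identifying the free module $\bigoplus_{I\in K,|I|=j}R\,e_I$ with the simplicial cochains $C^{j-1}(K;R)$, write $\pi\colon C^{j-1}(K;R)\to(R[K]/\Theta)_{2j}$, $e_I\mapsto v_I$, and set $\mathcal L^j=\ker\pi$, the module of all additive relations. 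Schenzel's Hilbert-series formula for homology manifolds then gives the numerical input $\dim_R\mathcal L^j=f_{j-1}-h'_j$.

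Next I would check that the displayed expressions are relations. Fix $\omega\in(\Lambda^jR^n)^*$ and a simplex $J\in K$ with $|J|=j-1$. Contracting, $\langle\omega,\lambda(i)\wedge\lambda_J\rangle=\langle\iota_{\lambda_J}\omega,\lambda(i)\rangle$, and $\iota_{\lambda_J}\omega=\sum_k c_k e_k^*\in(R^n)^*$ for suitable $c_k$, so $\sum_i\langle\omega,\lambda(i)\wedge\lambda_J\rangle v_i=\sum_k c_k\theta_k\in\Theta$. Multiplying by $v_J$ and using that $v_iv_J=0$ whenever $J\cup\{i\}\notin K$, while $\lambda(i)\wedge\lambda_J=0$ whenever $i\in J$, one gets $\sum_{I\supseteq J,\,|I|=j}[J:I]\,\langle\omega,\lambda_I\rangle\,v_I\in\Theta$, where $[J:I]$ are the usual incidence signs. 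Summing these identities over all $(j-2)$-simplices $J$ with weights $\tau(J)$, $\tau\in C^{j-2}(K;R)$, and collecting by $I$, the coefficient of $v_I$ becomes $\langle\omega,\lambda_I\rangle\sum_{J\subset I}[J:I]\tau(J)=\langle\omega,\lambda_I\rangle(d\tau)(I)$, which is \eqref{eqRelsSR} with $\sigma=d\tau$ a $(j-1)$-coboundary. Thus every such element lies in $\mathcal L^j$; let $\mathcal R^j\subseteq\mathcal L^j$ denote the $R$-span of all of them.

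For completeness it then suffices, over $\Qo$, to prove $\dim\mathcal R^j\ge f_{j-1}-h'_j$, since $\mathcal R^j\subseteq\mathcal L^j$ and $\dim\mathcal L^j=f_{j-1}-h'_j$ would then force $\mathcal R^j=\mathcal L^j$; the integral statement is handled by the separate torsion-free argument of \cite{Ay1,Ay3}, where connectedness and orientability of $K$ enter. Now $\mathcal R^j$ is the image of $\Psi_j\colon(\Lambda^jR^n)^*\otimes C^{j-2}(K;R)\to C^{j-1}(K;R)$, $\omega\otimes\tau\mapsto\bigl(I\mapsto\langle\omega,\lambda_I\rangle(d\tau)(I)\bigr)$; rewriting the entries via the contraction identity exhibits $\Psi_j$ as the simplicial coboundary twisted by contraction with the characteristic vectors $\lambda(i)$. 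Assembling the $\Psi_j$ over all $j$ yields a complex whose cohomology is expressible through $\Hr^*(K;R)$, and an Euler-characteristic count in this complex—using the Dehn--Sommerville-type relations and the definition \eqref{eqDefHprime} of $h'_j$—gives $\dim\mathcal R^j=f_{j-1}-h'_j$, proving (1). For (2), passing to $R[K]/\Theta/I_{NS}$ enlarges the degree-$2j$ relations ($j<n$) precisely by the trivial submodule $(I_{NS})_{2j}\cong\binom nj\Hr^{j-1}(K;R)$; on the level of \eqref{eqRelsSR} this corresponds to upgrading $\sigma$ from a coboundary to a cocycle, and one checks—using that $\lambda$ is a characteristic function and that $K$ is orientable, cf.\ the Novik--Swartz proposition above—that the induced map $(\Lambda^jR^n)^*\otimes\Hr^{j-1}(K;R)\to(R[K]/\Theta)_{2j}$ is injective with image $(I_{NS})_{2j}$. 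Hence the additive relations in $R[K]/\Theta/I_{NS}$ are exactly those of the form \eqref{eqRelsSR} with $\sigma$ a cocycle.

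The hard part will be the completeness step: pinning down the exact rank of the twisted-coboundary map $\Psi_j$, equivalently controlling the first syzygies of the generating set $\{v_I\}$. This is where the Buchsbaum structure of $R[K]$ and Schenzel's Hilbert-series formula do the real work, and where the hypotheses $R\in\{\Qo,\Zo\}$ and ``$K$ a connected orientable homology manifold'' are genuinely used; I expect to import this commutative-algebra input essentially verbatim from \cite{Ay1,Ay3}.
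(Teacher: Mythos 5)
The paper does not prove this proposition; it is stated as imported material from \cite{Ay1,Ay3} and used as a black box, so there is no in-paper argument to compare against. Evaluated as a reconstruction of the cited proof, your outline is structurally sound and the verification step is fully correct: identifying $(R[K]/\Theta)_{2j}$ with a quotient of $C^{j-1}(K;R)$, the contraction identity $\langle\omega,\lambda(i)\wedge\lambda_J\rangle=\langle\iota_{\lambda_J}\omega,\lambda(i)\rangle$ does show that $\sum_i\langle\iota_{\lambda_J}\omega,\lambda(i)\rangle v_i$ is an $R$-combination of the $\theta_k$, and multiplying by $v_J$ and summing against $\tau\in C^{j-2}(K;R)$ yields exactly \eqref{eqRelsSR} with $\sigma=d\tau$. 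Likewise, reading off $\dim_R\mathcal L^j=f_{j-1}-h'_j$ from Schenzel's Hilbert series is the right numerical anchor over $\Qo$, and your treatment of part~(2) via the Novik--Swartz trivial submodule $(I_{NS})_{2j}\cong\binom nj\Hr^{j-1}(K;R)$ is in line with how the upgrade from coboundaries to cocycles is expected to work.

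The genuine gap is exactly where you say it is, and it is not a routine computation. The map $\Psi_j\colon(\Lambda^jR^n)^*\otimes C^{j-2}(K;R)\to C^{j-1}(K;R)$ has a substantial and nonobvious kernel (all $\omega\otimes\tau$ with $d\tau=0$, plus cross-term cancellations between different $\omega$ and $\tau$), so ``an Euler-characteristic count in the assembled complex'' requires precisely the cohomological identification of that twisted-coboundary complex with $\Hr^*(K;R)$ which \cite{Ay1,Ay3} establish; without it, you have only $\mathcal R^j\subseteq\mathcal L^j$ and no lower bound on $\dim\mathcal R^j$. The integral case is a second nontrivial input: over $\Zo$ the Hilbert-series count does not by itself pin down the rank of the relation module, and the torsion-freeness statement you defer to the references uses orientability and connectedness of $K$ in an essential way. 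As an outline your proposal is faithful to the expected strategy, but these two steps are imported rather than proved, and the first of them is the heart of the theorem.
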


In particular, this statement gives an explicit formula for the
generators of Novik--Swartz ideal $I_{NS}\subset R[K]/\Theta$. In
the work \cite{AM} we called the relations \eqref{eqRelsSR} the
Minkowski type relations by analogy with the terminology adopted
in toric geometry. It was shown that these relations have simple
geometrical explanation, coming from the theory of
multi-polytopes.

The theory sketched above can be used to describe the
cohomological structure of manifolds with half-dimensional torus
action. Let $X$ be a $2n$-manifold, and let the compact torus
$T^n$ act on $X$ in a locally standard way. In this case the orbit
space $Q=X/T$ is a nice manifold with corners. Let $K_Q$ be the
simplicial poset dual to simple poset of faces of $Q$. Let $[m]$
be the vertex set of $K_Q$, and therefore, $\F_1,\ldots,\F_m$ be
the set of facets of $Q$.

The preimage of $\F_i\subset Q$ under the map $p\colon X\to Q$ is
a submanifold $X_i\subset X$ of codimension $2$, which is called a
\emph{characteristic submanifold}. The cohomology class dual to
$[X_i]\in H_{2n-2}(X)$ is denoted by $v_i\in H^2(X;\Zo)$ (one
should orient $X_i$ somehow to make things well-defined). For a
point $x$ lying in the interior of $\F_i$, the stabilizer of the
action is a one-dimensional subgroup. It has the form
$\lambda(i)(S^1)$, where $\lambda(i)\in \Hom(S^1,T^n)\cong\Zo^n$.
Since the action is locally standard, the map $\lambda$ is a
characteristic function on $K_Q$.

If $Q$ is a homology polytope, then $K_Q$ is a homology sphere. If
$Q$ is an almost polytope, then $K_Q$ is a homology manifold. The
proper faces of an almost polytope $Q$ determine the homological
cell subdivision of the boundary $\dd Q$, which is dual to $K_Q$,
see details in \cite{Ay0}.

\begin{prop}[Masuda--Panov \cite{MasPan}]
If $Q$ is a homology polytope, then
\[
H^*_T(X;\Zo)\cong \Zo[K_Q];\qquad H^*(X;\Zo)\cong \Zo[K_Q]/\Theta;
\]
\[
H^{2i+1}(X;\Zo)=0;\qquad H^{2i}(X;\Zo)\cong\Zo^{h_i(K_Q)}.
\]
\end{prop}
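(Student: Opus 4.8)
The plan is to reduce everything to the general structure theory of torus actions whose orbit space is a homology polytope, for which the main engine is the Leray–Serre/Borel equivariant cohomology machinery together with the collapsing of the orbit-space filtration. First I would recall the equivariant case. Since $Q$ is a homology polytope, each proper face of $Q$ is acyclic and $Q$ itself is acyclic; dually, $K_Q$ is a homology sphere of dimension $n-1$. The key point is that the Borel construction $ET\times_T X$ fibers over $BT$ with fiber $X$, and the filtration of $X$ by preimages of the face skeleta of $Q$ induces a filtration of $ET\times_T X$ whose associated spectral sequence has $E_1$-term built from the equivariant cohomology of the ``face strata'' — each of which is, up to homotopy, a product of a contractible face with a coordinate subtorus classifying space. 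Because all proper faces are acyclic, the $E_1$-page is precisely the cochain complex computing the Stanley–Reisner ring $\Zo[K_Q]$ as a module, and the acyclicity of $K_Q$'s links forces the spectral sequence to degenerate, giving $H^*_T(X;\Zo)\cong \Zo[K_Q]$. This is exactly the argument of Masuda–Panov \cite{MasPan}; I would cite it rather than reprove it, but I would at least indicate why the face-acyclicity hypothesis is what makes the strata cohomologically trivial and the spectral sequence collapse.

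Next I would pass from equivariant to ordinary cohomology. The fibration $X\hookrightarrow ET\times_T X\to BT$ gives a spectral sequence with $E_2 = H^*(BT;\Zo)\otimes H^*(X;\Zo)$ converging to $H^*_T(X;\Zo)$. I want to show this collapses as well, so that $H^*_T(X;\Zo)$ is a free $H^*(BT;\Zo)=\Zo[\theta_1,\ldots,\theta_n]$-module and $H^*(X;\Zo)=H^*_T(X;\Zo)\otimes_{\Zo[\theta_1,\ldots,\theta_n]}\Zo = \Zo[K_Q]/\Theta$. The freeness is where the homology-sphere hypothesis re-enters: by the Stanley–Reisner/Schenzel proposition quoted in the excerpt, $\Zo[K_Q]$ is Cohen–Macaulay when $K_Q$ is a homology sphere, hence $\theta_1,\ldots,\theta_n$ (which come from the characteristic function $\lambda$, a bona fide characteristic function because the action is locally standard) form a regular sequence, so $\Zo[K_Q]$ is free over $\Zo[\theta_1,\ldots,\theta_n]$. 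Freeness over the base forces the Serre spectral sequence to degenerate, giving the identification $H^*(X;\Zo)\cong \Zo[K_Q]/\Theta$.

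Finally, the vanishing of odd cohomology and the rank computation are immediate bookkeeping. The Hilbert series proposition quoted above gives $\Hilb(\Zo[K_Q]/\Theta;t)=\sum_i h_i(K_Q)t^{2i}$ for a homology sphere, so $\Zo[K_Q]/\Theta$ is concentrated in even degrees with $2i$-th graded piece free of rank $h_i(K_Q)$; transporting this across the isomorphism yields $H^{2i+1}(X;\Zo)=0$ and $H^{2i}(X;\Zo)\cong\Zo^{h_i(K_Q)}$. One should be a little careful that the isomorphisms are as rings, not just modules: the multiplicative structure on $H^*_T(X)$ matches that of $\Zo[K_Q]$ because the classes $v_i$ are Poincaré-dual to the characteristic submanifolds $X_i$ and the Stanley–Reisner relations $v_{i_1}\cdots v_{i_k}=0$ for non-faces reflect the geometric fact that the corresponding characteristic submanifolds have empty intersection; this too is part of \cite{MasPan}. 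I expect the main obstacle — or rather the only genuinely non-formal input — to be the degeneration of the orbit-space filtration spectral sequence that gives $H^*_T(X;\Zo)\cong \Zo[K_Q]$; everything after that is a consequence of Cohen–Macaulayness of $\Zo[K_Q]$ plus standard spectral-sequence formalism. Since the statement is attributed to Masuda–Panov and used here only as a black box for the $n=3$ application, I would keep the proof to a sketch citing \cite{MasPan}, making explicit only the two places where the hypothesis ``$Q$ is a homology polytope'' is used: acyclicity of proper faces $\Rightarrow$ collapse of the filtration SS, and acyclicity of $Q$ (equivalently $K_Q$ a homology sphere) $\Rightarrow$ Cohen–Macaulayness $\Rightarrow$ freeness over $\Zo[\theta_1,\ldots,\theta_n]$.
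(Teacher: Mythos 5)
The paper offers no proof of this proposition; it is quoted verbatim as a black box from Masuda--Panov \cite{MasPan}, and that is exactly what you propose to do, so in that sense the ``approaches'' agree. Nevertheless, let me comment on the sketch you give of the Masuda--Panov argument, since it does not quite follow their actual route and one step is underjustified. In \cite{MasPan} the logical order is the reverse of yours: they first prove $H^{\mathrm{odd}}(X;\Zo)=0$ directly, by an inductive Mayer--Vietoris argument over the face poset of $Q$ that uses precisely the acyclicity of proper faces and of $Q$; this gives equivariant formality (hence degeneration of the Serre spectral sequence and freeness of $H^*_T(X)$ over $H^*(BT)$); only then do they identify $H^*_T(X)$ with the face ring, using injectivity of the restriction to the fixed-point set. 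You instead begin with a face-skeleton filtration of the Borel construction and assert that the resulting spectral sequence collapses to $\Zo[K_Q]$ at $E_1$. That is the Davis--Januszkiewicz template, and it does work cleanly when $Q$ is an honest polytope (or, more generally, when the orbit map admits a section, so $ET\times_T X$ has the homotopy type of the polyhedral product $\mathrm{hocolim}_{\sigma\in K_Q} BT_\sigma$); but as stated it is a gap: the description of the $E_1$-page as ``the cochain complex computing the Stanley--Reisner ring'' and the claim that face-acyclicity alone forces collapse are not justified, and for a general homology polytope one does not automatically have the section that makes the hocolim model available. Your steps 2--4 (Cohen--Macaulayness of $\Zo[K_Q]$ via the Reisner/Schenzel result, degeneration of the Serre SS from freeness, and the Hilbert-series bookkeeping giving $H^{2i+1}=0$ and $H^{2i}\cong\Zo^{h_i}$) are all fine and standard, and you correctly isolate where the hypothesis ``$Q$ is a homology polytope'' enters. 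The only substantive issue is that step 1 needs either the section hypothesis (so the DJ hocolim argument applies) or Masuda--Panov's $H^{\mathrm{odd}}=0$-first strategy; you should not present the filtration-collapse argument as if it were automatic from face-acyclicity.
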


\begin{prop}[Ayzenberg--Masuda--Park--Zeng \cite{AMPZ}]\label{propEquivCohomAlmPoly}
If $Q$ is an orientable connected almost polytope and the
projection map $p\colon X\to Q$ admits a section, then
\[
H^*_T(X;R)\cong R[K_Q]\oplus H^*(Q;R),
\]
(the units are identified in the direct sum of the rings).
\end{prop}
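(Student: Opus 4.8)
The plan is to reduce $X$ to its canonical model over $Q$, to analyze the Borel construction $X_T=ET\times_T X$ as a space lying over $Q$, and then to use the section of $p$ to split off the summand $H^*(Q;R)$.

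\emph{Step 1 (reduction to the canonical model).} Since the action is locally standard and $p$ admits a section $s$, the restriction of $p$ over the interior of $Q$ is a principal $T$-bundle which $s$ trivializes; as the inclusion of the interior into $Q$ is a homotopy equivalence, the entire ``free part'' is trivial, so by the structure theory of locally standard torus manifolds $X$ is equivariantly homeomorphic to the canonical model $X(Q,\lambda)=(Q\times T)/{\sim}$, where $(x,t)\sim(x,t')$ precisely when $t^{-1}t'$ lies in the coordinate subtorus $T_F\subseteq T$ prescribed by $\lambda$ on the face $F$ containing $x$ in its relative interior. From now on I take $X=X(Q,\lambda)$, with its tautological section $x\mapsto[x,1]$.

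\emph{Step 2 (the Borel construction over $Q$, and the splitting).} Applying $ET\times_T(-)$ to the $T$-invariant map $p$ gives $q\colon X_T\to Q$; its fibre over a point in the relative interior of a face $F$ is $ET\times_T(T/T_F)=ET/T_F\simeq BT_F$, so the generic fibre (over the interior of $Q$) is contractible and the fibre over a codimension-$c$ face is $(\CP^\infty)^c$. Hence $Rq_*\underline R$ is constructible along the face stratification and $H^*_T(X;R)=H^*(Q;Rq_*\underline R)$. The contractible generic fibre produces an inclusion of the constant subcomplex $\underline R\hookrightarrow Rq_*\underline R$ whose cokernel $\ca C$ is supported on $\partial Q$, giving a long exact sequence
\[
\cdots\to H^j(Q;R)\xrightarrow{q^*}H^j_T(X;R)\to H^j(Q;\ca C)\to H^{j+1}(Q;R)\to\cdots.
\]
Composing $s$ with $X\hookrightarrow X_T$ yields a section of $q$, so $q^*$ is a split monomorphism; the long exact sequence therefore breaks into split short exact sequences and $H^*_T(X;R)\cong H^*(Q;R)\oplus H^*(Q;\ca C)$.

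\emph{Step 3 (identifying the second summand).} The complex $\ca C$ is supported on $\partial Q$, and over each closed proper face of $Q$ the complex $Rq_*\underline R$ is precisely the one occurring in the homology-polytope case, since all proper faces of $Q$ are acyclic. Thus the Masuda--Panov computation (behind their Proposition quoted above) applies stratum by stratum and globalizes over $\partial Q$---here the fact that $K_Q$ is a homology manifold, together with Novik--Swartz theory, controls the global behaviour---identifying $H^*(Q;\ca C)$ with the augmentation ideal $\overline{R[K_Q]}$ of the Stanley--Reisner ring. Since $H^*(Q;R)=R\oplus\Hr^*(Q;R)$ and $R[K_Q]=R\oplus\overline{R[K_Q]}$, the isomorphism of Step 2 rearranges to
\[
H^*_T(X;R)\cong R[K_Q]\oplus\Hr^*(Q;R),
\]
which is exactly $R[K_Q]\oplus H^*(Q;R)$ with the two units identified.

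\emph{Step 4 (ring structure, and the main obstacle).} To promote the additive statement to a ring isomorphism I would note that $q^*\colon H^*(Q;R)\to H^*_T(X;R)$ and the Davis--Januszkiewicz homomorphism $R[K_Q]\to H^*_T(X;R)$, $v_i\mapsto v_i$ (the equivariant class of the characteristic submanifold $X_i=p^{-1}(\F_i)$), are both ring maps, so it suffices to see that their positive-degree images annihilate one another. Every positive-degree element of $R[K_Q]$ is a combination of monomials $v_I$, $\varnothing\neq I\in K_Q$, and $v_I$ is the image of a relative class supported on $p^{-1}(F_I)$ for the corresponding proper face $F_I$; on a face-retractible neighbourhood $p^{-1}(U)$ of $p^{-1}(F_I)$ the class $q^*(\beta)$ with $\beta\in\Hr^{>0}(Q;R)$ is the pullback of $\beta|_{F_I}\in\Hr^{>0}(F_I;R)=0$, hence vanishes there, so $q^*(\beta)\cdot v_I=0$. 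The genuinely hard part is Step 3: making rigorous the constructible splitting $\underline R\hookrightarrow Rq_*\underline R$, identifying $H^*(Q;\ca C)$ with $\overline{R[K_Q]}$ compatibly with the classes $v_i$, and excluding exotic extensions. This is where the hypotheses that $Q$ be connected and orientable enter, via Poincar\'e--Lefschetz duality for the homology manifold $K_Q$ and for the pair $(X,p^{-1}(\partial Q))$.
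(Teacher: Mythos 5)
The paper does not prove this proposition; it is quoted verbatim from the cited reference~\cite{AMPZ}, so there is no ``paper's own proof'' to compare your argument against. I will therefore evaluate your proposal on its own terms and against the approach actually taken in~\cite{AMPZ}.

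Your route via the derived pushforward $Rq_*\underline R$ and the short exact sequence of constructible complexes is genuinely different from the argument in~\cite{AMPZ}, which works with the canonical model and a Mayer--Vietoris/face-filtration analysis of the Borel construction (building directly on the Masuda--Panov computation for homology polytopes), rather than with sheaf-theoretic machinery. Your Steps~1 and~2 are sound: the section does trivialize the free part, the identification of $X$ with the canonical model is standard, the stalk computation ($ET/T_F\simeq BT_F$, contractible generic fibre), the exact triangle $\underline R\to Rq_*\underline R\to\ca C$, and the splitting of $q^*$ coming from the section are all correct. Your Step~4 observation that positive-degree classes pulled back from $Q$ annihilate the monomials $v_I$ (because $\beta|_{F_I}=0$ by acyclicity) is also a clean way to see the product structure once the additive picture is in place.

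The real gap is Step~3, and you are right to flag it, but the issue is more specific than you suggest. The claim that ``the Masuda--Panov computation applies stratum by stratum and globalizes'' papers over the fact that $R[K_Q]$ is \emph{not} a direct sum of contributions from individual strata; it is an inverse limit $R[K_Q]\cong\lim_{I\in K_Q}R[v_i:i\in I]$ over the simplicial poset. To identify $H^*(Q;\ca C)$ with $\overline{R[K_Q]}$ you must show that the hypercohomology spectral sequence of the face stratification of $\partial Q$ with coefficients in $\ca C$ degenerates and that the resulting inverse system has vanishing higher derived limits. This is precisely where the hypothesis that all proper faces of $Q$ are acyclic does its work, and it requires an argument (for instance, an induction on the dimension of faces with a Mayer--Vietoris step, as in~\cite{AMPZ}), not just a citation. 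Moreover, you invoke ``Novik--Swartz theory'' and Poincar\'e--Lefschetz duality in Step~3, but those are irrelevant here: they enter only for the \emph{ordinary} cohomology $H^*(X)$ (Proposition~\ref{propCohomAlmPoly}), not for the equivariant cohomology statement, which requires only the acyclicity of proper faces and the existence of a section. Orientability of $Q$ is likewise not used for $H^*_T$; including it as a load-bearing hypothesis misreads which part of the theory it supports. Until Step~3 is made precise, the proposal is an outline rather than a proof.
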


\begin{prop}[Ayzenberg \cite{Ay2}]\label{propCohomAlmPoly}
Assume $Q$ is an orientable connected almost polytope and the
projection map $p\colon X\to Q$ admits a section.

(1) Let $A^*(X;R)$ be the subring in $H^*(X;R)$ generated by the
classes $v_i$ of charcteristic submanifolds. There exists a
sequence of epimorphisms
\[
R[K_Q]/\Theta\twoheadrightarrow A^*(X;R) \twoheadrightarrow
R[K_Q]/\Theta/I_{NS}.
\]
The component $A^{2j}(X;R)$ is additively generated by classes
$v_I$, $I\in K_Q$, $|I|=j$. The relations on these classes in
$A^{2j}(X;R)$ for $j<n$ have the form \eqref{eqRelsSR}, where
$\sigma\in \Ca^{j-1}(K;R)\cong \Ca_{n-j}(\dd Q;R)$ runs over all
cellular chains, which vanish in $H_{n-j}(Q;R)$.

(2) The submodule $A^+=\bigoplus_{j>0}A^{2j}(X;R)$ is an ideal in
$H^*(X;R)$. There is an isomorphism of graded rings
\[
H^*(X)/A^+\cong \left(\bigoplus_{i<j}H^i(Q,\dd Q)\otimes
H^j(T^n)\right)\oplus \left(\bigoplus_{i\geqslant j}H^i(Q)\otimes
H^j(T^n)\right).
\]
All nontrivial products on the right hand part are given by cup
products in cohomology and relative cohomology.
\end{prop}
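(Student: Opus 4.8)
The plan is to deduce everything from the equivariant computation and then descend to ordinary cohomology along the Borel fibration $X\hookrightarrow X_T\to BT$. First I would invoke Proposition~\ref{propEquivCohomAlmPoly}: since $Q$ is an orientable connected almost polytope and $p$ admits a section, $H^*_T(X;R)\cong R[K_Q]\oplus H^*(Q;R)$ as graded rings (units identified), where the subring $R[K_Q]$ is generated by the equivariant classes $v_i^T\in H^2_T(X;R)$ of the characteristic submanifolds and the summand $H^*(Q;R)$ is pulled back from the orbit space, hence annihilated by $H^{>0}(BT)$. The restriction-to-fibre homomorphism $\rho\colon H^*_T(X;R)\to H^*(X;R)$ is a ring map with $\rho(v_i^T)=v_i$, so $\rho\bigl(R[K_Q]\bigr)=A^*(X;R)$; and since $\rho$ kills the parameters $\theta_1,\dots,\theta_n$ coming from $H^2(BT)$, it factors as $R[K_Q]\twoheadrightarrow R[K_Q]/\Theta\twoheadrightarrow A^*(X;R)$. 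By Proposition~\ref{propMyRelations} the middle term $R[K_Q]/\Theta$ is additively generated in degree $2j$ by the monomials $v_I$, $I\in K_Q$, $|I|=j$, and this passes to the quotient $A^*(X;R)$. This already establishes the first epimorphism in~(1) and the additive generation statement.

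Next I would identify the kernel of $R[K_Q]/\Theta\twoheadrightarrow A^*(X;R)$, that is, the extra relations among the $v_I$ holding in $H^*(X;R)$. The tool is the Serre spectral sequence of $X\hookrightarrow X_T\to BT$: after passing to the $\Theta$-quotient the surviving differentials measure exactly the failure of $\theta_1,\dots,\theta_n$ to be a regular sequence in $R[K_Q]$ — they are only a weak regular sequence, because $K_Q$ is Buchsbaum and not Cohen--Macaulay. Combining this obstruction with the combinatorial form~\eqref{eqRelsSR} of the relations from Proposition~\ref{propMyRelations}, a degree-$2j$ relation carried by a cochain $\sigma\in\Ca^{j-1}(K_Q;R)$ should survive in $A^{2j}(X;R)$ precisely when $\sigma$, viewed under the identification $\Ca^{j-1}(K_Q;R)\cong\Ca_{n-j}(\dd Q;R)$ as an $(n-j)$-chain on $\dd Q$, bounds inside $Q$, i.e.\ vanishes in $H_{n-j}(Q;R)$. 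Coboundaries of $K_Q$ correspond to chains bounding already inside $\dd Q$, so they lie in this subspace, while cocycles correspond to all cycles of $\dd Q$, a strictly larger subspace in general; hence the relation ideal of $A^*(X;R)$ sits between that of $R[K_Q]/\Theta$ and that of $R[K_Q]/\Theta/I_{NS}$ (the latter being the cocycle relations by Proposition~\ref{propMyRelations}(2)). That yields the sequence of epimorphisms in~(1).

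For part~(2) I would pass to the free part $X_0=p^{-1}(\operatorname{int}Q)\cong\operatorname{int}Q\times T^n$ and the restriction $r\colon H^*(X;R)\to H^*(X_0;R)\cong H^*(Q;R)\otimes H^*(T^n;R)$. Every $X_i$ lies over $\dd Q$, so $r(v_i)=0$ and $A^+\subseteq\ker r$; conversely, excision together with the Thom isomorphism for regular neighbourhoods of the $X_i$ and of their intersections shows that the image of $H^*(X,X_0;R)\to H^*(X;R)$ is exactly the ideal generated by the $v_i$, which coincides with $A^+$. Hence $\ker r=A^+$, so $A^+$ is an ideal and $H^*(X;R)/A^+\cong\operatorname{im}r$. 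To compute $\operatorname{im}r$ I would run the long exact sequence of the pair $(X,X_0)$, using Lefschetz duality for the compact manifold-with-boundary obtained by deleting an open collar of $p^{-1}(\dd Q)$ and the product structure on $X_0$ to identify the connecting map $H^i(Q;R)\otimes H^j(T^n;R)\to H^{i+j+1}(X,X_0;R)$: a class survives unchanged when $i\geqslant j$, and for $i<j$ it is replaced by its image under $H^i(Q;R)\to H^i(Q,\dd Q;R)$, which produces the stated graded vector space. Since $r$ is multiplicative, products of classes with $i\geqslant j$ are the ordinary cup products inherited from $H^*(Q;R)\otimes H^*(T^n;R)$, while any product that leaves that range is forced into the relative summand and is computed by the relative cup product $H^i(Q,\dd Q)\times H^{i'}(Q)\to H^{i+i'}(Q,\dd Q)$, exactly as asserted.

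I expect Step~2 to be the main obstacle: extracting, from the Buchsbaum (non-Cohen--Macaulay) behaviour of $R[K_Q]$ and the spectral sequence differentials, the exact subspace of surviving relations — equivalently, proving that a boundary cycle of $Q$ contributes a relation among the $v_I$ if and only if it already bounds in $Q$ — is the one genuinely delicate homological-algebra point; the rest is bookkeeping with known structural results. A secondary difficulty, in Step~3, is pinning down the connecting homomorphisms of the pair $(X,X_0)$ precisely enough to see the jump from $H^*(Q)$ to $H^*(Q,\dd Q)$ across the diagonal $i=j$ and to verify the relative cup-product formula.
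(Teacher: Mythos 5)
This proposition is quoted from \cite{Ay2}; the present paper gives no proof of its own, so there is nothing here to compare against line by line. Your route through the Borel fibration $X\hookrightarrow X_T\to BT$ and the restriction-to-fibre map differs from the proof in \cite{Ay2}, which analyses the orbit-type filtration of $X$ (the preimages of the skeleta of $Q$) and the homology long exact sequences it produces.

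Evaluated on its own terms, your sketch does obtain the first epimorphism $R[K_Q]/\Theta\twoheadrightarrow A^*(X;R)$ cleanly: restriction to the fibre kills the image of $H^{>0}(BT)$, hence $\Theta$, and carries $R[K_Q]$ onto the subring $A^*$ generated by the $v_i$. But the step you flag as the main obstacle --- pinning down the kernel of this epimorphism as precisely the Minkowski-type relations indexed by $(n-j)$-cycles of $\dd Q$ bounding in $Q$ --- is a genuine gap, not bookkeeping. For an almost polytope the Borel spectral sequence does not degenerate, and you give no mechanism for computing its differentials; ``the surviving differentials measure the failure of $\Theta$ to be a regular sequence'' restates the problem rather than solving it. One actually has to show that $\sum_I\langle\omega,\lambda_I\rangle\sigma(I)\,v_I$ vanishes in $H^{2j}(X;R)$ \emph{if and only if} the cellular chain $\sigma$ bounds in $Q$, and the nontrivial direction requires producing a geometric bounding chain in $X$ (or, equivalently, an argument inside the orbit-type filtration), which your Borel-fibration framework leaves unprovided. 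Similarly, in part~(2) the identity $\ker r=A^+$ for the restriction $r$ to the free stratum $X_0=p^{-1}(Q\setminus\dd Q)\cong (Q\setminus\dd Q)\times T^n$ needs more than excision and one Thom class --- one must control $H^*(X,X_0)$ via a Mayer--Vietoris decomposition over the full face poset --- and the identification of the connecting map of $(X,X_0)$ with $H^i(Q)\to H^i(Q,\dd Q)$ exactly in the range $i<j$ is asserted rather than derived. The scaffolding of your argument is reasonable, but the two steps you defer are the substance of the theorem.
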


Now we apply this technique to 6-dimensional manifold
$M_{\St_3,\lambda}$ of isospectral arrow $4\times 4$-matrices.
According to the results of Sections \ref{secArrowMatricesOrbits}
and \ref{secArrowMatricesBlock}, the orbit space
$Q_3=M_{\St_3,\lambda}/T^3$ is a manifold with corners,
homeomorphic to $D^2\times S^1$, and its boundary is subdivided
into hexagons as shown on Fig.\ref{pictTorusAndDual}. Therefore
$\Qs$ is an almost polytope. Note that the map $p\colon
M_{\St_3,\lambda}\to Q_3$ admits a section. Indeed, $Q_3$ may be
identified with the space of arrow matrices, which have
nonnegative off-diagonal elements, which is the natural subset of
$Q_3$.

\begin{figure}
\begin{center}
\includegraphics[scale=0.25]{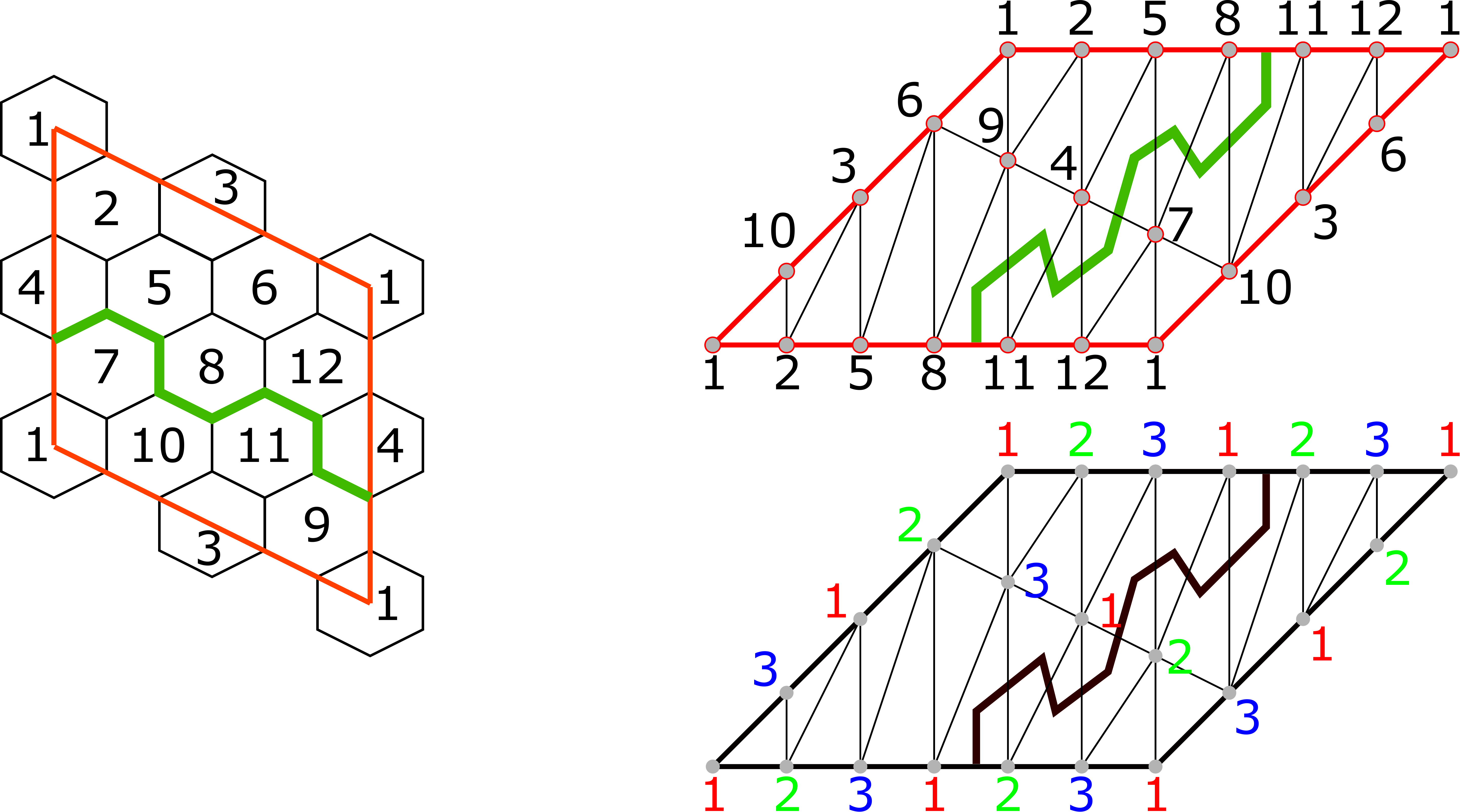}
\end{center}
\caption{Combinatorics of the boundary $\dd Q_3$, and its dual
simplicial complex. The bold line denotes the cycle in $\dd
Q_3\cong T^2$ contractible inside the solid torus $Q_3$. The right
bottom figure shows the proper coloring of vertices of
$\ca{P}_{\St_3}^*$ (i.e. 2-faces of
$Q_3$)}\label{pictTorusAndDual}
\end{figure}

The simplicial complex $\ca{P}_{\St_3}^*$ dual to $Q_3$ is the
triangulation of a $2$-torus with $12$ vertices, shown on
Fig.\ref{pictTorusAndDual} on the right. It can be seen that it
$f$-vector is $(f_{-1}=1,f_0=12,f_1=36,f_2=24)$, $h$-vector is
$(1,9,15,-1)$, and $h'$-vector is $(1,9,15,1)$.

All stabilizers of the action $T^3\curvearrowright
M_{\St_3,\lambda}$ are the coordinate subtori in $T^3$. Therefore,
the characteristic function $\lambda$ of the action is the
chromatic function. It comes from the proper coloring of vertices
of $\ca{P}_{\St_3}^*$ indicated on Fig.\ref{pictTorusAndDual},
bottom right. Propositions \ref{propEquivCohomAlmPoly} and
\ref{propCohomAlmPoly}, applied to $M_{\St_3,\lambda}$ give the
following result.


\begin{thm}\label{thmM3cohomology}
There holds $H^*_T(M_{\St_3,\lambda};R)\cong
R[\ca{P}_{\St_3}^*]\oplus H^*(S^1;R)$. The subring
$A^*(M_{\St_3,\lambda};R)\subset H^*(M_{\St_3,\lambda};R)$
generated by the classes $v_i$ of the characteristic submanifolds
has the form
\[
A^*=A^*(M_{\St_3,\lambda};R)=R[\ca{P}_{\St_3}^*]/\Theta/\ca{I},
\mbox{ where }
\]
\begin{itemize}
\item The ideal $\Theta$ of the Stanley--Reisner ring $R[\ca{P}_{\St_3}^*]$ is
generated by the linear forms $\theta_1=v_1+v_3+v_4+v_8$,
$\theta_2=v_5+v_9+v_{10}+v_{12}$, $\theta_3=v_2+v_6+v_7+v_{11}$.
The numeration of vertices is as shown on
Fig.\ref{pictTorusAndDual}.
\item The ideal $\ca{I}$ is additively generated by the elements
\[
v_8v_{11}+v_7v_8+v_4v_7+v_4v_{11},\quad v_8v_{10}+v_4v_{12},\quad
v_5v_7+v_9v_{11}
\]
(the choice of these elements is noncanonical).
\end{itemize}
Graded components of the subring $A^*$ have dimensions
$(1,0,9,0,12,0,1)$.

The quotient ring $H^*(M_{\St_3,\lambda})/A^+$ has the following
nonempty components: $R$ in degree $0$, $R$ in degree $1$, $R^3$
in degree $2$, and $R$ in degree $5$. The product in the quotient
ring $H^*(M_{\St_3,\lambda})/A^+$ is trivial.

Integral cohomology of $\Xs$ are torsion free, Betti numbers are
$(1,1,12,0,12,1,1)$.
\end{thm}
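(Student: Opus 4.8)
The plan is to feed the combinatorial picture of $Q_3$ obtained in Sections \ref{secArrowMatricesOrbits}--\ref{secArrowMatricesBlock} into Propositions \ref{propEquivCohomAlmPoly}, \ref{propCohomAlmPoly} and \ref{propMyRelations}. First I would check that $Q_3$ is an orientable connected \emph{almost polytope}: by Example \ref{exCase3orbitSpace} it is homeomorphic to the solid torus $D^2\times S^1$, its proper faces are hexagons, intervals and points (all acyclic), while $Q_3\simeq S^1$ is not acyclic; moreover $p\colon M_{\St_3,\lambda}\to Q_3$ admits a section, namely the slice of arrow matrices with nonnegative off-diagonal entries. Proposition \ref{propEquivCohomAlmPoly} then gives at once $H^*_T(M_{\St_3,\lambda};R)\cong R[\ca{P}_{\St_3}^*]\oplus H^*(S^1;R)$.

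Next I would describe the dual simplicial complex $K=\ca{P}_{\St_3}^*$ explicitly: it is the triangulated $2$-torus of Fig.\ref{pictTorusAndDual}, so $f=(1,12,36,24)$, $h=(1,9,15,-1)$ by \eqref{eqHvecDefin}, and, since $K$ is connected with $\br_1(K)=2$, the formulas \eqref{eqDefHprime}, \eqref{eqDefHtwoprimes} give $h'=(1,9,15,1)$ and $h''=(1,9,9,1)$. Because all isotropy subgroups of $T^3\curvearrowright M_{\St_3,\lambda}$ are coordinate subtori, the characteristic function is the chromatic one attached to the proper $3$-coloring of the vertices of $K$ drawn on Fig.\ref{pictTorusAndDual}, so $\theta_j=\sum_{c(i)=j}v_i$ produces exactly the three linear forms in the statement; the Stanley--Reisner--Schenzel and Novik--Swartz results then give $\Hilb(R[K]/\Theta;t)=1+9t^2+15t^4+t^6$ and $\Hilb(R[K]/\Theta/I_{NS};t)=1+9t^2+9t^4+t^6$.

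I would compute the subring $A^*=A^*(M_{\St_3,\lambda};R)$ from Proposition \ref{propCohomAlmPoly}(1). In degrees $0$ and $6$ the epimorphisms $R[K]/\Theta\twoheadrightarrow A^*\twoheadrightarrow R[K]/\Theta/I_{NS}$ are isomorphisms ($h'_0=h''_0=h'_3=h''_3=1$), and in degree $2$ one has $(I_{NS})_2\cong\binom{3}{1}\Hr^0(K;R)=0$, so $\dim A^0=\dim A^6=1$ and $\dim A^2=9$. The real point is degree $4$: by Proposition \ref{propCohomAlmPoly}(1) all relations on the classes $v_I$ with $|I|=2$ have the form \eqref{eqRelsSR} with $\omega$ ranging over $(\Lambda^2R^3)^*$ and $\sigma$ over $1$-cycles of $\dd Q_3$ null-homologous in $Q_3$, the coboundary $\sigma$'s reproducing the relations already imposed in $R[K]/\Theta$. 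Since $H_1(\dd Q_3)=\Zo^2\to H_1(Q_3)=\Zo$ has rank-one kernel generated by the meridian of the solid torus, fixing a cellular loop $\gamma$ representing the meridian and running $\omega$ over the dual basis of $(\Lambda^2R^3)^*$ — equivalently, grouping the edges of $\gamma$ according to the color pair of their endpoints — produces precisely the three quadratic elements listed as generators of $\ca{I}$; a different cellular representative of the meridian changes these only modulo $\Theta$ and modulo relations valid in $R[K]/\Theta$, so $\ca{I}$ is well defined as an ideal. Independence of the three relations in $(R[K]/\Theta)_4$ — a special case of the Novik--Swartz isomorphism $(I_{NS})_{2j}\cong\binom{n}{j}\Hr^{j-1}(K;R)$, or a direct check on the explicit generators — then yields $\dim A^4=15-3=12$, so $A^*\cong R[\ca{P}_{\St_3}^*]/\Theta/\ca{I}$ with graded dimensions $(1,0,9,0,12,0,1)$.

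Finally I would apply Proposition \ref{propCohomAlmPoly}(2) with $Q=Q_3$: using $H^*(Q_3)=H^*(S^1)$, the Lefschetz isomorphism $H^k(Q_3,\dd Q_3)\cong H_{3-k}(S^1)$ and $H^*(T^3)$, the only nonzero summands of the model are $H^0(Q_3)\otimes H^0(T^3)$, $H^1(Q_3)\otimes H^0(T^3)$, $H^1(Q_3)\otimes H^1(T^3)$ and $H^2(Q_3,\dd Q_3)\otimes H^3(T^3)$, sitting in degrees $0,1,2,5$ with dimensions $(1,1,3,0,0,1,0)$; every cup product among them lands in a summand not retained by the model (geometrically, in $A^+$), so $H^*(M_{\St_3,\lambda})/A^+$ has trivial multiplication. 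Combining $\dim H^i=\dim A^i+\dim(H^*/A^+)^i$ for $i>0$ gives Betti numbers $(1,1,12,0,12,1,1)$, and torsion-freeness holds because Propositions \ref{propMyRelations}, \ref{propEquivCohomAlmPoly}, \ref{propCohomAlmPoly} are valid over $\Zo$ while $K$, $Q_3$, $\dd Q_3$, $T^3$ have torsion-free integral (co)homology and $\lambda$ is an integral chromatic function. I expect the \emph{main obstacle} to be this degree-$4$ computation: reading the combinatorics of $K$ off Fig.\ref{pictTorusAndDual}, pinning the meridian down as an explicit cellular $1$-cycle, and verifying that the three resulting quadratic relations are linearly independent.
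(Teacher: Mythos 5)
Your proposal follows essentially the same route as the paper's proof: it verifies that $Q_3$ is an orientable connected almost polytope admitting a section, invokes Propositions \ref{propEquivCohomAlmPoly} and \ref{propCohomAlmPoly}, identifies the ideal $\ca{I}$ with the Minkowski-type relations coming from the meridian cycle of the solid torus (the unique basis cycle of $\dd Q_3$ vanishing in $H_1(Q_3)$), and reads off the remaining cohomology from the model in Proposition \ref{propCohomAlmPoly}(2). You supply more intermediate detail (the $h$-, $h'$-, $h''$-computations, the explicit tensor-product bookkeeping for $H^*/A^+$, and a Novik--Swartz argument for independence of the three quadratic relations) than the paper, which defers most of this to the cited propositions and a picture; the only place your argument is looser than the paper's is torsion-freeness, where the paper explicitly says a direct computation via Proposition \ref{propMyRelations}(1) is required, whereas integrality of the cited propositions alone does not by itself rule out torsion in the quotient $\Zo[K]/\Theta/\ca{I}$.
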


\begin{proof}
Two statements require an explanation: the form of generators of
the ideal $\ca{I}$, and the torsion freeness of cohomology.
According to Proposition \ref{propCohomAlmPoly}, the relations on
classes $v_I=v_{i_1}\cdots v_{i_k}\in H^*(\Xs)$ are given by all
possible skew forms and all possible cellular cycles in $\dd\Qs$,
which vanish in homology $\Qs$. Each such pair gives a relation
$\sum_{I}\sigma(I)\langle\omega,\lambda_I\rangle v_I$. There is a
unique basis cycle in $\dd\Qs$, which is homologous to zero in
$\Qs$. This cycle can be recognized by analyzing the image of the
moment map (see.\ref{pictNecklace}). The vanishing cycle is shown
on Fig.\ref{pictTorusAndDual}.

Torsion freeness of $\Zo[K]/\Theta/\ca{I}$ can be checked by
direct computation based on part~1 of Proposition
\ref{propMyRelations}.
\end{proof}

%

\section{The twin manifold of $M_{\St_n,\lambda}$}\label{secTwin}

In \cite{ABhess} we introduced a notion of a twin manifold in the
variety of complete complex flags. Given a smooth $T$-invariant
submanifold $X\subset M_{\lambda}\cong \Fl_{n}$, we construct
another smooth $T$-invariant submanifold
$\tilde{X}=p_lp_r^{-1}(X)$, called the \emph{twin} of $X$, where
$p_l\colon U(n)\to\Fl_n$ (resp.$p_r\colon U(n)\to\Fl_n$) is the
quotient map defined by the left free action of a torus on $U(n)$
(resp. right action):
\begin{equation}\label{eqTwoMaps}
\Fl_n\cong T^n\backslash
\raisebox{1pt}{$U(n)$}\stackrel{p_l}{\longleftarrow}U(n)\stackrel{p_r}{\longrightarrow}
\raisebox{1pt}{$U(n)$}/T^n\cong \Fl_n.
\end{equation}

It can be seen that $\tilde{X}/T\cong X/T$ since both spaces are
homeomorphic to the double quotient $T\backslash
p_lp_r^{-1}(X)/T$. However, the characteristic data of the
$T$-manifolds $X$ and $\tilde{X}$ are different in general. The
twins may be nondiffeomorphic.

\begin{con}
Let us describe the twin of the manifold $M_{\St_n,\lambda}$. A
complex flag in $\Co^{n+1}$ can be naturally identified with the
sequence of 1-dimensional linear subspaces
$L_0,L_1,\ldots,L_n\subset \Co^{n+1}$, which are pairwise
orthogonal: $L_i\perp L_j$, $i\neq j$. Consider a diagonalizable
operator $S\colon \Co^{n+1}\to\Co^{n+1}$ with distinct real
eigenvalues, and define the subset $X_n\subset \Fl_{n+1}$:
\[
X_n=\{\{L_i\}\in \Fl_{n+1}\mid SL_i\subset L_0\oplus L_i \mbox{
for }i\neq 0\}.
\]
The action of $T^{n+1}$ on $\Co^{n+1}$ induces an effective action
of $T^n=T^{n+1}/\Delta(T^1)$ on the space $X_n$. We may assume
$S=\Lambda=\diag(\lambda_0,\lambda_1,\ldots,\lambda_n)$.

Note that there is also an action of $\Sigma_n$ on $X_n$ which
permutes the lines $L_1,\ldots,L_n$. This action commutes with the
$T^n$-action, hence there is a combined action of the direct
product $T^n\times\Sigma_n$ on $X_n$.
\end{con}

\begin{prop}
The space $X_n$ is the twin of $M_{\St_n,\lambda}$. In particular,
$X_n$ is a smooth manifold. Its orbit space by the action of $T^n$
is isomorphic to $Q_n$ as a manifold with corners and is homotopy
equivalent to $\Sq_{n-1}$. The orbit space by the action of
$T^n\times\Sigma_n$ is diffeomorphic to the polytope $\B^n$.
\end{prop}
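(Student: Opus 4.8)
The plan is to verify that $X_n$ coincides with the twin $\widetilde{M_{\St_n,\lambda}}$ as defined via the diagram \eqref{eqTwoMaps}, and then harvest the topological consequences from results already established in the excerpt. First I would recall that $M_{\St_n,\lambda}\cong M_{\St_n,\Lambda}$ sits inside $M_\lambda\cong\Fl_{n+1}$ as the set of flags $\{L_i\}$ whose associated Hermitian operator $A$ (with $A L_i = \lambda_{?} L_i$ under the spectral identification) has arrow shape; concretely, identifying a flag with an orthonormal frame $(e_0,\dots,e_n)$ up to the torus, the matrix of the endomorphism in the standard basis is arrow-shaped precisely when $\langle \Lambda e_i, e_j\rangle = 0$ for $i,j\neq 0$, $i\neq j$. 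The twin is obtained by applying $p_l p_r^{-1}$: writing a point of $\Fl_{n+1}$ as $T^n\backslash g$ for $g\in U(n+1)$, one passes to $g/T^n = g T^n$. The content of the claim is that this operation sends the arrow-shape condition on $g^{-1}\Lambda g$ (or $g\Lambda g^{-1}$, depending on convention) to exactly the incidence condition $\Lambda L_i \subset L_0\oplus L_i$ defining $X_n$. So the main computational step is: unwind the two quotient maps in coordinates and check that the defining equations match. I expect this to be a short linear-algebra verification once the conventions on left/right torus actions are pinned down — the condition $SL_i\subset L_0\oplus L_i$ is visibly a ``one extra nonzero entry per column outside the diagonal'' condition, which is the column-version of the arrow shape, and transposition/adjoint is precisely what swapping $p_l$ and $p_r$ does.

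Once the identification $X_n = \widetilde{M_{\St_n,\lambda}}$ is in place, smoothness of $X_n$ is immediate: the excerpt's twin construction takes smooth $T$-invariant submanifolds of $\Fl_{n+1}$ to smooth $T$-invariant submanifolds (the maps $p_l,p_r$ in \eqref{eqTwoMaps} are principal $T^n$-bundle projections, so $p_l p_r^{-1}$ of a smooth submanifold is smooth), and $M_{\St_n,\lambda}$ is smooth by Theorem \ref{thmIsSmooth}. For the orbit space, I would invoke the general fact stated right after \eqref{eqTwoMaps}: $\tilde X/T \cong X/T$ because both are canonically homeomorphic to the double quotient $T\backslash p_l p_r^{-1}(X)/T$. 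Hence $X_n/T^n \cong M_{\St_n,\lambda}/T^n = Q_n$. To upgrade this to an isomorphism of manifolds with corners, I would note that $p_l$ and $p_r$ restrict to diffeomorphisms on suitable local sections, so the induced homeomorphism $X_n/T^n \to Q_n$ is in fact an isomorphism of manifolds with corners (it respects the stratification by orbit type, which is what the corner structure records). The homotopy equivalence $Q_n \simeq \Sq_{n-1}$ is then exactly the Proposition proved earlier in Section \ref{secArrowMatricesOrbits}.

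For the last assertion — that $X_n/(T^n\times\Sigma_n) \cong \B^n$ — I would observe that the $\Sigma_n$-action on $X_n$ permuting $L_1,\dots,L_n$ corresponds, under the identification with the twin, to the $\Sigma_n$-action on $M_{\St_n,\lambda}$ coming from permuting the rays of the star graph (both actions permute the ``non-central'' coordinate directions and commute with $T^n$); hence the induced $\Sigma_n$-actions on $X_n/T^n$ and on $Q_n$ agree under the corner-isomorphism $X_n/T^n\cong Q_n$. Then $X_n/(T^n\times\Sigma_n) = (X_n/T^n)/\Sigma_n \cong Q_n/\Sigma_n$, which is diffeomorphic to $\B^n$ by the Proposition of Section \ref{secArrowMatricesBlock} identifying the fundamental domain of the $\Sigma_n$-action on $Q_n$ with $\B^n$. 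The main obstacle, and the only step requiring genuine care, is the equivariance bookkeeping in the coordinate check of the first paragraph: making sure that the left-versus-right torus quotient really is the operation that converts the row-arrow condition into the column-incidence condition defining $X_n$, and that the $\Sigma_n$-actions are correctly matched; everything after that is a formal transport of results already proved.
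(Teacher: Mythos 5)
Your proposal follows the same route as the paper: recall the twin construction, unwind the coordinate description to show that the arrow condition on $U^{-1}\Lambda U$ translates to $\Lambda L_i \subset L_0 \oplus L_i$ for $L_i = U\langle e_i\rangle$, then transport smoothness, the identification of orbit spaces, and the $\Sigma_n$-quotient from the earlier sections. The only difference is that the paper actually carries out the short coordinate check you defer (writing $A=U^{-1}\Lambda U$ and noting that the arrow shape is precisely $U^{-1}\Lambda U(e_i)\subset\langle e_0,e_i\rangle$ for $i\neq 0$), while you supply slightly more explicit reasoning about the corner structure and the matching of $\Sigma_n$-actions that the paper leaves to the reference \cite{ABhess}.
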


\begin{proof}
We recall the construction of \cite{ABhess}. For a Hermitian
matrix $A$, consider its spectral decomposition $A=U^{-1}\Lambda
U$. Here the unitary operator $U$ is defined up to left
multiplication by diagonal matrices. Given a subspace $X\subset
M_\lambda$ of matrices with the given spectrum, such that $X$ is
preserved by the torus action, we consider the twin space
\[
\tilde{X}=\{A\in M_\lambda\mid A=U\Lambda U^{-1}, \mbox{ where }
U^{-1}\Lambda U\in X\}
\]
Let $e_0,e_1,\ldots,e_n$ be the standard basis $\Co^{n+1}$. Then
$\tilde{X}$ is identified with the collection of flags
\[
U\langle e_0\rangle\subset U\langle e_0,e_1\rangle\subset\cdots
U\langle e_0,e_1,\ldots,e_n\rangle
\]
for all possible unitary matrices $U\in U(n)$ with the property
$U^{-1}\Lambda U\in X$.

Let $L_i=U\langle e_i\rangle$. The condition $U^{-1}\Lambda U\in
M_{\St_n,\lambda}$ is equivalent to
\[
U^{-1}\Lambda U(e_i)\subset \langle e_0,e_i\rangle,\mbox{ for
}i\neq 0,
\]
which is the same condition as
\[
\Lambda L_i\subset L_0\oplus L_i.
\]
Hence the twin of $M_{\St_n,\lambda}$ is exactly $X_n$. Since
$M_{\St_n,\lambda}$ is smooth, so is $X_n$. The orbit spaces of a
manifold and its twin coincide (see \cite{ABhess} for details).
\end{proof}

\begin{rem}
In \cite{ABhess} we noticed that Hessenberg varieties are the
twins of manifolds of staircase isospectral matrices. Note that
unlike this case, the twin $X_n$ of $M_{\St_n,\lambda}$ is not an
algebraic subvariety in $\Fl_{n+1}$.
\end{rem}

\begin{rem}
The manifold $M_{\St_{n},\lambda}$ is a submanifold of
$M_{\St_{n}}\cong \Ro^{3n+1}$ defined by a system of smooth
functions with nondegenerate intersections of level surfaces.
Hence $M_{\St_{n},\lambda}$ has trivial normal bundle, and all its
Pontryagin classes and numbers vanish. However, this may not be
the case for its twin $X_n$. This makes the twin $X_n$ even more
interesting object from topological point of view.
\end{rem}

\begin{con}
Let us describe the characteristic function on $X_n$. Recall that
the facets of the orbit space $X_n/T\cong Q_n$ are encoded by
clusters submitted to the partitions
$\{\{j\},\{0,1,\ldots,\widehat{j},\ldots,n\}\}$, $j\neq 0$, see
Section \ref{secTreeMatricesCombinatorics}. A cluster is given by
\[
\{\{p(j)\},\{p(0),p(1),\ldots,\widehat{p(j)}, \ldots,p(n)\}\}
\]
for a bijection $p\colon \{0,1,\ldots,n\} \to \{0,1,\ldots,n\}$.
The facet $F_{[p]}$ corresponding to this cluster, consists of
flags $\{L_i\}\in X_n$ such that $L_j=e_{p(j)}$. These flags are
stabilized by the circle subgroup $T_{p(j)}\subset T^n$, which is
the image of $p(j)$-th coordinate circle of $T^{n+1}$ in the
quotient $T^n=T^{n+1}/\Delta(T^1)$. In particular, the
characteristic function of $X_n$ takes $n+1$ values. This function
is not chromatic.
\end{con}

\begin{figure}
\begin{center}
\includegraphics[scale=0.3]{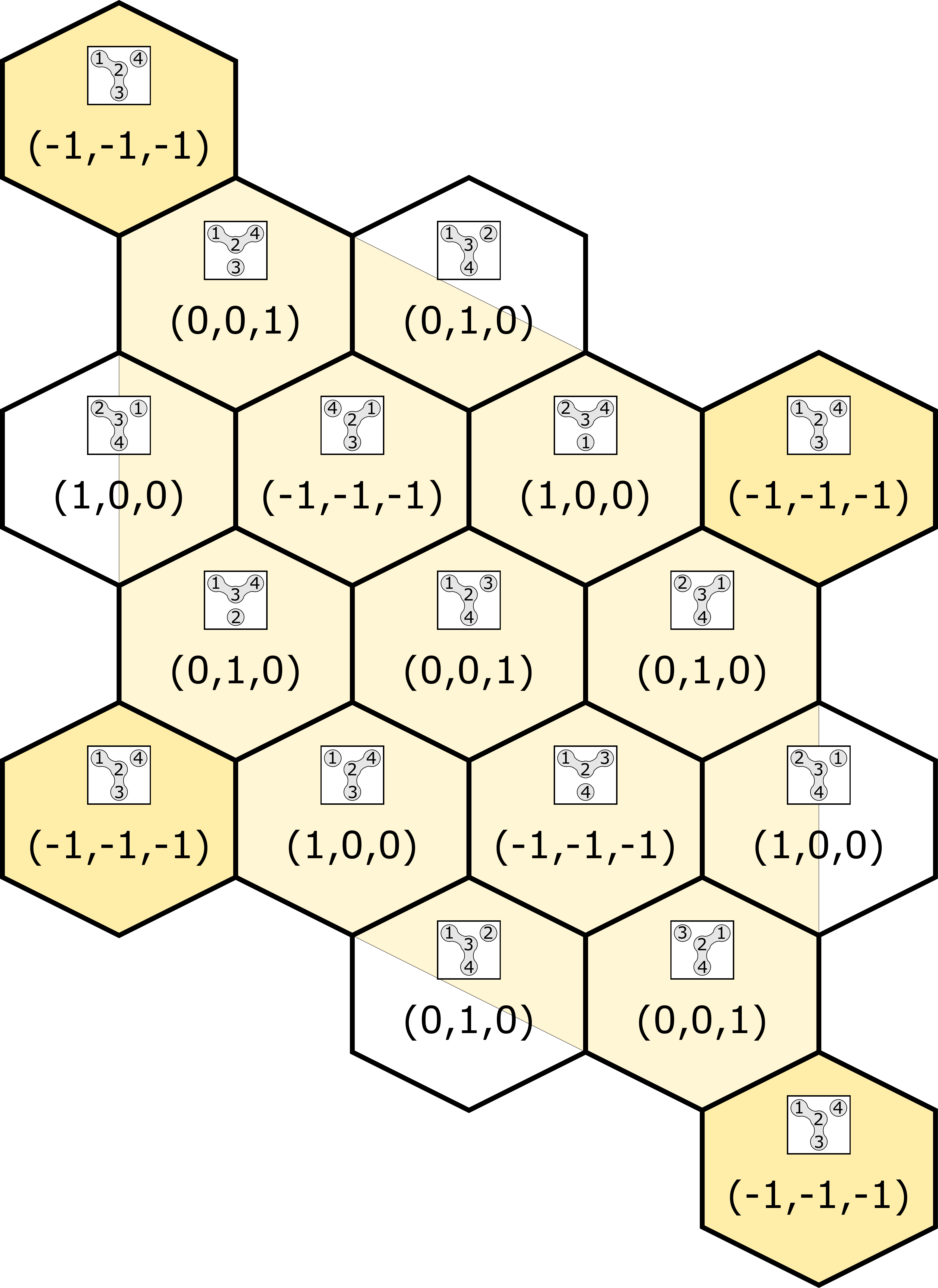}
\end{center}
\caption{The characteristic function for the twin of
$M_{\St_3,\lambda}$}\label{pictHexTorusTwin}
\end{figure}

\begin{ex}
The values of characteristic function on $X_3$ are shown on
Fig.\ref{pictHexTorusTwin}. The characteristic function takes 4
values in $\Zo^3$ which sum to zero, hence we can assume that its
values are $(1,0,0)$, $(0,1,0)$, $(0,0,1)$, and $(-1,-1,-1)$.
These values come from a proper 4-coloring of facets of the
hexagonal subdivision of a torus. The coloring of a facet is
determined by the number standing on a separate vertex of cluster
encoding the facet.

Note that the quotient map $X_3\to X_3/T\cong Q_3$ admits a
section. Indeed, the space $Q_3\cong S^1$ has no second
cohomology, hence every principal $T$-bundle over the interior of
$Q_3$ is trivial. The following proposition is completely similar
to Theorem \ref{thmM3cohomology}.
\end{ex}

\begin{prop}\label{propX3cohomology}
There holds $H^*_T(X_3;R)\cong R[\ca{P}_{\St_3}^*]\oplus
H^*(S^1;R)$. The subring $A^*(X_3;R)\subset H^*(X_3;R)$ generated
by the classes $v_i$ of the characteristic submanifolds has the
form
\[
A^*=A^*(X_3;R)=R[\ca{P}_{\St_3}^*]/\Theta/\ca{I}, \mbox{ where }
\]
\begin{itemize}
\item The ideal $\Theta$ of the Stanley--Reisner ring $R[\ca{P}_{\St_3}^*]$ is
generated by the linear forms
$\theta_1=v_4+v_6+v_{10}-v_1-v_5-v_{11}$,
$\theta_2=v_3+v_7+v_{12}-v_1-v_5-v_{11}$,
$\theta_3=v_2+v_8+v_9-v_1-v_5-v_{11}$ under the numeration of
vertices shown on Fig.\ref{pictTorusAndDual}.
\item The ideal $\ca{I}$ is additively generated by the elements
\[
v_5v_7-v_7v_8+v_8v_{11}-v_{11}v_{12},\quad
v_8v_{10}-v_8v_{11}+v_4v_{11}-v_4v_9,\quad
v_4v_7-v_5v_7+v_{11}v_{12}-v_4v_{11}
\]
(the choice of these elements is noncanonical).
\end{itemize}
Graded components of the subring $A^*$ have dimensions
$(1,0,9,0,12,0,1)$. Integral cohomology of $\Xs$ are torsion free,
Betti numbers are $(1,1,12,0,12,1,1)$.
\end{prop}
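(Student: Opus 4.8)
The plan is to reproduce the proof of Theorem~\ref{thmM3cohomology} line by line, the point being that $X_3/T^3$ is the \emph{same} manifold with corners $Q_3$ --- an orientable, connected almost polytope whose dual simplicial complex is $K_Q=\ca{P}_{\St_3}^*$ --- and that a section of $X_3\to Q_3$ has already been produced in the preceding example (using $H^2(Q_3;R)=0$). First I would verify that the hypotheses of Propositions~\ref{propEquivCohomAlmPoly} and~\ref{propCohomAlmPoly} hold verbatim: orientability and connectedness of $Q_3$, acyclicity of all its proper faces (hexagons, edges, points), and the section. Then Proposition~\ref{propEquivCohomAlmPoly} together with $Q_3\simeq S^1$ yields $H^*_T(X_3;R)\cong R[\ca{P}_{\St_3}^*]\oplus H^*(S^1;R)$ with no further work.

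The genuinely new ingredient is the characteristic function, described in the construction above. I would read off from Fig.~\ref{pictHexTorusTwin} the $4$-colouring of the twelve facets of $Q_3$ (equivalently, of the vertices of $K_Q$), assign to the four colours the vectors $(1,0,0)$, $(0,1,0)$, $(0,0,1)$, $(-1,-1,-1)$, and set $\theta_j=\sum_i\lambda_{i,j}v_i$. The colour carrying the value $(-1,-1,-1)$ contributes a common negative block $-(v_1+v_5+v_{11})$ to all three $\theta_j$, which is precisely what produces the stated forms $\theta_1=v_4+v_6+v_{10}-v_1-v_5-v_{11}$ and the analogues $\theta_2,\theta_3$. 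That $\theta_1,\theta_2,\theta_3$ is a linear system of parameters is equivalent, by the criterion recalled above, to $\lambda$ being a characteristic function on $K_Q$, i.e.\ to properness of this $4$-colouring on the triangulated $2$-torus $\ca{P}_{\St_3}^*$ --- a finite check using Fig.~\ref{pictTorusAndDual}.

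Next I would compute $A^*=A^*(X_3;R)$ via Proposition~\ref{propCohomAlmPoly}(1): $A^{2j}$ is spanned by the monomials $v_I$, $|I|=j$, modulo the Minkowski type relations~\eqref{eqRelsSR} with $\sigma$ ranging over the cellular cycles of $\dd Q_3$ that vanish in $H_{n-j}(Q_3)$. As in the proof of Theorem~\ref{thmM3cohomology}, analysis of the moment-map image (Fig.~\ref{pictNecklace}) shows there is a single such basis cycle, and it lives in degree $j=2$; since this cycle depends only on the manifold with corners $Q_3$ and not on the characteristic data, it is the same cycle as in the chromatic case. Pairing it with the three elements of $(\Lambda^2\Zo^3)^*$ and substituting the new skew-forms $\lambda_I$ produces a $3$-dimensional space of degree-$4$ relations, which a short computation identifies with the span of the three listed elements. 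There are no relations in any other degree: $A^*$ is squeezed between the quotient $R[\ca{P}_{\St_3}^*]/\Theta/I_{NS}$, whose graded dimensions are $h''=(1,9,9,1)$, and $R[\ca{P}_{\St_3}^*]/\Theta$, whose graded dimensions are $h'=(1,9,15,1)$, and these agree except in $h$-index $2$. This gives the graded dimensions $(1,0,9,0,12,0,1)$ of $A^*$, and the quotient $H^*(X_3)/A^+$ together with the Betti numbers come out exactly as for $M_{\St_3,\lambda}$, since Proposition~\ref{propCohomAlmPoly}(2) only involves $H^*(Q_3)\cong H^*(S^1)$, $H^*(Q_3,\dd Q_3)$ and $H^*(T^3)$, none of which has changed.

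The only step where one cannot simply invoke Theorem~\ref{thmM3cohomology} --- and hence the main obstacle --- is the explicit computation of the generators of $\ca{I}$ for this \emph{non-chromatic} characteristic function, together with the torsion-freeness over $\Zo$. For the former I would carry out the pairings $\langle\omega,\lambda_I\rangle$ along the edges of $K_Q$ occurring in the vanishing $1$-cycle, now with some $\lambda(i)$ equal to $(-1,-1,-1)$; since the proposition only claims a noncanonical additive basis of the relation space, any three independent relations of the required shape will do. For torsion-freeness I would use Proposition~\ref{propMyRelations}(1) to exhibit an explicit $\Zo$-basis of $\Zo[\ca{P}_{\St_3}^*]/\Theta/\ca{I}$ and check that the relation map has free cokernel; the short exact sequence $0\to A^+\to H^*(X_3;\Zo)\to H^*(X_3;\Zo)/A^+\to 0$ then forces all of $H^*(X_3;\Zo)$ to be torsion free. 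Everything else is mechanical bookkeeping.
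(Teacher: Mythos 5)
Your proposal is correct and follows essentially the same route as the paper: the paper does not write a proof for this proposition at all, stating only that it is "completely similar to Theorem~\ref{thmM3cohomology}," and you have simply spelled out that similarity in detail — verifying the hypotheses of Propositions~\ref{propEquivCohomAlmPoly} and~\ref{propCohomAlmPoly}, substituting the non-chromatic characteristic function from Fig.~\ref{pictHexTorusTwin} into the linear system of parameters, reusing the same vanishing $1$-cycle in $\dd Q_3$ (which depends only on the manifold with corners, not on the characteristic data) to produce the three degree-$4$ Minkowski relations, and invoking Proposition~\ref{propMyRelations}(1) for torsion-freeness over $\Zo$. The bookkeeping with $h'=(1,9,15,1)$ versus $h''=(1,9,9,1)$ to locate all relations in $h$-index $2$ is a nice explicit justification of what the paper leaves implicit.
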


The second Pontryagin class of $X_3$ is given by the class
$p_1=\sum_{i=1}^{12}v_i^2\in A^*\subset H^*(X_3)$. It can be shown
that this class is nontrivial. Actually, the integral of this
class over any characteristic submanifold equals $\pm8$ (one needs
to introduce an omniorientation to specify the sign). This
calculation can be done by simplifying the expression $v_ip_1$,
with the use of the relations in the cohomology ring given by
Proposition \ref{propX3cohomology}.

\section*{Acknowledgements}

The authors are deeply grateful to Tadeusz Januszkiewicz, from
whom we knew about the general spaces of sparse isospectral
matrices and, in particular, about the space of isospectral arrow
matrices and its basic properties.

\end{document}